\documentclass[11pt]{amsart}
\pdfoutput=1
%\usepackage{amsmath,amsthm,amssymb,verbatim}
%\usepackage{tikz}
%\usepackage{fancyhdr}
%\usepackage{appendix}
%\setcounter{tocdepth}{3}
%\pagestyle{plain}
%\cfoot{\thepage}

%\documentclass[10pt]{article}

%\usepackage{gmig}
%\usepackage[margin=1in]{geometry}
%\usepackage{charter}
\usepackage{appendix}

% Uncomment following lines to see notes
%\usepackage{xcolor,soul}
%\definecolor{yorange}{rgb}{1,0.8,0}
%\sethlcolor{yorange}
%\newcommand\WriteupNote[1]{\footnote{\hl{#1}}}
\newcommand\WriteupNote[1]{}

\usepackage{comment}
\usepackage{caption}
\usepackage{subcaption}
\usepackage{graphicx}
\usepackage{enumerate,enumitem}
\usepackage{color}
\usepackage{mathtools}
\mathtoolsset{showonlyrefs}

\usepackage{fancyhdr}
\usepackage{epsfig,amsfonts,latexsym,amsmath,amssymb}
\headheight=8pt \topmargin=0pt
\textheight=624pt \textwidth=432pt
\oddsidemargin=18pt \evensidemargin=18pt

\usepackage{amsthm}

\theoremstyle{plain}
 \newtheorem{theorem}{Theorem}[section]
 \newtheorem{lemma}[theorem]{Lemma}
 \newtheorem{corollary}[theorem]{Corollary}
 \newtheorem{proposition}[theorem]{Proposition}
  \newtheorem{example}[theorem]{Example}

 \theoremstyle{definition}
 \newtheorem{definition}[theorem]{Definition}
 \theoremstyle{definition}
  \newtheorem{remark}[theorem]{Remark}

\newtheorem{conjecture}[theorem]{Conjecture}

\usepackage{hyperref}

%% PC
\numberwithin{equation}{section}

\DeclareMathOperator {\Id} {Id}

%\DeclareMathOperator {\grad} {grad}

%\DeclareMathOperator {\supp} {supp}

%Joonas' commands
%%%%%%%%%%%%%%%%%%%%%%%%%%%%%%%%%%%%%%%%%%%%%

\newcommand{\R}{\mathbb R}
\newcommand{\Q}{\mathbb Q}

\newcommand{\der}{\mathrm{d}}
\newcommand{\eps}{\varepsilon}
\renewcommand{\phi}{\varphi}
\newcommand{\abs}[1]{\left\lvert #1 \right\rvert}

\newcommand{\Der}[1]{\frac{\der}{\der #1}}
\DeclareMathOperator{\sgn}{sgn}

\newcommand{\lsp}{\operatorname{lsp}}
\newcommand{\spec}{\operatorname{spec}}

\newcommand{\Tr}{\operatorname{Tr}}

\newcommand{\pdpdo}[2]{\frac{{\mathrm{d}} #1}{{\mathrm{d}} #2}}

\newcommand{\dd}{\operatorname{d} \!}
\newcommand{\ii}{\operatorname{i}}
\newcommand{\RR}{\mathbb{R}}

\renewcommand{\Im}{\operatorname{Im}}
\renewcommand{\Re}{\operatorname{Re}}
\renewcommand{\det}{\operatorname{det}}

\newcommand{\mstrut}[1]{\mbox{\rule{0mm}{#1}}}
\newcommand{\N}{\mathbb{N}}

%%%%%%%%%%%%%%%%%%%%%%%%%%%%%%%%%%%%%%%%%%%%%%%%%%%%%%%%%%%%%%%%%%%%%%%%%
% array environment
%%%%%%%%%%%%%%%%%%%%%%%%%%%%%%%%%%%%%%%%%%%%%%%%%%%%%%%%%%%%%%%%%%%%%%%%%

\newcommand{\ba}{\begin{array}}
\newcommand{\ea}{\end{array}}

%%%%%%%%%%%%%%%%%%%%%%%%%%%%%%%%%%%%%%%%%%%%%%%%%%%%%%%%%%%%%%%%%%%%%%%%%

%\newcommand{\nl}{\newline}

%Commands for Guillemin/Melrose Portion

 % encloses the argument using stretchable square brackets
 % no delimiters
 % square brackets as delimiters

 % encloses the argument using stretchable absolute brackets
 % absolute brackets as delimiters

\def\beq{\begin{equation} } \def\eeq{\end{equation}}

\def\RR{\mathbb R}

\def\eps{\varepsilon}  

\def\ben{\begin{enumerate} }

\def\een{\end{enumerate} }

\def \R{ {\mathbb R}}
  
\def \p { \partial}

\def \Id{ \operatorname{Id} } %\def \sgn{ \text{sgn}}

\def \surf{ 1}  \def \rturn{ R^\star}
\def \CMB{ R} \def \wkb{\beta}

\title[Spectral rigidity for spherically symmetric manifolds with
  boundary]{Spectral rigidity for spherically symmetric manifolds
  with boundary}

\author{Maarten V. de Hoop}%~\footnote{Simons Chair in Computational and
    %Applied Mathematics and Earth Science, Rice University, Houston
    %TX, USA} \and Joonas Ilmavirta \and Vitaly Katsnelson}
\thanks{Simons Chair in Computational and Applied Mathematics and Earth Science, Rice University, Houston TX, USA. \texttt{mdehoop@rice.edu}}
\author{Joonas Ilmavirta}
\thanks{Department of Mathematics and Statistics, University of Jyv\"askyl\"a, Finland. \texttt{joonas.ilmavirta@jyu.fi}}
\author{Vitaly Katsnelson}
\thanks{Department of Computational and Applied Mathematics, Rice University, Houston TX, USA. \texttt{vitaly.katsnelson@rice.edu}}

\begin{document}

\begin{abstract}
We prove a trace formula for three-dimensional spherically symmetric
Riemannian manifolds with boundary which satisfy the Herglotz
condition: The wave trace is singular precisely at the length spectrum
of periodic broken rays. In particular, the Neumann spectrum of the
Laplace--Beltrami operator uniquely determines the length spectrum.
The trace formula also applies for the toroidal modes of the free
oscillations in the earth. We then prove that the length spectrum is
rigid: Deformations preserving the length spectrum and spherical
symmetry are necessarily trivial in any dimension, provided the
Herglotz condition and a generic geometrical condition are satisfied.
Combining the two results shows that the Neumann spectrum of the
Laplace--Beltrami operator is rigid in this class of manifolds with
boundary.
\end{abstract}

\subjclass[2010]{53C24, 58J50, 86A22}
%SOME POSSIBILITIES:
%35J05  	Laplacian operator, reduced wave equation (Helmholtz equation), Poisson equation [See also 31Axx, 31Bxx]
%35J15  	Second-order elliptic equations
%35P99  	Spectral theory and eigenvalue problems [See also 47Axx, 47Bxx, 47F05]
%35Q86  	PDEs in connection with geophysics
%35R30  	Inverse problems (in PDE)
%53A30  	Conformal differential geometry
%53C24  	Rigidity results (in global differential geometry)
%53C80  	Applications to physics (of global differential geometry)
%58J05  	Elliptic equations on manifolds, general theory [See also 35-XX]
%58J50  	Spectral problems; spectral geometry; scattering theory [See also 35Pxx]
%58Z05  	Applications to physics (of global analysis on manifolds)
%74L05  	Geophysical solid mechanics [See also 86-XX]
%86A17  	Global dynamics, earthquake problems
%86A22  	Inverse problems (in geophysics) [See also 35R30]

\maketitle

%\tableofcontents

\section{Introduction}
 We establish spectral
rigidity for spherically symmetric manifolds with boundary. We study
the recovery of a (radially symmetric Riemannian) metric or wave speed
rather than an obstacle. To our knowledge it is the first such result
pertaining to a manifold with boundary. We require the so-called
Herglotz condition while allowing an unsigned curvature; that is,
curvature can be everywhere positive or it can change sign, and we
allow for conjugate points. Spherically symmetric manifolds with
boundary are models for planets, the preliminary reference Earth model
(PREM) being the prime example. Specifically, restricting to toroidal
modes, our spectral rigidity result determines the shear wave speed of
Earth's mantle in the rigidity sense.

The method of proof relies on a trace formula, relating the spectrum
of the manifold with boundary to its length spectrum, and the
injectivity of the periodic broken ray transform. Specifically, our
manifold is the Euclidean annulus $M = \bar B(0,1) \setminus \bar
B(0,R) \subset \R^n$, $R>0$ and $n\geq2$, with the metric $g(x) =
c^{-2}(\abs{x}) e(x)$, where $e$ is the standard Euclidean metric and
$c \colon (R,1] \to (0,\infty)$ is a function satisfying suitable
  conditions. (In appendix~\ref{app:symmetry} we clarify that any
  spherically symmetric manifold is in fact of the form we consider --
  radially conformally Euclidean.) Our assumption is that the Herglotz
  condition $\Der{r}(r/c(r))>0$ is satisfied everywhere. This
  condition was first discovered by Herglotz~\cite{H:kinematic} and
  used by Wiechert and Zoeppritz~\cite{WZ:kinematic}.

By a maximal geodesic we mean a unit speed geodesic on the Riemannian
manifold $(M,g)$ with endpoints at the outer boundary $\partial
M\coloneqq\partial B(0,1)$. A broken ray or a billiard trajectory is a
concatenation of maximal geodesics satisfying the reflection condition
of geometrical optics at both inner and outer boundaries of $M$. If the initial and final points
of a broken ray coincide at the boundary, we call it a periodic broken ray -- in
general, we would have to require the reflection condition at the
endpoints as well, but in the assumed spherical symmetry it is
automatic. We will describe later (definition~\ref{def:ccc}) what will
be called the \emph{countable conjugacy condition} which ensures
that up to rotation only countably many maximal geodesics have conjugate endpoints.
This is a ``generic'' condition for reasonable wave speeds.

The length spectrum of a manifold $M$ with boundary is the set of
lengths of all periodic broken rays on $M$. If $M$ is a spherically
symmetric manifold as described above, we may choose whether or not we
include the rays that reflect on the inner boundary $r=R$. If the
radial sound speed is $c$, we denote the length spectrum without these
interior reflections by $\lsp(c)$ and the one with these reflections
by $\lsp'(c)$. See figure~\ref{fig:lsp} below for an illustration of these two
types of geodesics.
If the inner radius
is zero ($R=0$), the manifold is essentially a ball and the two kinds
of length spectra coincide.
We note that every broken ray is contained in a
unique two-dimensional plane in $\R^n$ due to symmetry
considerations. Therefore, it will suffice to consider the case $n=2$;
the results regarding geodesics and the length spectrum carry over to
higher dimensions.

\begin{figure}[ht]
\includegraphics[width=0.5\textwidth,scale = 0.8]{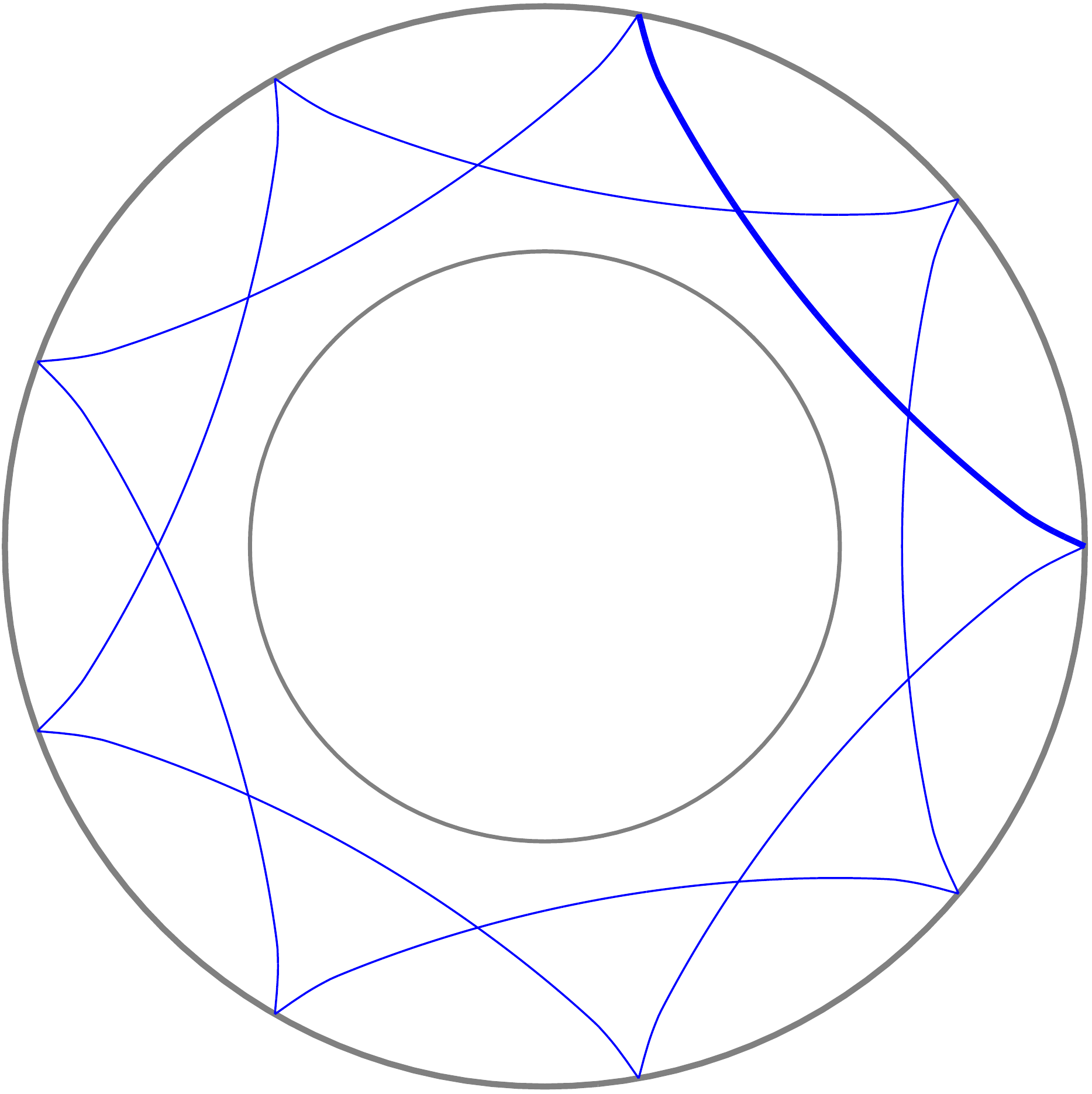}
\quad
\includegraphics[width=.5\textwidth, scale = 0.8]{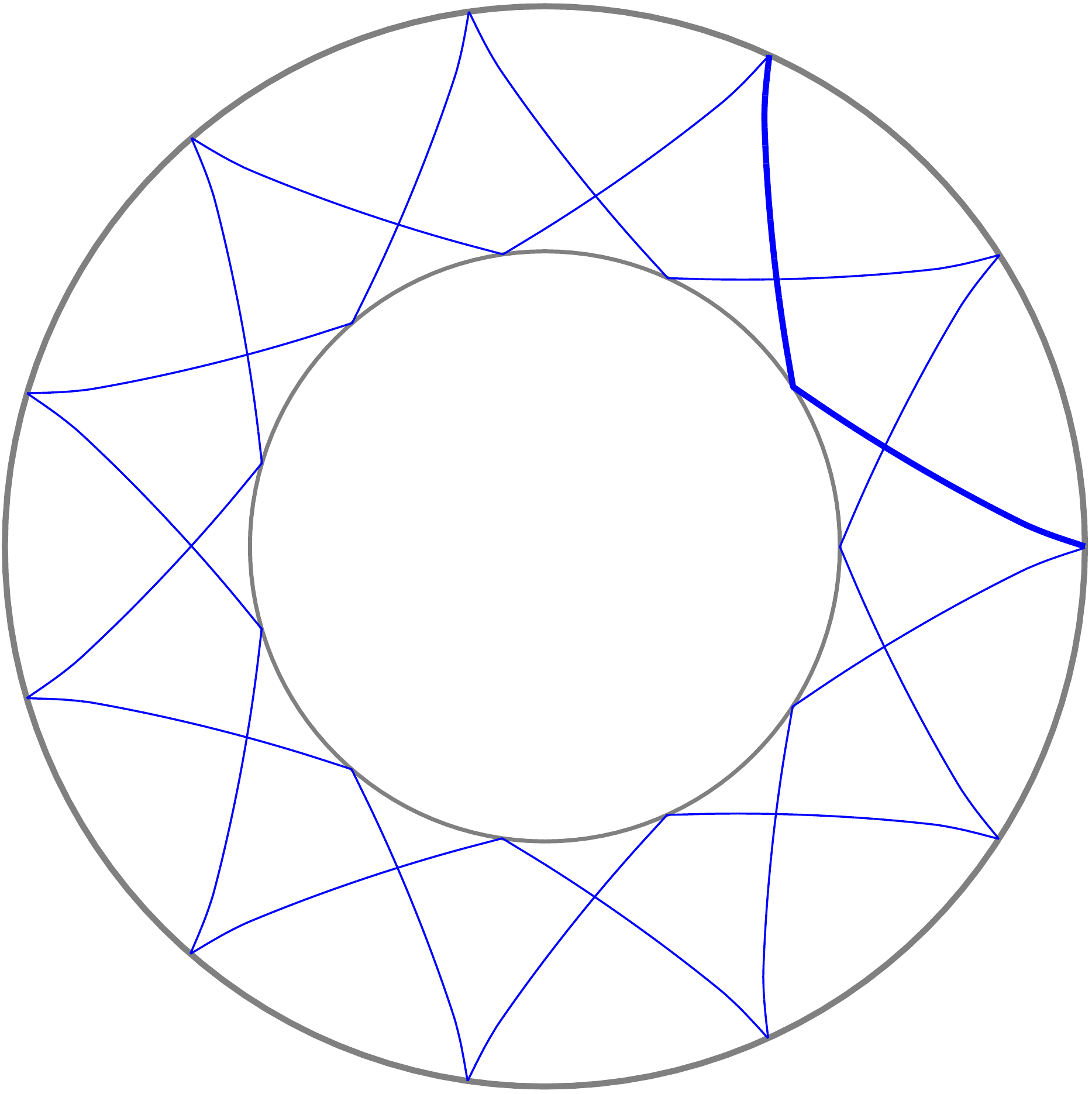}
\caption{The first figure shows a closed orbit whose period is in
  $\lsp(c)$ and $\lsp'(c)$. The second figure shows a closed orbit
  whose period is in $\lsp'(c)$, but not in $\lsp(c)$. }
\label{fig:lsp}
\end{figure}

We denote the Neumann spectrum of the Laplace--Beltrami operator in
three dimensions, $\Delta_c = c^3 \nabla \cdot c^{-1} \nabla$, on $M$
by $\spec(c)$, where we impose Neumann-type boundary
conditions on both the inner and outer boundary.
The spectrum $\spec(c)$ includes multiplicity, not just the set spectrum.

Some earlier results in tensor tomography the methods of which are
related to ours may be found in
\cite{Anasov14,Beurling15,Sharaf97,UhlSharaf}.
We are not aware of prior spectral rigidity results for manifolds with boundary.
Two of the main theorems we prove
on spectral rigidity are the following:

\begin{theorem} \label{thm:lsp-rig-intro}
Let $B=\bar B(0,1)\setminus \bar B(0,R)\subset\R^n$, $n\geq2$ and
$R\geq0$, be an annulus (or a ball if $R=0$).  Fix $\eps>0$ and let
$c_\tau$, $\tau\in(-\eps,\eps)$, be a $C^{1,1}$ function
$[R,1]\to(0,\infty)$ satisfying the Herglotz condition and the
countable conjugacy condition and depending $C^1$-smoothly on the
parameter $\tau$.  If $R=0$, we assume $c_\tau'(0)=0$.  If
$\lsp(c_\tau)=\lsp(c_0)$ for all $\tau\in(-\eps,\eps)$, then
$c_\tau=c_0$ for all $\tau\in(-\eps,\eps)$.

The result holds true also for the length spectrum $\lsp'(c)$
including reflections at the inner boundary, provided that $R>0$.
\end{theorem}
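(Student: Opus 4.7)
The strategy is standard for spectral rigidity: differentiate the length of each periodic broken ray with respect to the parameter $\tau$, turn the vanishing derivative into an integral identity, and invoke injectivity of the periodic broken ray transform to conclude that the generator $\dot c_\tau := \partial_\tau c_\tau$ vanishes identically. Once $\dot c_\tau \equiv 0$ for each $\tau \in (-\eps,\eps)$, integrating in $\tau$ gives $c_\tau = c_0$, which is the claim.

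Fix $\tau_0 \in (-\eps,\eps)$ and a periodic broken ray $\gamma_0$ for $c_{\tau_0}$. The countable conjugacy condition ensures that, outside a countable family, the maximal geodesic segments composing $\gamma_0$ have no conjugate endpoints, so the implicit function theorem produces a $C^1$ family $\gamma_\tau$ of periodic broken rays of $c_\tau$ near $\tau_0$. For such $\gamma_\tau$, parameterized by $g_\tau$-arclength, the standard first variation of length applies: interior-segment terms vanish because each $\gamma_\tau$ is a geodesic, reflection-point contributions vanish by the reflection law (Fermat/Snell), and endpoint contributions cancel because the ray is closed. With $g_\tau = c_\tau^{-2} e$ and $|\dot\gamma_0|_e = c_{\tau_0}$ along a $g_{\tau_0}$-unit-speed curve, this gives
\begin{equation}
\frac{d}{d\tau}\bigg|_{\tau=\tau_0} L_{g_\tau}(\gamma_\tau)
= \frac{1}{2}\int_{\gamma_0} \dot g_{\tau_0}(\dot\gamma_0,\dot\gamma_0)\,\dd s
= -\int_{\gamma_0} \frac{\dot c_{\tau_0}(|x|)}{c_{\tau_0}(|x|)}\,\dd s.
\end{equation}

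Next one argues that the set-level equality $\lsp(c_\tau)=\lsp(c_0)$, together with the $C^1$ dependence of individual orbit lengths on $\tau$, forces $L_{g_\tau}(\gamma_\tau)$ to be \emph{constant} in $\tau$ for each periodic orbit. This is a marking argument: the map $\tau \mapsto L_{g_\tau}(\gamma_\tau)$ is continuous and takes values in the fixed set $\lsp(c_0)$, and the Herglotz condition plus countable conjugacy guarantees that the periodic orbit spectrum is structured enough (countably many families of orbits, each with a definite length profile depending on an angular parameter) that a continuous selection must be constant locally. Consequently the derivative above vanishes, yielding
\begin{equation}
\int_{\gamma} f\,\dd s = 0 \quad\text{for every periodic broken ray } \gamma \text{ of } c_{\tau_0}, \qquad f(r) := \dot c_{\tau_0}(r)/c_{\tau_0}(r).
\end{equation}
Since $f$ is a radial function, this is precisely the periodic broken ray transform of $f$. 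Applying the injectivity of this transform (established elsewhere in the paper under the Herglotz and countable conjugacy assumptions) gives $f\equiv 0$, hence $\dot c_{\tau_0}\equiv 0$. As $\tau_0$ was arbitrary, $c_\tau \equiv c_0$. The case of $\lsp'(c)$ is identical, using the larger family of periodic broken rays that also reflect on $\partial B(0,R)$ and the corresponding injectivity statement.

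The main obstacle is the marking step, i.e.\ upgrading the set-level invariance of $\lsp(c_\tau)$ to the individual-orbit invariance needed for the variational identity. The first variation calculation and the appeal to injectivity are then essentially formal; the special hypothesis $c_\tau'(0)=0$ when $R=0$ is what lets one treat the origin as a regular reflection point (rays go through the center in the limit) without boundary pathology, and the countable conjugacy condition is what lets the implicit function theorem produce the needed smooth family $\gamma_\tau$ for a dense enough collection of initial periodic orbits.
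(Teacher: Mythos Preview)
Your proposal is correct and follows essentially the same route as the paper: produce a $C^1$ family of periodic broken rays via the implicit function theorem (where $\alpha'(r)\neq 0$), use the first variation of length, argue that the length is constant in $\tau$, and conclude by injectivity of the Abel-type transform on radial functions. Two points are worth sharpening against the paper's version.

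First, your ``marking argument'' is in fact just this: the countable conjugacy condition forces the set $P_\tau$ of periodic radii to be countable (an uncountable $P_\tau$ would force uncountably many zeros of $\alpha'$), hence $\lsp(c_0)$ is a countable subset of $\R$ with empty interior, and a continuous real-valued function taking values in such a set is constant. No further ``structure'' of the spectrum is needed. The paper states this as a separate lemma (countability of $\lsp$) rather than leaving it embedded in the marking step.

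Second, for $\lsp'(c)$ the paper does \emph{not} use the inner-reflecting rays in the injectivity step, as you suggest. Instead it observes (via a direct computation showing the relevant angle function for reflecting rays is strictly monotone in the angular momentum, with no conjugacy hypothesis needed) that the lengths of inner-reflecting periodic rays already form a countable set. Hence $\lsp'(c_0)=\lsp(c_0)\cup(\text{countable})$ is again countable, and the argument proceeds using only the diving rays exactly as in the $\lsp$ case. Your alternative---enlarging the ray family and invoking a corresponding injectivity statement---would also work, since more rays only help injectivity, but it is not how the paper argues.
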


\begin{theorem} \label{thm: intro spec rigidity}
Let $B=\bar B(0,1)\setminus \bar B(0,R)\subset\R^3$, $R\geq0$, be an
annulus (or a ball if $R=0$).  Fix $\eps>0$ and let $c_\tau$,
$\tau\in(-\eps,\eps)$, be a $C^\infty$ function $[R,1]\to(0,\infty)$
satisfying the Herglotz condition and the countable conjugacy
condition and depending $C^1$-smoothly on the parameter $\tau$.
Assume also that the length spectrum is non-degenerate in the sense that if two periodic broken rays have the same primitive period, then they differ by a rotation.
If
$R=0$, we assume that all odd order derivatives of $c_\tau$ vanish at
$0$.  If $\spec(c_\tau)=\spec(c_0)$ for all $\tau\in(-\eps,\eps)$,
then $c_\tau=c_0$ for all $\tau\in(-\eps,\eps)$.
\end{theorem}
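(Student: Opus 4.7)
The natural strategy is to reduce Theorem~\ref{thm: intro spec rigidity} to the already-stated length spectrum rigidity result, Theorem~\ref{thm:lsp-rig-intro}, via the trace formula announced in the abstract. The plan is three steps: (i) pass from equality of spectra to equality of wave traces as distributions; (ii) extract equality of length spectra by reading off singular supports; (iii) apply Theorem~\ref{thm:lsp-rig-intro} in dimension three to conclude triviality of the deformation.

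For step (i), define the wave trace
\begin{equation}
w_\tau(t) \coloneqq \sum_{j} \cos\bigl(t\sqrt{\lambda_j(\tau)}\bigr),
\end{equation}
where $\{\lambda_j(\tau)\}_j$ are the Neumann eigenvalues of $\Delta_{c_\tau}$ counted with multiplicity. This is a tempered distribution on $\R$ and is determined by $\spec(c_\tau)$ as a multiset. By hypothesis $\spec(c_\tau)=\spec(c_0)$ for all $\tau$, so $w_\tau=w_0$ as distributions; in particular, $\operatorname{sing\,supp}(w_\tau)$ is independent of $\tau$.

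For step (ii), I would invoke the trace formula the paper establishes: the singular support of $w_\tau$ equals $\{0\}\cup\pm\lsp'(c_\tau)$ when $R>0$ and $\{0\}\cup\pm\lsp(c_\tau)$ when $R=0$. The upper bound (inclusion) is the standard propagation-of-singularities half of a Duistermaat--Guillemin style trace formula adapted to a manifold with reflecting boundary; the equality, i.e.\ that no length is missed, is precisely where the non-degeneracy hypothesis enters. Distinct periodic broken rays of the same length could in principle contribute wave-trace singularities that cancel to infinite order; by assumption, the primitive-period level sets of the length functional contain only one orbit up to rotation, so the contribution associated to each element of the length spectrum is a non-vanishing Lagrangian distribution at a definite principal-symbol level, and no cancellation can occur. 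Thus $\lsp'(c_\tau)=\lsp'(c_0)$ (respectively $\lsp(c_\tau)=\lsp(c_0)$ if $R=0$) for every $\tau\in(-\eps,\eps)$.

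Step (iii) is then immediate: apply Theorem~\ref{thm:lsp-rig-intro} to conclude $c_\tau=c_0$ for all $\tau$. Taking the trace formula as a black box provided by the earlier part of the paper, the main obstacle in the present argument is the non-cancellation issue in step (ii); verifying that the non-degeneracy assumption really forces the leading-order symbols at each period to be non-zero (in particular that iterates and repeated reflections do not conspire to vanish) is where care is needed. The $C^\infty$ hypothesis on $c_\tau$, strengthening the $C^{1,1}$ regularity of Theorem~\ref{thm:lsp-rig-intro}, and the vanishing of all odd derivatives at the origin when $R=0$, are exactly what is needed for the trace formula to be valid and for the Laplace--Beltrami wave calculus to produce a bona fide Lagrangian distribution whose singularities can be analyzed on the nose.
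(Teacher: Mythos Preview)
Your three-step strategy matches the paper's proof exactly: wave trace from the spectrum, length spectrum from the singular support via the trace formula, then invoke length spectral rigidity (the paper uses the restatement as Theorem~\ref{thm:lsp-rig-reflecting} for $R>0$ and Theorem~\ref{thm:lsp-rig-diving} for $R=0$). So the architecture is right.

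There is, however, a subtlety in step~(ii) that you do not address and that the paper handles explicitly. The trace formula (Proposition~\ref{prop:Trace Formula}) asserts that the singular support equals the length spectrum only under the \emph{periodic conjugacy condition} (Definition~\ref{def:pcc}), which guarantees the clean intersection hypothesis for every periodic orbit. Theorem~\ref{thm: intro spec rigidity} assumes only the weaker \emph{countable conjugacy condition}. So you cannot simply read off $\lsp'(c_\tau)$ from $\operatorname{sing\,supp}(w_\tau)$: periods corresponding to radii $r$ with $\alpha'(r)=0$ (i.e.\ $r\in C\cap P$) may fail to produce the expected singularity, not because of cancellation between distinct orbits but because the stationary phase analysis degenerates. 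The paper's fix (see Remark~\ref{rmk:tr-pcc-nondeg} and the second paragraph of the proof in \S\ref{sec:spec-rig-proof}) is to observe that the singularities one \emph{can} certify correspond to $r\in P\setminus C$; non-degeneracy then ensures these lengths are disjoint from the uncertain ones, and the length spectral rigidity argument already uses only the stable radii $S_0=P\setminus C$. Your attribution of the potential loss of singularities solely to inter-orbit cancellation misses this second mechanism; once you patch it as above, the argument goes through.
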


Theorem~\ref{thm:lsp-rig-intro} is restated as
theorems~\ref{thm:lsp-rig-diving} (for $\lsp(c)$)
and~\ref{thm:lsp-rig-reflecting} (for $\lsp'(c)$).  We also prove an
analogous theorem for the union of various spectra of this kind
(theorem~\ref{thm:lsp-rig-combo}). We note that the dimension is
irrelevant for the length spectral rigidity results; if the sound
speed is fixed, the length spectrum is independent of dimension.

Using proposition~\ref{prop:rce}, we find the following corollaries:

\begin{corollary}
Let $B=\bar B(0,1)\setminus \bar B(0,R)\subset\R^n$, $n\geq2$ and
$R\geq0$, be an annulus (or a ball if $R=0$).  Fix $\eps>0$ and let
$g_\tau$, $\tau\in(-\eps,\eps)$, be a $C^{1,1}$-regular non-trapping
$SO(n)$-invariant Riemannian metric making the boundary strictly
convex and satisfying the countable conjugacy condition and depending
$C^1$-smoothly on the parameter $\tau$.  If $\lsp(g_\tau)=\lsp(g_0)$
for all $\tau\in(-\eps,\eps)$, then there are rotation equivariant
diffeomorphisms $\psi_\tau\colon B\to B$ so that $\psi^*_\tau
g_\tau=g_0$ for all $\tau\in(-\eps,\eps)$.
\end{corollary}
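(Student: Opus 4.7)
The plan is to reduce the corollary to Theorem~\ref{thm:lsp-rig-intro} by invoking proposition~\ref{prop:rce}. Since that proposition is the formal statement of the remark in the introduction that every spherically symmetric manifold is radially conformally Euclidean, we expect it to produce, for each $\tau\in(-\eps,\eps)$, a rotation-equivariant diffeomorphism $\phi_\tau\colon B\to B$ together with a radial function $c_\tau\colon[R,1]\to(0,\infty)$ such that $g_\tau=\phi_\tau^*(c_\tau^{-2}e)$, where $e$ is the Euclidean metric.

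The first step is to check that the family $c_\tau$ inherits the hypotheses of Theorem~\ref{thm:lsp-rig-intro}. The $C^{1,1}$ regularity of $g_\tau$ and the $C^1$ dependence on $\tau$ should carry over to $c_\tau$ from the construction of proposition~\ref{prop:rce}, which is expected to be a radial reparametrization and a conformal factor that depend continuously (resp.\ $C^1$-smoothly) on the metric. Non-trapping together with strict convexity of the boundary of the $SO(n)$-invariant metric should be equivalent, after passing to the conformal model, to the Herglotz condition $\frac{d}{dr}(r/c_\tau(r))>0$. The countable conjugacy condition is a diffeomorphism-invariant statement about maximal geodesics and transfers immediately, while in the $R=0$ case the smoothness of $g_\tau$ at the origin forces $c_\tau'(0)=0$.

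Next, isometries preserve the lengths of geodesics and of broken ray trajectories, so $\lsp(g_\tau)=\lsp(c_\tau)$ for every $\tau$, and the hypothesis $\lsp(g_\tau)=\lsp(g_0)$ becomes $\lsp(c_\tau)=\lsp(c_0)$ for all $\tau\in(-\eps,\eps)$. Theorem~\ref{thm:lsp-rig-intro} then yields $c_\tau\equiv c_0$. Setting $\psi_\tau\coloneqq\phi_\tau^{-1}\circ\phi_0$ produces a rotation-equivariant diffeomorphism of $B$ satisfying
\[
  \psi_\tau^*g_\tau
  =\phi_0^*\bigl((\phi_\tau^{-1})^*g_\tau\bigr)
  =\phi_0^*(c_\tau^{-2}e)
  =\phi_0^*(c_0^{-2}e)
  =g_0,
\]
which is the desired identity.

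The main obstacle in executing this plan is the bookkeeping inside proposition~\ref{prop:rce}: one must verify that the radial reparametrization it provides can be chosen to depend $C^1$-smoothly on $\tau$, so that the family $c_\tau$ satisfies the regularity assumption of Theorem~\ref{thm:lsp-rig-intro}, and that it is genuinely $SO(n)$-equivariant rather than merely a diffeomorphism, so that the resulting $\psi_\tau$ commutes with the rotation action. A secondary, more geometric point is converting the abstract hypotheses of non-trapping and strict boundary convexity into the Herglotz condition on $c_\tau$; this is likely to need a short argument with the radial geodesic equation of $c_\tau^{-2}e$ rather than being a formal consequence of the conformal representation.
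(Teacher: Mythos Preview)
Your proposal is correct and follows exactly the route the paper intends: the paper does not spell out a proof but simply presents the corollary as a consequence of proposition~\ref{prop:rce} combined with theorem~\ref{thm:lsp-rig-intro}, and your argument is precisely the unpacking of that deduction. The technical caveats you flag (smooth dependence of the conformal factor and reparametrization on $\tau$, rotation-equivariance of the diffeomorphism, and the equivalence of non-trapping plus strict convexity with the Herglotz condition) are indeed swept under the rug in the paper; the equivariance point is addressed by the remark following proposition~\ref{prop:rce}, and the explicit formula~\eqref{eq:rho-set} for $\rho$ makes the $\tau$-dependence tractable.
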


In dimension $n \geq 3$ all rotation equivariant diffeomorphisms
(diffeomorphisms commuting with the action of $SO(n)$) are radial. In
dimension $n=2$ the diffeomorphisms are also radial if the metrics are
assumed to be $O(2)$-invariant.

\begin{corollary}
Let $B=\bar B(0,1)\setminus \bar B(0,R)\subset\R^3$, and $R\geq0$, be
an annulus (or a ball if $R=0$).  Fix $\eps>0$ and let $g_\tau$,
$\tau\in(-\eps,\eps)$, be a $C^{\infty}$-regular non-trapping rotation
invariant Riemannian metric making the boundary strictly convex and
satisfying the countable conjugacy condition and depending
$C^1$-smoothly on the parameter $\tau$.
Assume also that the length spectrum is non-degenerate up to rotations.
If $\spec(g_\tau)=\spec(g_0)$
for all $\tau\in(-\eps,\eps)$, then there are radial diffeomorphisms
$\psi_\tau\colon B\to B$ so that $\psi^*_\tau g_\tau=g_0$ for all
$\tau\in(-\eps,\eps)$.
\end{corollary}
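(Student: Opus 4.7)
The plan is to reduce the claim to Theorem~\ref{thm: intro spec rigidity} by passing each rotation-invariant metric $g_\tau$ to its radially conformally Euclidean representative. By proposition~\ref{prop:rce}, for every $\tau$ there exists a rotation-equivariant diffeomorphism $\phi_\tau\colon B\to B$ and a function $c_\tau\colon[R,1]\to(0,\infty)$ with $\phi_\tau^* g_\tau = c_\tau^{-2}(|x|)\, e(x)$. Setting $\psi_\tau \coloneqq \phi_\tau \circ \phi_0^{-1}$, one computes $\psi_\tau^* g_\tau = (\phi_0^{-1})^*\phi_\tau^* g_\tau$, so that proving $c_\tau = c_0$ immediately yields $\psi_\tau^* g_\tau = (\phi_0^{-1})^* \phi_0^* g_0 = g_0$, which is the conclusion. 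In dimension three, rotation equivariance forces $\psi_\tau$ to be radial, so it suffices to show $c_\tau=c_0$.

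I would then verify that the family $\{c_\tau\}$ satisfies every hypothesis of Theorem~\ref{thm: intro spec rigidity}. The Laplace--Beltrami spectrum is a diffeomorphism invariant, hence $\spec(c_\tau)=\spec(\phi_\tau^* g_\tau)=\spec(g_\tau)=\spec(g_0)=\spec(c_0)$. Non-trapping together with strict convexity of $\partial B$ should yield the Herglotz condition for $c_\tau$: in the radially conformally Euclidean form the geodesics are governed by an ODE whose turning-point analysis shows that no radial geodesic is trapped and the boundary is convex precisely when $\Der{r}(r/c_\tau(r))>0$ throughout $[R,1]$. The countable conjugacy condition and the length-spectral non-degeneracy up to rotations are purely geometric and are transported along $\phi_\tau$. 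Smoothness of $c_\tau$ in $r$ follows from $C^\infty$ regularity of $g_\tau$, and if $R=0$ the vanishing of the odd-order derivatives of $c_\tau$ at the origin is forced by the observation that a $C^\infty$ rotation-invariant function on $\R^3$ has vanishing odd derivatives at $0$.

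Once these hypotheses are in place, Theorem~\ref{thm: intro spec rigidity} yields $c_\tau=c_0$ for every $\tau\in(-\eps,\eps)$, which closes the argument. The principal obstacle is ensuring that the diffeomorphism $\phi_\tau$ supplied by proposition~\ref{prop:rce} can be chosen to depend $C^1$-smoothly on $\tau$, so that the induced family $c_\tau$ is a $C^1$ family in $\tau$; without this, Theorem~\ref{thm: intro spec rigidity} does not apply. I expect this reduces to an ODE-level construction in the appendix: the conformal factor is essentially determined by a radial integral of metric coefficients of $g_\tau$, which inherits $C^1$ dependence on $\tau$ from the given family. A secondary, more delicate point is confirming that the Herglotz condition is genuinely equivalent to non-trapping plus strict convexity in the radially conformally Euclidean form (rather than merely implied in one direction); this equivalence is used implicitly throughout the paper, but for the corollary one needs the direction from non-trapping to Herglotz, which should be verified by a direct computation with the radial geodesic equation.
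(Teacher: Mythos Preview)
Your proposal is correct and follows essentially the same approach as the paper, which gives no detailed proof beyond the sentence ``Using proposition~\ref{prop:rce}, we find the following corollaries.'' You have in fact fleshed out more of the argument than the paper does, and the two technical concerns you flag (the $C^1$ dependence of $\phi_\tau$ on $\tau$, and the passage from non-trapping plus strict convexity to the Herglotz condition) are legitimate but resolvable: the diffeomorphism in proposition~\ref{prop:rce} is built from explicit integrals of the metric coefficients (see~\eqref{eq:rho-set}), so it inherits the required regularity in $\tau$, and the Herglotz condition is exactly what makes the radial turning-point analysis consistent with non-trapping and convexity in the conformally Euclidean model.
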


\begin{remark}
While our main results are stated for the Laplace--Beltrami operator,
they are equally valid for the spectra associated to the toroidal
modes (see~\cite[chapter 8.6]{DahlTromp} for a precise definition) of an elastic operator. In section~\ref{sec: efunction and
  quant}, we point out that the length spectrum may be recovered from
either the spectrum of the Laplace--Beltrami operator or the toroidal
mode eigenfrequencies so the results above hold when one has the
spectrum associated to toroidal modes. In fact, our trace formula
(proposition~\ref{prop:Trace Formula}) to recover the length spectrum
from the spectrum is done for the more general toroidal modes and
frequencies. Nevertheless, we show how our proof and result holds for
the Laplace--Beltrami operator as well. We note that the spectrum of
the Laplace--Beltrami operator depends on dimension.
\end{remark}

The proofs of these theorems require three key ingredients which we
elaborate in the next subsection:
\begin{enumerate}
\item[$\bullet$] A trace formula relating the Neumann spectrum of the
  Laplacian to the length spectrum. Thus, we will only need to prove
  length spectral rigidity since spectral rigidity (theorem~\ref{thm:
    intro spec rigidity}) follows from the trace formula.
\item[$\bullet$] Sufficiently many periodic broken rays are stable
  under geometry-preserving perturbations of the metric and the
  derivative of the length of such broken rays is the periodic broken
  ray transform of the variation of the metric.
\item[$\bullet$] The periodic broken ray transform uniquely determines
  a radial function. %Here, we exclude radial geodesics.
\end{enumerate}

\subsection*{Outline of the proof}

The breakup of the paper is as follows. In section~\ref{sec: efunction
  and quant}, we discuss the relevant partial differential operators
and their eigenfunctions. We also discuss geodesics in spherical
symmetry and state the trace formula that we will prove (see
proposition~\ref{prop:Trace Formula}). Section~\ref{sec: trace
  formula} will be devoted to a proof of the trace formula. It will be
necessary to use several transforms and the Debye expansion (see
Appendix~\ref{sec:Debye exp}) to convert the Green's function for the
wave propagator written in terms of eigenfunctions to the dynamical
Fourier integral operator (FIO) representation analogous to the one
in \cite{DG75} and~\cite{Mel79}. Along the way, the connection from
eigenfunction to geodesics becomes rather explicit. To reinforce this
point, in section~\ref{sec: Melrose/guill construction} we revisit the
wave propagator constructed in~\cite{Mel79} and show how all our
explicit calculations relate to the abstract, geometric construction
in that paper. Finally, a standard application of the method of
steepest descent and stationary phase provides the leading order
asymptotics for the trace.

After proving the trace formula, we prove the rigidity of the length
spectrum in section~\ref{sec: spectral rigidity proof}.  Together with
the trace formula this implies the rigidity of the Neumann spectrum of the
Laplace--Beltrami operator.  We have a family of radially symmetric
wave speeds $c_\tau$ parameterized by $\tau\in(-\eps,\eps)$.  For any
periodic broken ray that is locally stable under the family of
deformations, the derivative of its length is the integral of the
metric variation over the periodic broken ray.  In the case of closed
manifolds and periodic geodesics this is well known, and in the case
of non-periodic broken rays this was observed
in~\cite{IS:brt-pde-1obst}.  Since the length spectrum is independent
of the parameter $\tau$, these derivatives vanish.  The countable
conjugacy condition (definition~\ref{def:ccc}) guarantees that
sufficiently many periodic broken rays are stable, so that we may
conclude that the periodic broken ray transform of the function
$\left.\Der{\tau}c_\tau^{-2}\right|_{\tau=0}$ vanishes.  It then
follows from recent results of periodic broken ray tomography on
spherically symmetric manifolds~\cite{dHI:radial-xrt} that the
function in question has to vanish.  Since the function is radial,
this can be seen as an injectivity result for an Abel-type integral
transform.  Consequently $c_\tau$ is independent of $\tau$ and the
rigidity of the length spectrum and thus the spectrum follows.

\subsection*{Acknowledgements}

M.V.d.H.\ gratefully acknowledges support from the Simons Foundation
under the MATH + X program and the National Science Foundation under
grant DMS-1559587. J.I.\ was supported by the Academy of Finland
(decision 295853), and he is grateful for hospitality and
support offered by Rice University during visits. V.K.\ thanks the
Simons Foundation for financial support.
We would also like to thank Gunther Uhlmann for helpful discussions and providing us useful references.

\section{Geodesics and Eigenfunctions} \label{sec: efunction and
quant}

In this section, we describe the relevant partial differential
operators, associated eigenfunctions, and the connection between
toroidal modes and the spectrum of the Laplace--Beltrami operator. We
state our trace formula (proposition~\ref{prop:Trace Formula}) along
with an important remark related to the Laplace--Beltrami operator
described in the introduction. First, let us provide a preliminary
discussion of geodesics in spherically symmetric manifolds.

\subsection{Geodesics in a spherically symmetric model}

For the moment, we suppose that $n=2$ and equip the annulus $M=\bar
B(0,1)\setminus\bar B(0,R)$ with spherical coordinates. For a maximal
geodesic we define its radius as its Euclidean distance to the origin.
We let $\gamma(r)$ be the maximal geodesic of radius $r$ which has its
tip (the closest point to the origin) at the angular position
$\theta=0$.

For $r_0\in(R,1)$, the geodesic $\gamma(r_0)$ can be parametrized as
\begin{equation}
%\label{eq:}
[-L(r_0),L(r_0)]\ni t\mapsto (r(t),\theta(t))
\end{equation}
so that $r(0)=r_0$ and $\theta(0)=0$.
Here $L(r_0)>0$ is the half length of the geodesic.
Using the conserved quantities $c(r(t))^{-2}[r'(t)^2+r(t)^2\theta'(t)^2]=1$
(squared speed) and $p = p_{\gamma} \coloneqq c(r(t))^{-2}r(t)^2\theta'(t)=r_0/c(r_0)$ (angular momentum)
one can find the functions $r(t)$ and $\theta(t)$ explicitly.

Using these conserved quantities it is straightforward to show that
\begin{equation}
\label{eq:L}
L(r)
%=
%\int_r^1\left(\frac{\der r}{\der t}\right)^{-1}\der r
=
\int_r^1\frac{1}{c(r')}\left(1-\left(\frac{rc(r')}{r'c(r)}\right)^2\right)^{-1/2}\der
r' = \int_r^1\frac{1}{c(r')^2\wkb(r';p)} \der r',
\end{equation}
where $\wkb(r;p) \coloneqq \sqrt{c^{-2}(r) - r^{-2}p^2}$. We introduce the quantity $\wkb$ because it will appear naturally in the asymptotic approximations of eigenfunctions in section~\ref{sec: trace formula} and its relation to geodesics will now be clear.

We denote $\alpha(r)\coloneqq\theta(L(r))$, where $\theta$ is the angular
coordinate (taking values in~$\R$) of the geodesic $\gamma(r)$.
That is, $2\alpha(r)$ is the angular distance of the endpoints of $\gamma(r)$.
It may happen that $\alpha(r)>\pi$ if the geodesic winds around the origin several
times.
Using the invariants given above, one can also find an explicit formula for
$\alpha(r)$:
\begin{equation}
\label{eq:alpha}
\alpha(r)
%=
%\int_r^1\frac{rc(s)}{c(r)s^2}\left(1-\left(\frac{rc(s)}{sc(r)}\right)^2\right)^{-1/2}\der s
=
\int_r^1\frac{rc(r')}{c(r)(r')^2}\left(1-\left(\frac{rc(r')}{r'c(r)}\right)^2\right)^{-1/2}\der r'
=
\int_r^1\frac{p}{(r')^2\wkb(r';p)} \der r'
.
\end{equation}

We will use the following lemma without mention whenever we need
regularity of these functions.

\begin{lemma} \label{lma:alpha-L-regularity}
When $c$ is $C^{1,1}$ and satisfies the Herglotz condition, then the
functions $\alpha$ and $L$ are $C^{1}$ on $(R,1)$.
\end{lemma}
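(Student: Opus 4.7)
The plan is to eliminate the integrable square-root singularity of $\wkb$ at $r'=r$ by a change of variables, after which $L$ and $\alpha$ become integrals of continuous integrands with smoothly varying limits, and differentiation under the integral yields continuous derivatives. The Herglotz condition is precisely what makes the substitution well behaved: setting $q(r)\coloneqq(r/c(r))^2$, the condition reads $q'>0$, so $q\colon[R,1]\to[q(R),q(1)]$ is a $C^{1,1}$-diffeomorphism with $q^{-1}\in C^{1,1}$, and the identity $r'^2\wkb(r';p(r))^2 = q(r')-q(r)$ exhibits the singularity explicitly.

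Next, substituting $\sigma\coloneqq\sqrt{q(r')-q(r)}$ (so $r' = q^{-1}(\sigma^2+q(r))$) converts the integrals \eqref{eq:L} and \eqref{eq:alpha} into
\begin{equation}
L(r) = \int_0^{\sigma_1(r)} \frac{2\,r'(\sigma,r)}{c^2(r'(\sigma,r))\,q'(r'(\sigma,r))}\,d\sigma, \qquad
\alpha(r) = \int_0^{\sigma_1(r)} \frac{2\,p(r)}{r'(\sigma,r)\,q'(r'(\sigma,r))}\,d\sigma,
\end{equation}
with $\sigma_1(r) = \sqrt{q(1)-q(r)}$. Both integrands are now bounded and continuous in $(\sigma,r)$, the dependence on $r$ enters through $r'(\sigma,r)$ with $\partial_r r' = q'(r)/q'(r')$ continuous, and the upper limit satisfies $\sigma_1'(r) = -q'(r)/(2\sigma_1(r))$, which is continuous on $(R,1)$ but blows up as $r\to 1^-$ — this is exactly why the conclusion is only asserted on the open interval. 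Applying the Leibniz rule then produces explicit formulas for $L'(r)$ and $\alpha'(r)$, and I would verify that each is continuous on $(R,1)$.

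The main obstacle is justifying the interior differentiation rigorously: differentiating the integrand in $r$ naively requires $\partial_{r'}(1/q'(r'))$, but $q'$ is only Lipschitz (not $C^1$) under $c\in C^{1,1}$, so this pointwise derivative is defined only almost everywhere. To handle this I would recast $L$ as a classical Abel transform by the further substitution $s=\sigma^2+q(r)$,
\begin{equation}
L(r) = \int_{q(r)}^{q(1)}\frac{G(s)}{\sqrt{s-q(r)}}\,ds, \qquad G(s) = \frac{q^{-1}(s)}{c^2(q^{-1}(s))\,q'(q^{-1}(s))},
\end{equation}
and analogously for $\alpha$, and then invoke the standard fact that the Abel transform of a Lipschitz function is $C^1$ (proved by one integration by parts against $d\sqrt{s-q(r)}$, moving the derivative off of $G$, together with dominated convergence on the resulting bounded-variation integrand). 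This yields $L,\alpha\in C^1(R,1)$.
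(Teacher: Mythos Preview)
Your approach is correct and considerably more self-contained than the paper's, which simply cites formulas and a proposition from~\cite{dHI:radial-xrt} without reproducing any argument. The substitution $q(r)=(r/c(r))^2$ to expose the Abel structure is the right move, and you correctly identify the regularity bottleneck: $q'$ is only Lipschitz under $c\in C^{1,1}$, so naive differentiation of $1/q'$ under the integral fails. Your remedy---integration by parts against $d\sqrt{s-u}$ to shift the derivative onto $\sqrt{s-u}$, followed by dominated convergence, with continuity of the resulting weakly singular integral $\int_u^b G'(s)/\sqrt{s-u}\,ds$ of an $L^\infty$ density following from $L^1$-continuity of translation (it is locally a convolution of $L^\infty$ against $L^1$)---is sound and yields the explicit formula $F'(u)=-G(q(1))/\sqrt{q(1)-u}+\int_u^{q(1)}G'(s)/\sqrt{s-u}\,ds$, manifestly continuous for $u<q(1)$. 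One point worth making explicit: for $\alpha$ the integrand carries the extra factor $p(r)=\sqrt{q(r)}$ in the outer variable, but this simply factors as $\alpha(r)=p(r)\,\tilde F(q(r))$ with $\tilde F$ an Abel transform of another Lipschitz function, so $C^1$ follows by the product and chain rules. The external reference the paper cites almost certainly carries out essentially the same computation; your argument has the advantage of being laid out here.
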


\begin{proof}
This follows from equations (35), (67), and (68) and proposition~15
in~\cite{dHI:radial-xrt}.
\end{proof}

\subsection{Toroidal modes, eigenfrequencies, and traces}

We now use spherical coordinates $(r,\theta,\psi)$. Toroidal modes are
precisely the eigenfunctions of the isotropic elastic operator that
are sensitive to only the shear wave speed. We forgo writing down the
full elastic equation, and merely write down these special
eigenfunctions connected to the shear wave speed (full details with
the elastic operator may be found in~\cite[chapter 8.6]{DahlTromp}). Analytically, these eigenfunctions admit a separation in
radial functions and real-valued spherical harmonics, that is,
\begin{equation}
   u = {}_n\mathbf{D}_l Y^m_l ,
\end{equation}
where
\begin{equation}
   \mathbf{D} = U(r)\ (- k^{-1})
      [-\widehat{\theta} (\sin \theta)^{-1} \partial_{\psi}
                + \widehat{\psi} \partial_{\theta}] ,
\end{equation}
in which $k = \sqrt{l (l + 1)}$ (instead of the asymptotic Jeans
relation, $k = l + \tfrac{1}{2}$) and $U$ represents a radial function
(${}_n U_l$). In the further analysis, we ignore the curl (which
signifies a polarization); that is, we think of ${}_n\mathbf{D}_l$ as
the multiplication with ${}_n U_l(-k^{-1})$. In the above, $Y^m_l$ are
spherical harmonics, defined by
\[
   Y^m_l(\theta,\psi) = \left\{ \ba{rcl}
   \sqrt{2} X^{\abs{m}}_l(\theta) \cos(m \psi) & \text{if}
                             & -l \le m < 0 ,
\\
   X^0_l(\theta) & \text{if} & m = 0 ,
\\
   \sqrt{2} X^m_l(\theta) \sin(m \psi) & \text{if} & 0 <  m \le l ,
   \ea\right.
\]
where
\[
   X^m_l(\theta) = (-)^m \sqrt{\frac{2l + 1}{4\pi}}
               \sqrt{\frac{(l-m)!}{(l+m)!}} P^m_l(\cos\theta) ,
\]
in which
\[
   P^m_l(\cos(\theta)) = (-)^m \frac{1}{2^l l!} (\sin\theta)^m
     \left( \frac{1}{\sin\theta} \frac{\mathrm{d}}{\mathrm{d}\theta}
     \right)^{l+m} (\sin\theta)^{2l} .
\]
The function, $U$ (a component of displacement), satisfies the
equation
\begin{equation}\label{eq: equation for U_2}
   [-r^{-2} \partial_r\ r^2 \mu \partial_r
       + r^{-2} \partial_r\ \mu r - r^{-1} \mu \partial_r
         + r^{-2} (-1 + k^2) \mu] \, U - \omega^2 \rho U = 0 ,
\end{equation}
where $\mu = \mu(r)$ is a Lam\'{e} parameter and $\rho = \rho(r)$ is
the density, both of which are smooth. Also, $\omega = {}_n\omega_l$
denotes the associated eigenvalue. Here, $l$ is referred to as the
angular order and $m$ as the azimuthal order.

The traction is given by
\begin{equation}\label{eq: Neumann condition for U_2}
   T(U) = \mathcal{N} U ,\qquad
   \mathcal{N} = \mu \p_r - r^{-1}\mu
\end{equation}
which vanishes at the boundaries (Neumann condition). The radial
equations do not depend on $m$ and, hence, every eigenfrequency is
degenerate with an associated $(2l + 1)$-dimensional eigenspace
spanned by
\[
   \{ Y^{-l}_l,\ldots,Y^l_l \} .
\]

We use spherical coordinates $(r_0,\theta_0,\psi_0)$ for the location,
$x_0$, of a source, and introduce the shorthand notation
$({}_n\mathbf{D}_l)_0$ for the operator expressed in coordinates
$(r_0,\theta_0,\psi_0)$. We now write the (toroidal contributions to
the) fundamental solution as a normal mode summation
\begin{equation}\label{normal-mode-summation}
   G(x,x_0,t) = \operatorname{Re}\
          \sum_{l=0}^{\infty} \sum_{n=0}^{\infty}\
          {}_n\mathbf{D}_l ({}_n\mathbf{D}_l)_0\
   \sum_{m=-l}^l Y^m_l(\theta,\psi) Y^m_l(\theta_0,\psi_0)\
        \frac{e^{\ii {}_n\omega_l t}}{\ii {}_n\omega_l} .
\end{equation}
On the diagonal, $(r,\theta,\psi) = (r_0,\theta_0,\psi_0)$ and, hence,
$\Theta = 0$.
Here $\Theta$ is the angular epicentral distance, cf.~\eqref{eq: eq for big Theta}.
We observe the following reductions in the evaluation of
the trace of~\eqref{normal-mode-summation}:
%\\[-0.2cm]
\begin{itemize}
\item
The functions $U(r)$ are normalized, so that
\begin{equation} \label{eq:U2norm}
   \int_{\CMB}^{\surf}
        U(r)^2 \rho(r) r^2 \dd r = 1 .
\end{equation}
Meanwhile, the spherical harmonic terms satisfy
\begin{equation} \label{eq:Ylmnorm}
   \sum_{m=-l}^l \iint
      Y^m_l(\theta,\psi)^2 \sin \theta \dd\theta \dd\psi
           = 2l + 1
\end{equation}
(counting the degeneracies of eigenfrequencies).
\item
If we were to include the curl in our analyis (generating vector
spherical harmonics), taking the trace of the matrix on the diagonal
yields
\begin{equation} \label{eq:GYlmnorm}
   \sum_{m=-l}^l \iint
   (-k^{-2})
      \abs{[-\widehat{\theta} (\sin \theta)^{-1} \partial_{\psi}
                + \widehat{\psi} \partial_{\theta}]
       Y^m_l(\theta,\psi)}^2 \sin \theta \dd\theta \dd\psi
           = 2l + 1 .
\end{equation}
\end{itemize}

\noindent
From the reductions above, we obtain
\begin{equation}
   \int_M
      G(x,x,t) \, \rho(x) \dd x
      = \sum_{l=0}^{\infty} \sum_{n=0}^{\infty}\
        (2l + 1)
        \operatorname{Re} \left\{
        \frac{e^{\ii {}_n\omega_l t}}{
                               \ii {}_n\omega_l} \right\}
\end{equation}
or
\begin{equation} \label{eq:TrptG}
  \Tr(\p_t G)(t) = \int_M
      \partial_t G(x,x,t) \, \rho(x) \dd x
      = \sum_{l=0}^{\infty} \sum_{n=0}^{\infty}\
        (2l + 1)
        \operatorname{Re} \left\{
        e^{\ii {}_n\omega_l t} \right\} .
\end{equation}
We now write
\begin{equation*}
   {}_n f_l(t) = \operatorname{Re} \left\{
       \frac{e^{\ii {}_n\omega_l t}}{
                               \ii {}_n\omega_l} \right\}
\end{equation*}
which is the inverse Fourier transform\WriteupNote{There was a missing
  factor of $\frac12$ in ${}_n\hat{f}_l(\omega)$.} of
\begin{equation}
   {}_n \hat{f}_l(\omega) = \frac{1}{2\ii {}_n\omega_l}
      \left[ \mstrut{0.5cm} \right.
      \pi \delta(\omega - {}_n\omega_l)
           - \pi \delta(\omega + {}_n\omega_l)
                              \left. \mstrut{0.5cm} \right] .
\end{equation}
Moreover, taking the Laplace--Fourier transform yields
\begin{equation} \label{eq:nflom}
   \int_0^{\infty} {}_n f_l(t) e^{-\ii \omega t} \, \dd t
   = \frac{1}{2\ii {}_n\omega_l}
    \left[ \mstrut{0.5cm} \right.
    \frac{\ii}{-(\omega - {}_n\omega_l) + \ii 0}
    - \frac{\ii}{-(\omega + {}_n\omega_l) + \ii 0}
            \left. \mstrut{0.5cm} \right] .
\end{equation}
This confirms that the trace is equal to the inverse Fourier transform
of
\[
   \sum_{l=0}^{\infty} (2l + 1) \sum_{n=0}^{\infty}
   \frac{1}{2\ii {}_n\omega_l}
      \left[ \mstrut{0.5cm} \right.
      \pi \delta(\omega - {}_n\omega_l)
           - \pi \delta(\omega + {}_n\omega_l)
                              \left. \mstrut{0.5cm} \right] .
\]

In the next subsection, we explain how the toroidal eigenfrequencies
$\{{}_n\omega_l\}_{n,l}$ relate to the Neumann spectrum of the
Laplace--Beltrami operator described in the introduction. We also show
why all our results and proofs in connection to the trace formula
(proposition~\ref{prop:Trace Formula}) hold for this spectrum as well.

\subsection{Connection between toroidal eigenfrequencies, spectrum
        of the Laplace--Beltrami operator, and the Schr\"{o}dinger
        equation} \label{sec: connect to LB}

We relate the spectrum of a scalar Laplacian, the eigenvalues
associated to the vector valued toroidal modes, and the trace
distribution $\sum_{l=0}^{\infty} \sum_{n=0}^{\infty}\ (2l+1)
\cos(t{}_n\omega_l)$.

We note that \eqref{eq: equation for U_2} and \eqref{eq: Neumann
  condition for U_2} for $U$ ensure that $v = U Y^m_l$ satisfies
\begin{equation} \label{eq: scalar P}
   P v \coloneqq \rho^{-1}
       (-\nabla \cdot \mu \nabla + P_0) v = \omega^2 v ,\qquad
   \mathcal{N} v = 0 \text{ on }\p M
\end{equation}
where $P_0 = r^{-1}(\p_r\mu)$ is a $0$th order operator and $\omega^2$
is a particular eigenvalue. Hence $U Y^m_l$ are scalar eigenfunctions
for the self-adjoint (with respect to the measure $\rho\dd x$) scalar
operator $P$ with Neumann boundary conditions (on both boundaries)
expressed in terms of $\mathcal{N}$.

The above argument shows that we may view the toroidal spectrum
$\{{}_n\omega^2_l\}_{n,l}$ as also the collection of eigenvalues
$\lambda$ for the boundary problem on scalar functions (\ref{eq:
  scalar P}). Thus (\ref{eq:TrptG}) can be written in the form
\begin{equation}\label{eq: toroidal trace equal laplace trace}
   \Tr \, (\p_t G)
          = \sum_{\lambda \in \spec(P)} \cos(t \sqrt{\lambda}) ,
\end{equation}
where the last sum is taken with multiplicities for the
eigenvalues. (While $G$ is a vector valued distribution, the
asymptotic trace formula we obtain is for $\Tr (\p_t G)$, which is equal
to $\sum_{\lambda \in \spec(P)} \cos(t \sqrt{\lambda})$ by the
normalizations we have chosen.) Up to principal symbols, $P$ coincides
with the $\Delta_c = c^3 \nabla \cdot c^{-1} \nabla$ upon identifying
$c^2$ with $\rho^{-1} \mu$. This means that the length spectra of $P$
and $\Delta_c$ will be the same even though they have differing subprincipal symbols and spectra. Thus, the trace formula which will
appear to have a unified form, connects two different spectra to a
common length spectrum and the proof is identical for both.

For concreteness, we recall~\cite[theorem 1]{Mel79}. Suppose that
$\lambda_1 \leq \lambda_2 \leq \dots \to \infty$ denote the Neumann
spectrum of the Laplace--Beltrami operator $\Delta_g$. We form the
distribution
\begin{equation}\label{eq: Melrose cosine trace}
   \sum_{\lambda \in \spec(\Delta_g)} \cos(t \sqrt{\lambda}) .
\end{equation}
Under certain geometric conditions (where there is no symmetry) for a
simpler manifold described there, the authors prove the following: Let
$T$ be the singular support of \eqref{eq: Melrose cosine trace} and
the only closed geodesics, $\gamma$, of period $T$ satisfy certain
geometric conditions and have Maslov indices $\sigma_\gamma$. Then for
$t$ near $T$, \eqref{eq: Melrose cosine trace} is equal to the real
part of
\begin{equation}\label{eq: Melrose trace}
   \sum_{T_\gamma = T} \ii^{\sigma_\gamma}
       \frac{T^\sharp_\gamma}{(|I-P_\gamma|)^{1/2}}
            (t-T+0 \ii)^{-1} + L^1_{loc} ,
\end{equation}
where $T^\sharp$ is the primitive period and $P_\gamma$ is a certain
Poincar\'{e} map described there. Our results will settle the question
of whether there is a formula analogous to \eqref{eq: Melrose trace}
(same as~\cite[(1.3)]{Mel79}) for the distributions $\sum_{\lambda \in
  \spec(P)} \cos(t \sqrt{\lambda})$ and $\sum_{\lambda \in
  \spec(c)} \cos(t \sqrt{\lambda})$ in our spherically
symmetric manifold with boundary, encompassing a ball and an
annulus\footnote{The ball is representative of Earth's inner core
  while the annulus is representative for Earth's mantle.}.

We will prove a trace formula using a WKB expansion of
eigenfunctions. To this end, it is convenient to establish a
connection with the Schr\"{o}dinger equation. Indeed, we present an
asymptotic transformation finding this connection. In boundary normal
coordinates $(r,\theta)$ (which are spherical coordinates in dimension
three by treating $\theta$ as coordinates on the $2$-sphere),
\begin{equation}
   P = \rho^{-1} (-r^{-2} \p_r r^2 \mu \p_r
                  - \mu r^{-2} \Delta_\theta + P_0) ,
\end{equation}
where $\Delta_\theta$ is the Laplacian on the $2$-sphere. We write
$\lambda = \omega^2$ as before and rewrite the second-order equation \eqref{eq: scalar P}
in the form
\begin{align}
Y&= \left(\ba{l}
       r v \\ r \mathcal{N} v
       \ea\right) ,
\label{eq:Ydef}\\
\p_r Y &= A(r,\omega) Y ,
\label{eq:fors}
\intertext{where}
   A_{11} &= 2 r^{-1} ,
\nonumber\\
   A_{21} &= (-\Delta_\theta - 2) r^{-2} \mu - \omega^2 \rho ,
\nonumber\\
   A_{12} &= \mu^{-1} ,
\nonumber\\
   A_{22} &= -2 r^{-1} ,
\nonumber
\end{align}
satisfying
\[
   J A = -A^* J\quad\text{with}\quad
   J = \left(\ba{rr} 0 & 1 \\ -1 & 0 \ea\right) ,\quad J^2 = -I .
\qquad
\]
The Neumann condition is applied at $r = \surf$ and at $r = \CMB$. In
preparation of the asymptotic analysis we will instead invoke the
transformation using the same notation (by abuse of notation,
cf.~\eqref{eq:Ydef})
\begin{equation}
   Y = \left(\ba{l}
       r v \\ \omega^{-1} r \mathcal{N} v
       \ea\right) .
\end{equation}
With this definition, the matrix $A$ in~\eqref{eq:fors} admits the
expansion
\begin{equation}
   A(r,\omega) = \omega \, [A_0(r) + \omega^{-1} A_1(r)
                                         + \omega^{-2} A_2(r)] ,
\end{equation}
\begin{equation*}
   A_0(r) = \begin{pmatrix} 0 & \mu^{-1} \\
            -\rho - r^{-2} \omega^{-2}\mu\Delta_\theta
                              & 0 \end{pmatrix} ,
\quad
   A_1(r) = \begin{pmatrix} 2 r^{-1} & 0 \\
                                       0 & -2 r^{-1} \end{pmatrix} ,
\end{equation*}
\begin{equation*}
   A_2(r) = \begin{pmatrix} 0 & 0 \\ -2 r^{-2} \mu & 0 \end{pmatrix} ,
\end{equation*}
viewing $\omega$ as a large parameter. A similarity transform gives
\begin{equation}
   A_0 R = R \Lambda ,\qquad
   \Lambda(r) = \begin{pmatrix} 0 & 1 \\
                                -\tilde{\wkb}^2 & 0 \end{pmatrix} ,\ \
   R(r) = \begin{pmatrix} \mu^{-1/2} & 0 \\
                          0 & \mu^{1/2} \end{pmatrix} ,
\end{equation}
where we have defined
\begin{equation}
   \tilde\wkb^2 = \rho(r) \mu(r)^{-1}
                 + \omega^{-2}r^{-2} \Delta_\theta .
\end{equation}
We then seek an asymptotic transformation $L$ and $W$
\[
   Y = L W\quad\text{with}\quad
   L(r,\omega) = \sum_{j \ge 0} L_j(r) \omega^{-j} ,
\]
so that the original matrix system implies
\begin{equation}
   \partial_r W = \omega \Lambda(r) W .
\end{equation}
Substitution and equating terms with equal powers of $\omega^{-1}$
gives
\begin{eqnarray}
   A_0 L_0 - L_0 \Lambda &=& 0 ,
\\
   A_0 L_1 - L_1 \Lambda &=& \partial_r L_0 - A_1 L_0 ,
\end{eqnarray}
and so on. We find a simple solution
\begin{equation}
   L_0 = r^{-1} R(r).
\end{equation}
Then
\begin{equation}
%\label{eq:}
\begin{split}
L_0 W &= \left(\ba{l}
       \mu^{-1/2} w \\ \omega^{-1} \mu^{1/2} \partial_r w
       \ea\right) ,\\
   r v &= \mu^{-1/2} w ,\\
   r \mathcal{N} v &= \mu^{1/2} \partial_r w ,\\
   W_1 &= r w ,\\
   W_2 &= \omega^{-1} r \partial_r w .
\end{split}
\end{equation}
Here, $w$ satisfies the equation
\begin{equation}
   (\partial_r^2  + r^{-2} \Delta_\theta
         + \omega^2 \rho \mu^{-1}) w = 0 ,\qquad
   \p_r w = 0\ \text{ on }\ \p M .
\end{equation}
If $y(\theta)$ is an eigenfunction of $\Delta_\theta$ with eigenvalue
$-k^2$ and $V = V(r)$ is radial function, we choose $w = w_k = V(r)
y(\theta)$ so that $V(r)$ must now satisfy
\begin{equation}\label{eq:VSchro}
   \p_r^2 V + \omega^2 \wkb^2 V = 0 ,\quad
   \p_r V = 0\ \text{ on }\ \p M ,
\end{equation}
where $\wkb = \rho(r) \mu(r)^{-1} - \omega^{-2}r^{-2}k^2$, generating
two linearly independent solutions. The WKB asymptotic solution to
this PDE with Neumann boundary conditions will precisely give us the
leading order asymptotics for the trace formula, and is all that is
needed. We note that
\[
   r U = \mu^{-1/2} V\ \text{ and }\ r T(r) = \mu^{1/2} \p_r V .
\]
For the boundary condition, we note that we would end up with the same
partial differential equation with different boundary conditions for
$V$ in the previous section if we had used the boundary condition
$\p_r u = 0 \text{ on }\p M$. Indeed, one would merely choose
$\mathcal{N}u = \mu \p_r u$ instead without the $0$'th order
term. However, the boundary condition for $V$ would be of the form
\[
   \p_r V = K(r) V\quad\ \text{ on }\ \p M
\]
with $K$ signifying a smooth radial function. Nevertheless, the
leading order (in $\omega$) asymptotic behavior for $V$ stays the same
despite the $K$ term as clearly seen in the calculation of section
\ref{sec: WKB eigenfunctions}. Thus, our analysis applies with no
change using the standard Neumann boundary conditions. This should
come as no surprise since in~\cite{Mel79}, the $0$'th order term in the
Neumann condition played no role in the leading asymptotic analysis of
their trace formula. Only if one desires the lower-order terms in the
trace formula would it play a role.

\subsection{Poincar\'{e} maps and the trace formula}
\label{sec:Poincare and trace}

Here, we describe the relevant Poincar\'{e} map that will appear in
the trace formula and state the trace formula that we will prove. Let
$ \mathbb{R} \ni t \to \gamma(t)$ be a periodic broken
bicharacteristic in $S^*M$ of period $T>0$ (see~\cite{Mel79} for the
relevant definitions). It is associated to the metric
$c^{-2}(\abs{x})e$, where $c$ is a smooth radial function, $e$ is the Euclidean metric,
and $\gamma$ undergoes reflections in $\p S^* M$ according to Snell's
law. We also denote by $\Phi^T \colon S^* M \to S^* M$ the broken
bicharacteristic flow of $T$ units of time as described in
\cite{Mel79}. Its fixed point set is given by
\[
   C_T \coloneqq \{ \eta \in S^* M ; \Phi^T(\eta) = \eta \}
\]
and without loss of generality, we assume that $C_T$ is connected, for
otherwise, we would look at a connected component instead. We impose
the \emph{clean intersection hypothesis} appearing in
\cite{DG75,Mel79} so that $C_T$ is a submanifold for any $T$ and $T_mC_T = \ker(\Id-
\der\Phi^T(m))$ at each point $m \in C_T$.
This holds for all periodic orbits if and only if $c$ satisfies the periodic conjugacy condition (definition~\ref{def:pcc}) which requires that the endpoints of a single maximal geodesic segment of a periodic broken ray are never conjugate; see remark~\ref{rmk:pcc}.

By construction, the image of $\gamma$ belongs to $C_T$. There is an
obvious symplectic group action of $SO(3)$ on $T^*M$ under which the
Hamiltonian $\frac{1}{2} c^2 \abs{\xi}^2$ is invariant; here, $\xi$
denotes the dual variable to $x$. Thus, for each $g \in SO(3)$, the
set $g \cdot \Im(\gamma)$ given by the group action also belongs
to $C_T$. Assuming that $c$ has no other symmetries (follows from the Herglotz condition) and the periodic conjugacy condition, all elements of
$C_T$ are obtained this way. This is because a periodic orbit will
either fail to be periodic or not have period $T$ after a small
perturbation of the angular momentum $p$; hence, $p$ remains constant
on $C_T$. Thus, $C_T$ may be parameterized by $\mathbb{R}_t \times
SO(3)$, revealing that under the Herglotz and periodic conjugacy conditions
\[
   \dim(C_T) = 4 .
\]
The Herglotz condition ensures that the group action never coincides
with the geodesic flow; without it, the dimension could quite possibly
be smaller. Hence, $T_m S^* M / T_m C_T$ is only one-dimensional and
we obtain an induced map on the quotient space,
\[
   I - \der\Phi^T \colon T_m S^* M / T_m C_T \to T_m S^* M / T_m C_T .
\]
We denote the equivalence class of all closed orbits of period $T$
related to $\gamma$ by an element of $SO(3)$ or by a time reversal of
$\gamma$ by $[\gamma]$. We write the above map as $I - P_{[\gamma]}$
and refer to $P_{[\gamma]}$ as the \emph{Poincar\'{e} map} associated
to the equivalence class of $\gamma$. Now, $I - P_{[\gamma]}$ will end
up being an isomorphism, whose determinant at each point $m \in C_T$
will stay invariant. Hence, the quantity $\abs{I - P_{[\gamma]}}^{-1}
\coloneqq \abs{\det(I -P_{[\gamma]})}^{-1}$ is well defined as a
single number associated to $[\gamma]$.

In the above, $\gamma$ may be multiple revolutions of another closed
orbit of minimal period called the \emph{primitive orbit} associated
to $\gamma$, which has a primitive period denoted $T^\sharp$. Note
that $T$ will merely be a positive integer multiple of $T^\sharp$. In
spherical symmetry, $\gamma$ is confined to a disk and it must be a
concatenation of geodesic segments that travel from the outer boundary
$r = \surf$ to either the reflection point $r = \CMB$ or the turning
point $r = \rturn$ (see section~\ref{sec: spectral rigidity proof} for
details). We let $N_\gamma$ denote the number of these segments
comprising the primitive orbit associate to $\gamma$.

We now state our proposition pertaining to the trace formula.

\begin{proposition}\label{prop:Trace Formula}
Suppose the radial wave speed $c$ satisfies the Herglotz condition and the periodic conjugacy condition (definition~\ref{def:pcc}).
Assume additionally that the length spectrum is non-degenerate in the sense that any two periodic broken rays of the same length are rotations of each other.

The distribution $(\operatorname{Tr} \, (\p_t G))(t) =
\sum_{n,l}(2l+1)\cos(t{}_n\omega_l)$ is singular precisely at the
length spectrum. Suppose $T \in \operatorname{singsupp}(\Tr
(\p_t G))$ and let $d$ be the dimension of the fixed point set for
$\Phi^T$. Suppose that $\gamma$ is one of the broken periodic orbits
of period $T$. Then $d=4$ and for $t$ near $T$, the contribution of
$[\gamma]$ to the leading singularity of $(\Tr(\p_t
G))(t)$ is the real part of
\[
   (t-T+ \ii 0)^{-(d+1)/2} \left(\frac{1}{2\pi \ii}\right)^{(d-1)/2}
   \ii^{\sigma_{\gamma}} \abs{I-P_{[\gamma]}}^{-1/2}
        \frac{T^\sharp_{\gamma}}{2\pi N_{\gamma}} c_d \abs{SO(3)} ,
\]
where
\begin{enumerate}
\item[$\bullet$] $\sigma_{\gamma}$ is the KMAH index associated to
  $\gamma$ defined in~\cite{Mel79};
\item[$\bullet$] $c_d$ is a constant depending only on $d$;
\item[$\bullet$] $\abs{SO(3)}$ is the volume of the compact Lie group
  $SO(3)$ under the Haar measure.
\end{enumerate}
\end{proposition}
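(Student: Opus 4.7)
The plan is to follow the Duistermaat--Guillemin--Melrose paradigm but exploit spherical symmetry so that the oscillatory-integral analysis can be carried out explicitly by WKB together with a Debye-type Poisson summation, rather than by abstract FIO parametrices. I start from the normal-mode representation~(\ref{normal-mode-summation}) for $G$. Taking the trace and using the orthonormality relations~(\ref{eq:U2norm})--(\ref{eq:Ylmnorm}), the angular dependence collapses and one arrives at~(\ref{eq:TrptG}): $\Tr(\p_t G)(t) = \sum_{n,l}(2l+1)\Re\{e^{\ii\,_n\omega_l t}\}$. By the reduction in Section~\ref{sec: connect to LB}, each radial eigenfunction is (up to lower-order corrections) equivalent to a solution $V$ of the Schr\"odinger-type equation~(\ref{eq:VSchro}) with Neumann conditions, whose WKB representation has leading phase $\omega\int\wkb(r;p)\der r$ with $\wkb$ as in~(\ref{eq:L}) and $p=l/\omega$ the classical angular-momentum parameter.

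Second, to turn the double sum over $(n,l)$ into an oscillatory integral, I would Poisson-sum in both indices. The radial quantization of $\,_n\omega_l$ is governed by the Bohr--Sommerfeld condition attached to~(\ref{eq:VSchro}), and substituting the WKB kernel into the trace and setting $l=\omega p$ converts the double sum into a double series over integers $(N,M)$ of oscillatory integrals whose phase has the schematic form
\[
   \Phi_{N,M}(t,\omega,p) = -t\,\omega + 2N\omega\, L(r_*(p)) - 2M\omega p\,\alpha(r_*(p))
        + \text{turning-point and reflection phases},
\]
with $L$ and $\alpha$ the half-length and half-angular-width of~(\ref{eq:L}) and~(\ref{eq:alpha}) and $r_*(p)$ the turning radius. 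Stationarity in $p$ forces $N\alpha(r_*(p))=M\pi$, which is exactly the closure condition for a broken ray made of $N$ segments winding $M$ times around the origin; stationarity in $\omega$ forces $2NL(r_*(p))=t$, identifying $t$ with the total length. Hence the phase has a real critical point only when $t\in\lsp(c)$ (or $\lsp'(c)$ once reflecting segments are included), which proves the singular-support statement; for $t$ outside the length spectrum, repeated non-stationary integration yields a $C^\infty$ contribution.

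Near each $T\in\lsp(c)$ and each primitive class $[\gamma]$, I would apply stationary phase on the critical set $C_T$, which by the discussion of Section~\ref{sec:Poincare and trace} has dimension $d=4$: one direction from the broken flow and three from the $SO(3)$ orbit. The transverse Hessian of $\Phi_{N,M}$, after differentiating the implicit relations for $L$ and $\alpha$ in $p$, is identified with the linearization of the broken bicharacteristic flow restricted to a symplectic transversal, yielding the factor $\abs{I-P_{[\gamma]}}^{-1/2}$. Integration along $C_T$ contributes $T^\sharp_\gamma/N_\gamma$ (the primitive period divided by the segment count, accounting for the over-counting of primitive orbits by the Poisson index $N$) together with $\abs{SO(3)}$ from the group orbit, while the KMAH index $\sigma_\gamma$ is assembled from the Airy connection at each radial turning point and the Neumann reflection at each boundary bounce, matching the definition in~\cite{Mel79}. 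The power $(t-T+\ii 0)^{-(d+1)/2}$ and the normalising constant $c_d(2\pi\ii)^{-(d-1)/2}$ emerge from Fourier-transforming the $\omega$-singularity against the $d$-dimensional critical volume. The non-degeneracy hypothesis on the length spectrum ensures that distinct equivalence classes $[\gamma]$ do not interfere at a common $T$, so summing the contributions over $[\gamma]$ yields the claimed expression.

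The main obstacle is the management of the turning points in the WKB/Debye step. Because $\wkb(r;p)$ vanishes at $r_*(p)$, the pointwise WKB representation of $V$ breaks down there and must be patched via Airy-type connection formulas; this patching is also what produces the half-integer shift in Bohr--Sommerfeld and hence the correct Maslov contribution to $\sigma_\gamma$. Equally delicate is obtaining uniform-in-$p$ estimates so that the Poisson-summed representation is globally valid and so that the remainders from non-stationary regions are genuinely smoother than the claimed leading singularity. Finally, identifying the transverse Hessian with $I-P_{[\gamma]}$ requires relating $\partial_p L$ and $\partial_p\alpha$ to the monodromy of the linearised broken-ray flow around $\gamma$, a concrete but careful computation that leans on the symplectic reduction already set up in Section~\ref{sec:Poincare and trace}.
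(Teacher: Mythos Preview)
Your outline is broadly correct and shares the skeleton of the paper's argument---WKB radial eigenfunctions, conversion of the double quantum-number sum into oscillatory integrals indexed by classical ray data, and stationary-phase identification of periodic broken rays---but the execution differs in two substantive respects.

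First, the paper does \emph{not} Poisson-sum in both indices. It Poisson-sums only in the angular quantum number $l$ (producing the orbit index $s$ and the travelling-wave Legendre functions $Q^{(1,2)}_{\omega p-1/2}$ that carry the angular propagation), and handles the radial overtone sum $\sum_n$ by the \emph{Debye expansion} of Appendix~\ref{sec:Debye exp}: using the WKB form of $U_n$ together with the Bohr--Sommerfeld dispersion relation, the sum $\sum_n U_n(r)U_n(r_0)/(\omega_n^2-\omega^2)$ is resummed in closed form as $\sum_i \exp[-\ii\omega\tau_i(r,r_0;p)+\ii N_i\pi/2]$, where the $\tau_i$ are the radial delay times of successive ray legs and the $N_i$ are the turning-point counts. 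This is morally equivalent to your Poisson-in-$n$ step but is more explicit: the ray-theoretic delays and the Maslov contributions are produced directly rather than extracted afterwards from a stationary-phase reduction, and the Airy-patching issue you flag is absorbed once and for all into the Debye resummation.

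Second, the paper keeps the kernel $\hat G(x,x_0,\omega)$ \emph{off the diagonal} throughout the Poisson/Debye reduction and the steepest-descent integration in $p$, arriving at an expression recognisable term-by-term as a Melrose--Guillemin FIO parametrix (Section~\ref{sec: Melrose/guill construction}). Only then are the curl operators and $\partial_t$ applied, the diagonal restricted, and the spatial trace evaluated by stationary phase in $(r,\theta,\psi)$. Keeping the propagator off-diagonal is what makes the identification of the transverse Hessian with $\abs{p_k^{-1}\partial_p^2\tau_i}$---and hence with $\abs{I-P_{[\gamma]}}$---transparent, and lets the $T^\sharp_\gamma/N_\gamma$ and $\abs{SO(3)}$ factors be read off from an explicit integral $\int (c^2\wkb)^{-1}\,2\sin\theta\,\dd r\,\dd\theta\,\dd\psi$. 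Your trace-first, double-Poisson route (closer in spirit to Berry--Tabor for integrable systems) is more economical in that one mechanism handles both sums, but the identification of the Poincar\'e factor and the correct power of $\omega$ then rests on the ``concrete but careful computation'' you mention at the end, rather than on comparison with an already-understood FIO structure.
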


%\noindent
In appendix~\ref{sec:Trace with symmetry} we identify this trace
formula in the framework of manifolds with symmetries given by a
compact Lie group.
In appendix~\ref{app:edge-cases} we discuss some edge cases that justify our geometric assumptions.

\begin{remark}\label{rem: trace holds for laplace beltrami}
This trace formula is in fact a more general statement than that for
the Neumann Laplace--Beltrami operator, which is just a special case of
the above proposition.
\end{remark}

\begin{remark}
\label{rmk:tr-pcc-nondeg}
The periodic conjugacy condition and non-degeneracy of the length spectrum are needed to prove that the singular support of the trace is precisely the length spectrum.
However, they are not necessary for spectral rigidity or proving that the spectrum determines the length spectrum.

For unique determination of the length spectrum, it is enough that the primitive length spectrum (excluding all but primitive orbits) is non-degenerate.
Given the singularity at $T^\sharp$, we know what the singularities at $2T^\sharp,3T^\sharp,\dots$ will be.
If they are not as expected, then another broken ray must contribute a singularity at the same place, and we have found the next primitive length.
This allows to recover the primitive length spectrum and therefore the whole length spectrum from the (shapes and locations of) singularities in the trace.
But if two primitive lengths coincide, there is no way to distinguish the corresponding singularities.

If we drop the periodic conjugacy condition, then some periodic orbits may fail the clean intersection hypothesis.
This allows us to recover only a part of the length spectrum from the singularities, but this part is enough.
Such problematic periodic broken rays are ignored anyway in the proof of length spectral rigidity, since they might not be stable under deformations.
\end{remark}

\section{Proof of the Trace Formula (proposition~\ref{prop:Trace Formula})}
\label{sec: trace formula}

In this section, we prove the trace formula in the form of proposition
\ref{prop:Trace Formula} for the annulus. The idea behind the proof is
to construct rather explicitly a fundamental solution. First, we do
some preliminary analysis to manipulate $G$ into the right form before
taking its trace. Concretely, in subsection~\ref{sec: eigenfunction
  asymtotics}, we construct WKB eigenfunction solutions to get
explicit formulas for the leading order asymptotics of the
eigenfunctions. Afterwards, in subsection~\ref{sec:Poisson formula} we
use the classical Poisson summation formula and the Debye expansion to
write the leading order asymptotics for $G$ as a certain
propagator, which relates the eigenfunctions to geodesics in the
annulus. At that point, we use section~\ref{sec: Melrose/guill
  construction} to show how all our constructions are quite natural
and directly relate to the wave propagator appearing in
\cite{Mel79}. Finally, we complete the proof in section~\ref{sec:final
  proof of trace} by taking traces and carrying out the method of
steepest descent and stationary phase to obtain the desired asymptotic
formula appearing in proposition~\ref{prop:Trace Formula}.

In the further analysis, we employ the summation formula,
\begin{equation} \label{eq:SYtP}
   \sum_{m=-l}^l Y^m_l(\theta,\psi) Y^m_l(\theta_0,\psi_0)
           = \frac{(2l + 1)}{4\pi} \, P_l(\cos \Theta) ,
\end{equation}
where the $P_l$ are the Legendre polynomials, with $P_l(1) = 1$, and
$\Theta$ signifies the angular epicentral distance,
\begin{equation}\label{eq: eq for big Theta}
   \cos \Theta\ =\ \cos\theta \, \cos\theta_0
       + \sin\theta \, \sin\theta_0 \, \cos(\psi - \psi_0) .
\end{equation}

%\medskip\medskip

\begin{remark}
We note that $\hat{G}$ is the kernel of the resolvent in the
time-harmonic formulation. The normal mode summation becomes
\begin{equation} \label{eq:nmsom}
   \hat{G}(x,x_0,\omega) = \frac{1}{2\pi}\
          \sum_{l=0}^{\infty} \sum_{n=0}^{\infty}\
     (l + \tfrac{1}{2})\ {}_n \hat{f}_l(\omega)\ \underbrace{
        {}_n\mathbf{D}_l ({}_n\mathbf{D}_l)_0}_{\eqqcolon {}_n H_l}\
                   P_l(\cos \Theta) ,
\end{equation}
explicitly showing the eigenfrequencies as simple poles
(cf.~\eqref{eq:nflom}).
\end{remark}

We abuse notation and denote
\[
   {}_n H_l = k^{-2} U(r) U(r_0)
\]
in the formula for $G$ to not treat the curl operations at first. This
will not cause a risk of confusion since we will specify the exact
moment that we apply the curl operators, which will be just before
taking the trace in subsection~\ref{sec:final proof of trace}.

\subsection{Asymptotic analysis of
         eigenfunctions} \label{sec: eigenfunction asymtotics}

We describe the radial eigenfunctions and their asymptotic expansions
for general $k \in \RR$. We then introduce the dispersion relations,
$\omega_n(k)$.

\subsubsection{WKB eigenfunctions}\label{sec: WKB eigenfunctions}

Here, we consider asymptotic solutions, $V$, to \eqref{eq:VSchro}.
Depending on $p$, we distinguish the following regimes:
%\\[-0.2cm]
\begin{itemize}
\item
\textbf{Evanescent} ($\surf / c(\surf) < p < \infty$). Here,
$\wkb^2(r) < 0$, and the solution is always non-oscillatory, that is,
evanescent. We do not obtain eigenfunctions.
\item
\textbf{Diving} ($\CMB/c(\CMB) < p < \surf/c(\surf))$: We summarize
the WKB solution of~\eqref{eq:VSchro} in the vicinity of a general
turning point. A turning point, $r = \rturn$, is determined by
\[
   \wkb^2(\rturn) = 0 .
\]
Near a turning point, $r \approx \rturn$, and
\[
   \wkb^2(r) \simeq \alpha_0 (r - \rturn) .
\]
Away from a turning point,
\[
   \wkb^2 > 0\text{ if $r \gg \rturn$} ,\quad
   \wkb^2 < 0\text{ if $r \ll \rturn$} .
\]
Matching asymptotic solutions yields
\begin{equation}
   %B \left\{ \ba{lcl}
   %\displaystyle{
   %\abs{\wkb}^{-1/2} \exp\left(-\omega
   %\int_r^{\rturn} \abs{\wkb} \, \dd r\right)
   %},
         %&& r \ll \rturn
%\\[0.5cm]
   %2 \pi^{1/2} \alpha_0^{-1/6} \omega^{1/6}
         %\mathrm{Ai}(- \omega^{2/3} \alpha_0^{1/3} (r - \rturn)),
         %&& r \simeq \rturn
%\\[0.25cm]
   %\displaystyle{
   %2 \wkb^{-1/2} \cos \left(-\omega \int_{\rturn}^r \wkb \, \dd r
                              %- \pi/4 \right)
   %},
         %&& r \gg \rturn.
   %\ea\right.
%END OLD VERSION
B
\begin{cases}
\abs{\wkb}^{-1/2} \exp\left(-\omega
\int_r^{\rturn} \abs{\wkb} \, \dd r\right)
,
& r \ll \rturn
\\[.5em]
2 \pi^{1/2} \alpha_0^{-1/6} \omega^{1/6}
\mathrm{Ai}(- \omega^{2/3} \alpha_0^{1/3} (r - \rturn)),
& r \simeq \rturn
\\[.5em]
2 \wkb^{-1/2} \cos \left(-\omega \int_{\rturn}^r \wkb \, \dd r
   - \pi/4 \right)
,
& r \gg \rturn
.
\end{cases}
\end{equation}
From these one can obtain a uniform expansion, that is, the Langer
approximation
\begin{equation}
\begin{split}
%\hspace*{0.75cm}
   V(r,\omega;p) &= 2 \pi^{1/2} \chi^{1/6} (-\wkb^2)^{-1/4}
                   \mathrm{Ai}(\chi^{2/3}(r)) ,\\
   \chi(r) &= -(3/2) \omega \int_{\rturn}^r (-\wkb^2)^{1/2} \dd r ,
\end{split}
\end{equation}
valid for $r \in [\CMB,\surf]$. One obtains eigenfunctions
corresponding with turning rays.

Up to leading order, where
$r \gg \rturn$,
\begin{align}
   V &= 2 B \wkb^{-1/2}
       \cos \left(\omega \int_{\rturn}^r \wkb \, \dd r'
                              - \pi/4 \right),
\\
   \partial_r V &= -2 \omega B \wkb^{-1/2}
       \sin \left(\omega \int_{\rturn}^r \wkb \, \dd r'
                              - \pi/4 \right).
\end{align}
Here, $B$ is obtained from the normalization~\eqref{eq:U2norm}, which
requires the uniformly asymptotic solution over the entire interval
$[\CMB, \surf]$. Applying the
Riemann--Lebesgue lemma, one obtains
\begin{equation} \label{eq:RLII}
   \int_\CMB^\surf
        U(r)^2 \rho(r) r^2 \dd r \simeq 2 B^2
     \int_{\rturn}^{\surf} \wkb^{-1} c^{-2} \dd r .
\end{equation}
Here, $\int_{\rturn}^{\surf} \wkb^{-1} c^{-2} \dd r$ can be
identified with the half one-return travel time, $\tfrac{1}{2} T$, say.
\item
\textbf{Reflecting} ($0 < p < \CMB / c(\CMB))$: The solutions are
oscillatory in the entire interval $[\CMB, \surf]$ ($\wkb^2(r;p) >
0$), correspond with reflecting rays, and are of the form
\begin{align}
   V &= C \wkb^{-1/2} \exp\left( \ii \omega
       \int_{\CMB}^r \wkb \dd r' \right)
       + D \wkb^{-1/2} \exp\left( -\ii \omega
       \int_{\CMB}^r \wkb \dd r' \right),
\\[0.25em]
   \partial_r V &= \ii \omega C \wkb^{1/2}
                  \exp\left( \ii \omega
       \int_{\CMB}^r \wkb \dd r' \right)
       - \ii \omega D \wkb^{1/2} \exp\left( -\ii \omega
       \int_{\CMB}^r \wkb \dd r' \right).
\end{align}
to leading order. Imposing the Neumann condition,
$T(\CMB) = 0$, implies that $D = C$. The
constant $C$ is obtained from the normalization~\eqref{eq:U2norm}
using the oscillatory solution over the entire interval
$[\CMB, \surf]$. Applying the
Riemann--Lebesgue lemma yields
\begin{equation} \label{eq:RLIII}
   \int_{\CMB}^{\surf}
        U(r)^2 \rho(r) r^2 \dd r \simeq 2 C^2
     \int_{\CMB}^{\surf}
            \wkb^{-1} c^{-2} \dd r .
\end{equation}
Here, $\int_{\CMB}^{\surf} \wkb^{-1} c^{-2} \dd r$ can be identified
with the half one-return reflection travel time, $\tfrac{1}{2} T$,
say.
\end{itemize}

\subsubsection{Boundary condition and dispersion relations}

We backsubstitute $p = \omega^{-1} k$ in $\wkb$. Imposing the
Neumann boundary condition yields: %\\[-0.2cm]
\begin{itemize}
\item
\textbf{Diving} ($\CMB /
c(\CMB) < p < \surf /
c(\surf))$:
\begin{equation} \label{eq:efII}
   \omega \int_{\rturn}^{\surf}
            \wkb(r';\omega^{-1} k) \, \dd r'
                      = \left( n + \frac{5}{4} \right) \pi
\quad
   \text{($n$ is the overtone index)} .
\end{equation}
\item
\textbf{Reflecting} ($0 < p < \CMB /
c(\CMB)$):
\begin{equation} \label{eq:efIII}
   \omega \int_{\CMB}^{\surf}
            \wkb(r';\omega^{-1} k) \dd r'\ =\ (n+1) \pi
\quad
   \text{($n$ is the overtone index)} .
\end{equation}
\end{itemize}
All these (radial) quantization-type conditions yield solutions ${}_n
\omega_k \eqqcolon \omega_n(k)$. Using the implicit function theorem, we
introduce $k_n = k_n(\omega)$ as the solution of
\[
   \omega - \omega_n(k_n) = 0 .
\]

We revisit the general relation between phase and group velocities. We
have
\[
   c_n(k) = \frac{\omega_n(k)}{k}
\]
and
\[
   C_n = \pdpdo{(c_n k)}{k} = c_n + k \, \pdpdo{c_n}{k} .
\]
The corresponding ray parameter is given by
\begin{equation} \label{eq:pcnk}
   p = \frac{k}{\omega_n(k)} = \frac{1}{c_n(k)} .
\end{equation}

\subsection{Poisson's summation formula}\label{sec:Poisson formula}

\subsubsection{Analytic continuation}

The Legendre equation is of second order and hence admits two linearly
independent solutions: Legendre functions of the first and second
kind, $P_{\lambda}$, $Q_{\lambda}$, with integral representations
\begin{align}
   P_{\lambda}(z) &= \frac{2}{\pi} \operatorname{Im}
     \int_0^{\infty} \left[ z -
        \ii \sqrt{1 - z^2} \cosh(t) \right]^{-\lambda - 1} \, \dd t ,
\label{Pint}\\
   Q_{\lambda}(z) &= \phantom{\frac{2}{\pi}} \operatorname{Re}
     \int_0^{\infty} \left[ z -
        \ii \sqrt{1 - z^2} \cosh(t) \right]^{-\lambda - 1} \, \dd t ,
\label{Qint}
\end{align}
for $z \in [-1,1]$. These integral representations are valid for
$\operatorname{\lambda} > -1$. Analytic continuation of the integral
representation for $P_{\lambda}$, $Q_{\lambda}$ into the region
$\operatorname{Re} \{ \lambda \} \le -1$ follows the relations
\begin{eqnarray}
   P_{-\lambda-1} &=& P_{\lambda} ,
\label{Pc}\\
   Q_{-\lambda-1} &=&
           Q_{\lambda} - \pi \, \cot(\lambda \pi) \, P_{\lambda} .
\label{Qc}
\end{eqnarray}
The bottom equation implies that $Q_{\lambda}$ has simple poles at the
negative integers $\lambda = -1, -2, \ldots$. The Legendre function
of the first kind, $P_{\lambda}$, coincides with the Legendre
polynomials for $\lambda = l = 0, 1, 2, \ldots$ (cf.~\eqref{eq:SYtP}).

\subsubsection{Application of Poisson's formula}

Poisson's formula is given by
\begin{equation} \label{eq:P}
   \sum_{l=0}^{\infty} f(l + \tfrac{1}{2})
        = \sum_{s = -\infty}^{\infty}  (-)^s \int_0^{\infty} f(k)
                 e^{-2 \ii s k \pi} \, \dd k .
\end{equation}

%\medskip\medskip

\begin{remark}
Poisson's formula can be obtained from the Watson transformation: If
$f$ is a function analytic in the vicinity of the real axis, and
$\mathcal{C} = \mathcal{C}^- \cup \mathcal{C}^+$ is a contour around
the positive real axis, then
\begin{equation} \label{eq:W}
   \sum_{l=0}^{\infty} f(l + \tfrac{1}{2})
        \ =\ \tfrac{1}{2} \int_{\mathcal{C}} f(k) [\cos(k \pi)]^{-1}
                 e^{-\ii k \pi} \, \dd k .
\end{equation}
The integrand in the right-hand side has simple poles at $(2n+1) / 2$,
$n \in \N_0$ -- where $\cos(k \pi) = 0$. It follows from the residual
theorem. Poisson's formula is obtained as follows. In the limit $s \to
\infty$ the path of integration is $\mathcal{C}^-$, while in the limit
$s \to -\infty$ the path of integration is $\mathcal{C}^+$. One then
expands
\[
   [\cos(k \pi)]^{-1}
         = \frac{2 e^{-\ii k \pi}}{1 + e^{-2 \ii k \pi}}
\]
in a series separately for $\operatorname{Im} \{ k \} > 0$ and $<
0$.
\end{remark}

%\medskip\medskip

We apply Poisson's formula to the summation in $l$ in~\eqref{eq:nmsom}
while keeping the summation in $n$ intact. We obtain
\begin{equation} \label{eq:PSS}
   \hat{G}(x,x_0,\omega) = \frac{1}{2\pi}\
          \sum_{s=-\infty}^{\infty}\ (-)^s
     \int_0^{\infty} \left[
     \sum_{n=0}^{\infty} \hat{f}_n(k;\omega)\ H_n(k) \right]
          P_{k - 1/2}(\cos \Theta) e^{-2 \ii s k \pi} \, k \dd k .
\end{equation}
This expression can be rewritten as
\begin{multline} \label{eq:PSS-poss}
   \hat{G}(x,x_0,\omega) = \frac{1}{2\pi}\
          \sum_{s=1}^{\infty}\ (-)^s
     \int_0^{\infty} \left[
     \sum_{n=0}^{\infty} \hat{f}_n(k;\omega)\ H_n(k) \right]
          P_{k - 1/2}(\cos \Theta)
        \{ e^{-2 \ii s k \pi} + e^{2 \ii s k \pi} \} \, k \dd k
\\
   + \frac{1}{2\pi}\ \int_0^{\infty} \left[
     \sum_{n=0}^{\infty} \hat{f}_n(k;\omega)\ H_n(k) \right]
          P_{k - 1/2}(\cos \Theta) \, k \dd k .
\end{multline}

\subsection*{Traveling-wave Legendre functions}

Traveling-wave Legendre functions are given by
\begin{align}
   Q_{\lambda}^{(1)} &= \frac{1}{2} \left( P_{\lambda}
                       + 2 \frac{\ii}{\pi} Q_{\lambda} \right) ,
\\
   Q_{\lambda}^{(2)} &= \frac{1}{2} \left( P_{\lambda}
                       - 2 \frac{\ii}{\pi} Q_{\lambda} \right) .
\end{align}
Using~\eqref{Pc}--\eqref{Qc} yields the continuation relations
\begin{align}
   Q_{-\lambda-1}^{(1)} &= Q_{\lambda}^{(1)}
                 - \ii \, \cot(\lambda \pi) \, P_{\lambda} ,
\label{Q1c}\\
   Q_{-\lambda-1}^{(2)} &= Q_{\lambda}^{(2)}
                 + \ii \, \cot(\lambda \pi) \, P_{\lambda} .
\label{Q2c}
\end{align}
Both have simple poles at the negative integers.

%\medskip\medskip

\begin{remark}
Their asymptotic behaviors, as $\abs{\lambda} \gg 1$, are (assuming that
$\Theta$ is sufficiently far away from the endpoints of $[0,\pi]$)
\begin{align}
   Q_{\lambda-1/2}^{(1)} &\simeq
     \left( \frac{1}{2\pi \lambda \sin \Theta} \right)^{1/2}
              e^{-\ii (\lambda \Theta - \pi/4)} ,
\label{Q1tr}\\
   Q_{\lambda-1/2}^{(2)} &\simeq
     \left( \frac{1}{2\pi \lambda \sin \Theta} \right)^{1/2}
              e^{+\ii (\lambda \Theta - \pi/4)} ,
\label{Q2tr}
\end{align}
upon substituting $z = \cos \Theta$. Taking into consideration the
time-harmonic factor $e^{\ii \omega t}$, it follows that $Q^{(1)}$
represents waves traveling in the direction of increasing $\Theta$,
while $Q^{(2)}$ represents waves traveling in the direction of
decreasing $\Theta$.
\end{remark}

%\medskip\medskip

To distinguish the angular directions of propagation, one decomposes
\begin{equation} \label{PP}
   P_{k-1/2}(\cos \Theta) = Q_{k-1/2}^{(1)}(\cos \Theta)
                  + Q_{k-1/2}^{(2)}(\cos \Theta).
\end{equation}
Substituting~\eqref{PP} into~\eqref{eq:PSS}, we get
\begin{multline}
   \hat{G}(x,x_0,\omega) = \frac{1}{2\pi}\
          \left[ \mstrut{0.6cm} \right.
   \sum_{s = 1,3,5,\ldots} (-)^{(s-1)/2}
\\
   \int_0^{\infty} \left[
     \sum_{n=0}^{\infty} \hat{f}_n(k;\omega)\ H_n(k) \right]
          Q_{k - 1/2}^{(1)}(\cos \Theta)
          \{ e^{-\ii (s-1) k \pi} - e^{\ii (s+1) k \pi} \}
                   \, k \dd k
\\
   + \sum_{s = 2,4,\ldots} (-)^{s/2}
     \int_0^{\infty} \left[
     \sum_{n=0}^{\infty} \hat{f}_n(k;\omega)\ H_n(k) \right]
          Q_{k - 1/2}^{(2)}(\cos \Theta)
          \{ e^{-\ii s k \pi} - e^{\ii (s-2) k \pi} \}
                   \, k \dd k  \left. \mstrut{0.6cm} \right] .
\end{multline}
In the application of~\eqref{eq:P}--\eqref{eq:W}, the contour
$\mathcal{C}^-$ is followed for the series containing $e^{-\ii (s-1) k
  \pi}$ and $e^{-\ii s k \pi}$, while the contour $\mathcal{C}^+$ is
followed for the series containing $e^{\ii (s+1) k \pi}\ \ $ and
$e^{\ii (s-2) k \pi}$.

\subsubsection{Preparation for the method of steepest descent}

In preparation of the application of the method of steepest descent,
we rewrite the $k$-integral from $\RR_{\ge 0}$ to $\RR$. We make use
of the analytic extension of $Q_{k - 1/2}^{(1,2)}$ from real positive
to real negative $k$ values. With~\eqref{Q1c}--\eqref{Q2c} we find that
\begin{equation}
   Q_{-k - 1/2}^{(1,2)}(\cos \Theta)
     = e^{\pm 2 \ii k \pi} Q_{k - 1/2}^{(1,2)}(\cos \Theta)
       + e^{\pm \ii k \pi} \tan(k \pi) P_{k - 1/2}(-\cos \Theta) .
\end{equation}
Using the symmetry of $\hat{f}_n(.;\omega)$, $H_n(.)$, it follows that
the integrals over $P_{k - 1/2}(\cos \Theta)$ cancel. Then
\begin{multline} \label{eq:GintQk}
   \hat{G}(x,x_0,\omega) = \frac{1}{2\pi}\
   \sum_{s = 1,3,5,\ldots} (-)^{(s-1)/2}
\\
   \int_{-\infty}^{\infty}
   \left[ \mstrut{0.6cm} \right. \sum_{n=0}^{\infty}
          \hat{f}_n(k;\omega)\ H_n(k) \left. \mstrut{0.6cm} \right]
          Q_{k - 1/2}^{(1)}(\cos \Theta)
          e^{-\ii (s-1) k \pi} \, k \dd k
\\
   + \frac{1}{2\pi}\ \sum_{s = 2,4,\ldots} (-)^{s/2}
   \int_{-\infty}^{\infty}
   \left[ \mstrut{0.6cm} \right. \sum_{n=0}^{\infty}
          \hat{f}_n(k;\omega)\ H_n(k) \left. \mstrut{0.6cm} \right]
          Q_{k - 1/2}^{(2)}(\cos \Theta)
          e^{-\ii s k \pi} \, k \dd k .
\end{multline}
The integrands in the terms of these series can be identified as wave
constituents travelling along the surface or boundary, the
representations of which can be obtained by techniques from
semi-classical analysis. Indeed, $s$ is referred to as the
(multi-orbit) arrival number while we distinguish the orientation of
propagation in the two series. The term $s = 1$ corresponds with waves
that propagate from source to receiver along the minor arc; the term
$s = 2$ corresponds with waves that propagate from source to receiver
along the major arc. At $\Theta = 0$ and $\Theta = \pi$ the traveling
wave Legendre functions have logarithmic singularities, namely $\log
\Theta$ at $\Theta = 0$ and $\log (\pi - \Theta)$ at $\Theta = \pi$;
the singularities cancel when taking the sums together.

\subsubsection{Traveling wave expansion}

Here, we apply to~\eqref{eq:GintQk} the Debye expansion described in
appendix~\ref{sec:Debye exp} to obtain a form more closely resembling
a wave propagator:
\begin{multline}
   \sum_{n=0}^\infty \hat{f}_n(k;\omega) H_n(k)
\\
   = k^{-2} (rr_0c(r)c_0(r))^{-1}(\rho(r)\rho(r_0)
         \wkb(r;p)\wkb(r_0;p))^{-1/2}
     \frac{1-c_n^{-1}C_n}{2\ii \omega}
\\
   \sum_{i=1}^\infty \exp\left[-\ii \omega \tau_i(r,r_0;p)
            + \ii N_i \frac{\pi}{2}\right] ,
\end{multline}
where the $\tau_i$ are time-delay functions which we will relate to
the travel time of a geodesic, and $N_i$ are integers contributing to
the KMAH index as described in appendix~\ref{sec:Debye exp}.

Next, we change variables of integration from $k$ to $p$. We
encounter the Jacobian (cf.~\eqref{eq:pcnk})
\[
   \pdpdo{p}{k} = \frac{1}{c_n}
           \left( \frac{C_n}{c_n} - 1 \right) \frac{1}{k} ,
\]
so that
\[
   p^{-1} \dd p = (1 - c_n^{-1} C_n) \, k^{-1} \dd k .
\]
The path of integration is beneath the real axis, while taking $\omega
> 0$. After making the substitution into~\eqref{eq:GintQk}, we obtain
\begin{align*}
   &\hat{G}(x,x_0,\omega)
\\
   &\quad = \frac{1}{4\pi}\
   \sum_{s = 1,3,5,\ldots} (-)^{(s-1)/2}
   (rr_0c(r)c_0(r))^{-1}(\rho(r)\rho(r_0))^{-1/2}
\\
   &\quad\quad \int_{-\infty}^{\infty}
   (\wkb(r;p)\wkb(r_0;p))^{-1/2}
   \left[ \mstrut{0.6cm} \right. \sum_{i=0}^{\infty}
          \exp\left[-\ii \omega \tau_i(r,r_0;p)
   + \ii N_i \frac{\pi}{2}\right] \left. \mstrut{0.6cm} \right]
\\
   &\quad\quad\quad\quad
          Q_{\omega p - 1/2}^{(1)}(\cos \Theta)
          e^{-\ii \omega(s-1) p \pi} \, p^{-1} \dd p
\\
   &\quad  + \frac{1}{4\pi}\ \sum_{s = 2,4,\ldots}
  (-)^{s/2}(rr_0c(r)c_0(r))^{-1}(\rho(r)\rho(r_0))^{-1/2}
\\
   &\quad\quad \int_{-\infty}^{\infty}
   (\wkb(r;p)\wkb(r_0;p))^{-1/2}
   \left[ \mstrut{0.6cm} \right. \sum_{i=0}^{\infty}
         \exp\left[-\ii \omega \tau_i(r,r_0;p)
   + \ii N_i \frac{\pi}{2}\right] \left. \mstrut{0.6cm} \right]
\\
   &\quad\quad\quad\quad
          Q_{\omega p - 1/2}^{(2)}(\cos \Theta)
          e^{-\ii \omega s p \pi} \, p^{-1} \dd p .
\end{align*}

\begin{remark}
To explicitly reveal the connection with geometrical optics we
consider individual terms in~\eqref{eq:GintQk}, upon changing the
variable of integration. We consider the first term, substitute the
resummation inside the integral, and insert the leading order
expansion,
\[
   Q_{\omega p - 1/2}^{(1)}(\cos \Theta) \simeq
     \left( \frac{1}{2\pi \omega p \sin \Theta} \right)^{1/2}
              e^{-\ii (\omega p \Theta - \pi/4)}
\]
(cf.~\eqref{Q1tr}) to obtain (cf.~\eqref{eq:GintQk})
\begin{multline}\label{eq: wave propagator form}
   \frac{1}{2\pi} (-)^{(s-1)/2}
     \int_{-\infty}^{\infty} \left[ \sum_{n=0}^{\infty}
     \frac{U_{n}(r;p) U_{n}(r_0;p)}{\omega_n^2 - \omega^2}
           \, (1 - c_n^{-1} C_n)^{-1} \right]
\\
     Q_{\omega p - 1/2}^{(1)}(\cos \Theta)
     e^{-\ii \omega (s-1) p \pi} \, p^{-1} \dd p
\\
   \simeq \frac{1}{4\pi} (-)^{(s-1)/2}
       (r r_0 c(r) c(r_0))^{-1}
       (2\pi \rho(r) \rho(r_0) \sin \Theta)^{-1/2}
\hspace*{3.0cm}
\\
   \int (\wkb(r;p) \wkb(r_0;p))^{-1/2}
   \sum_{i=1}^{\infty}
           \exp[-\ii \omega (\tau_i(r,r_0;p) + p \Theta + (s-1) p \pi)]
\\
   \exp[\ii (\pi/4) (2 N_i - 1)] (\omega p)^{-3/2} \dd p .
\end{multline}
\end{remark}

In the next section, we review the abstract argument appearing in
\cite{Mel79} in order to justify why we consider the above expression
a wave propagator. It will also motivate the method of steepest
descent and stationary phase calculation that we perform in section
\ref{sec:final proof of trace}.

\subsection{Melrose--Guillemin wave propagators}
\label{sec: Melrose/guill construction}

In~\cite{Mel79}, Guillemin and Melrose show how the Neumann half-wave
propagator, $e^{\pm \ii t\sqrt{-\Delta_g}}$, may be written as a sum
of operators denoted $\tilde{V}^i_\pm(t)$, such that for a fixed $t$,
it is a canonical graph FIO whose canonical relation is a certain
billiard map in phase space that we will briefly describe. In order to
avoid corners, they embed $M$ in a boundaryless manifold $\tilde{M}$
and then $\tilde{V}^i_+$ is an FIO on $\tilde{M} \times M$. It maps a
covector $(y,\eta) \in S^*M^o$ to the ``time $t$'' endpoint of a
broken bicharacteristic that undergoes $i$ reflections at points
$x_k(y, \eta) \in \p M$, $k=1,\dots,i$. Precisely, let $t_i =
t_i(y,\eta)$ denote the travel time along the broken bicharacteristic
to the last reflection point $(x_i,\xi_i) \in \p_+ S^*M$ and let
$(x_i,\xi^r_i)\in \p_-S^*M$ be the reflected covector pointing
``inside'' $M$. The canonical graph maps $(y,\eta)$ to
$\Phi^{t-t_i}(x_i,\xi_i^r)\in S^* \tilde{M}$ (here, $\Phi^t$ denotes
the bicharacteristic flow described in section~\ref{sec: efunction and
  quant}).  If fewer than $i$ reflections take place by time $t$ or it
maps outside of $M$, then $\tilde{V}^i_+(t)$ is microlocally smoothing
at such covectors. It will be convenient to denote a covector locally
in polar coordinates as $\eta = \abs{\eta}_{g(y)}\hat{\eta} \in
T^*_yM$. Then the corresponding conic Lagrangian may by parameterized
by a phase function of the form
\[
\phi_i(t,x,y,\eta) = \abs{\eta}_g(-t+S_i(x,y, \hat{\eta})),
\]
where, essentially, $S_i$ gives the travel time between points $x,y$
of the broken geodesic undergoing $i$ reflections that starts at
$(y,\hat{\eta})$ (when one finds $\hat{\eta} \in S^*_yM$ that
minimizes $S_i$).

For the following calculations, all that is necessary is that
$\tilde{V}_+^i(t)$ is a canonical graph FIO and that $\phi_i$ is a
phase function that locally parameterizes the conic Lagrangian
associated to the canonical graph. Thus, the Schwartz kernel of
$\tilde{V}_+^i(t)$ is indeed given locally (as described in
\cite{Mel79}) by
\[
(2\pi)^{-n}\int e^{\ii(-t\abs{\eta}_g+S_i(x,y,\eta))}a(t,x,y,\eta) \dd\eta,
\]
where $a$ is a classical symbol of order $m=0$.
If we introduced spherical coordinates in $\eta$ with radial variable $\omega \coloneqq \abs{\eta}_g$ and took the leading order (homogeneous of degree $m=0$) part of the classical symbol $a$,
then it becomes clear that~\eqref{eq: wave propagator form}
has the same form after a Fourier transform $\mathcal{F}_{\omega \to t}$ with phase function $- \omega(\tau_i(r,r_0;p) + p
\Theta + (s-1)p\pi)$ corresponding to $S_i= \omega S_i(x,y,\hat \eta)$ above. The order in $\omega$ does not match yet because
we have not yet applied the curl operators nor $\p_t$. The similarities become clearer as we proceed with the stationary phase calculations for both operators.

After taking the trace of the full propagator, the contribution by
$\tilde{V}_+^i$ is
\[
(2\pi)^{-n}\int e^{\ii(-t\abs{\eta}_g+S_i(x,x,\eta))}a(t,x,x,\eta) \dd x\dd\eta.
\]

We change variables into polar coordinates by first defining a map
$f\colon \RR^+_{\omega} \times S^*M \to T^*M$,
\[
f(\omega,x,p) = (x,\omega p).
\]
Then $f^\star\abs{dT^*M} = \omega^{n-1}d\omega\abs{dS^*M}$.  Without
loss of generality, since we only consider the leading order terms, we
may assume that $a$ is an $m$-homogeneous (in $\eta$) symbol. After a
change of variables into polar coordinates (keeping in mind that $a$
is supported in a local coordinate chart where $S^*M$ is trivialized),
the above integral is:
\begin{equation}\label{eq: double int after trace}
(2\pi)^{-n}\int_0^{\infty} e^{-\ii\omega t} \omega^{n+m-1} \dd\omega \int_{M\times
S^{n-1}} e^{\ii\omega S_i(x,x,p)} a(t,x,x,p) \dd x \dd p.
\end{equation}

\subsection*{Stationary phase with a degenerate phase function}

To obtain the leading order asymptotics, we apply the method of
stationary phase. We denote the critical set of $S_i$ (viewed as
function on $S^*M$) locally as $C_{S_i} = \{(x,p) \in S^*M; d_{x,p}S =
(d_xS_i(x,x,p)+d_yS_i(x,x,p), d_pS_i)= 0 \}.$ We may break up this set
into a countable number of disjoint connected components given by the
values of $S_i$:
\[
C_{S_i} = \bigcup _{k=1}^\infty C_{T_{ik}}, \
C_{T_{ik}} = \{ (x,p) \in S^*M; S_i=T_{ik}, \ d_{x,p}S_i = 0\}.
\]

By construction of the phase function, $C_{T_{ik}}$ is precisely the
fixed point set of the bicharacteristic flow $\Phi^{T_{ik}}$. Let
$d_{ik}$ be the codimension of $C_{T_{ik}}$. If $C_{T_{ik}}$
intersects $\p M$ transversely, then we may find independent
coordinates $z=z_1, \dots, z_{2n-1}$ such that $C_{T_{ik}}$ is given
by the vanishing of $z_1,\cdots, z_{d_{ik}}$ and $\p M$ is given by
$z_{2n-1} =0$.  For notation purposes, it will be convenient to denote
$z_I = (z_1,\cdots,z_{d_{ik}})$,
$z_{II}=(z_{d_{ik}+1},\cdots,z_{2n-2})$ and $\mathcal{O}$ a neighborhood where
such coordinates are valid.

We assume that $S_i$ is nondegenerate in the directions normal to the critical
 manifold $C_{T_{ik}}$, so that after a change of coordinates still denoted by the
 same letters, $S_i$ is locally given by
\[
S_i(z)= T_{ik}+\frac{1}{2}\sum_{r,s = 1}^{d_{ik}}\langle
\p_{z_r}\p_{z_s}S_i(0,z_{II},z_{2n-1})z_r, z_s \rangle.
\]
This follows precisely from the generalized Morse lemma. The Jacobian due to such a
change of coordinates is identically $1$ when restricted to $C_{T_{ik}}$ and so we
exclude it on our analysis to have simpler formulas.

To proceed, let $J = \abs{\frac{\p(x,p)}{\p z}}$ be the Jacobian factor
resulting from the change of variables and $\rho_\mathcal{O}$ a smooth localizer supported in
$\mathcal{O}$. Denoting $\tilde{a} = aJ$, the contribution of the inner integral of~\eqref{eq:
double int after trace} within $\mathcal O$ is
\begin{align*}
&(2\pi)^{-n}\int_0^{\infty}\int e^{\ii\omega S_i(z)}\rho_\mathcal{O} \tilde{a}(z)\dd z'\dd z_{2n-1}
\\
&\qquad= (2\pi)^{-n}\int_0^{\infty}\int e^{\ii\omega(T_{ik}+\frac{1}{2}\sum_{r,s =
1}^{d_{ik}}\langle \p_{z_r}\p_{z_s}S_i(0,z_{II},z_{2n-1})z_r, z_s
\rangle)}\rho_\mathcal{O}\tilde{a}(z)\dd z_I\dd z_{II}\dd z_{2n-1}
\\
&\qquad\sim \omega^{-d_{ik}/2}e^{\ii\omega T_{ik}}
(2\pi)^{(d_{ik}-2n)/2}\int_0^{\infty}\int
\tilde{a}^\star(0,z_{II},z_{2n-1})\dd z_{II}\dd z_{2n-1} + O(\omega^{-d_{ik}/2-1}).
\end{align*}

In the above, we applied stationary phase in the variables $z_I$ corresponding to normal
directions of $C_{T_{ik}}$. This requires the normal Hessian of $S_i$ which is
non-degenerate:
\[
d^2_{N}S_i \coloneqq (\p_{z_i}\p_{z_j}S)_{i,j=1,\dots,d_{ik}}|_{z_I=0},
\]
so that the leading amplitude after stationary phase becomes
\[
\tilde{a}^\star(t,0,z_{II},z_{2n-1}) = e^{\ii\frac{\pi}{4}\sgn( d^2_NS_i)}
\abs{d^2_NS_i}^{-1/2}\rho_\mathcal{O}\tilde{a}|_{C_{T_{ik}}} .
\]
One may also show that $J(0,z_{II},z_{2n-1})\dd z_{II}\dd z_{2n-1}$ is the
natural induced measure on $C_{T_{ik}}$ obtained from $dS^*M$ in local
coordinates. We label this measure as $\dd \mu_{ik}$.

At this point, we may actually write
\[
   a(t,z) = a(T_{ik},z) + O(t-T_{ik}),
\]
and use $-D_\omega e^{-i\omega(t-T_{ik})} =
(t-T_{ik})e^{-\ii\omega(t-T_{ik})}$.  Using integration by parts in
$\omega$, we may replace $a(t,z)$ by $a(T_{ik},z)$ since the
difference will lead to a term that is an order lower in $\omega$.
Substituting this into the inner integral above, setting
\[\alpha_{ik}=e^{\ii\frac{\pi}{4}\sgn( d^2_NS_i)}
\left(\frac{1}{2\pi}\right)^{(\dim(C_{T_{ik}})-1)/2}\int_{C_{T_{ik}}} a
\abs{d^2_NS_i}^{-1/2}  \ \dd \mu_{ik},\]
 the leading order term in our trace formula
becomes (with $m=0$ since $a$ is $0$th order)
\begin{equation*}
\begin{split}
&
\Re \sum_{ik} \alpha_{ik}\frac{1}{2\pi}\int_0^{\infty} e^{-\ii\omega(t-T_{ik})}
\omega^{n+m-1-d_{ik}/2} \dd \omega
%&=\Re
%\sum_{ik}\alpha_{ik}\frac{1}{2\pi}\int_0^{\infty} e^{-i\omega(t-T_{ik})}
%\omega^{(2n-2-d_{ik})/2} d\omega
%\\
\\&
=\Re \sum_{ik}\alpha_{ik}\frac{1}{2\pi}\int_0^{\infty} e^{-\ii\omega(t-T_{ik})}
\omega^{(\dim(C_{T_{ik}})-1)/2} \dd \omega .
\end{split}
\end{equation*}
 In the case of isolated periodic geodesics, one has $\dim(C_{T_{ik}})=1$. The above
 calculation would then be the real part of
\begin{align*}
\frac{1}{2\pi}\sum_{ik}\alpha_{ik}\int_0^{\infty} e^{-\ii\omega(t-T_{ik})} \dd\omega
&= \frac{1}{2\pi}\lim_{\epsilon \to 0}\sum_{ik}\alpha_{ik}
\frac{-1}{\ii(T_{ik}-t)-\epsilon}
=\frac{1}{2\pi} \sum_{ik}\alpha_{ik} \frac{\ii}{(T_{ik}-t)+\ii0}
\end{align*}
For the spherically symmetric case, $\dim(C_{T_{ik}}) = 4$ which
leads to a bigger singularity. In the next subsection, we apply
analogous computations for taking the trace in our setting.

\subsection{Proof of Proposition~\ref{prop:Trace Formula}}
\label{sec:final proof of trace}

We are finally ready to apply the method of steepest descent and
stationary phase to complete the proof of the trace formula.

\begin{proof}[Proof of proposition~\ref{prop:Trace Formula}]
We interchange the order of summation and integration, and invoke the
method of steepest descent. We carry out the analysis for a single
term, $s=1$. For $s=2,4,\ldots$ we have to add $s p \pi$ to $\tau_i$,
and for $s=3,5,\ldots$ we have to add $(s-1) p \pi$ to $\tau_i$,
in the analysis below.

We find (one or more) saddle points for each $i$, where
\[
   \partial_p \tau_i(r,r_0,p)|_{p = p_k} = -\Theta .
\]
Later, we will consider the diagonal, setting $r_0 = r$ and $\Theta =
0$. We label the saddle points by $k$ for each $i$ (and $s$). We note
that $r, r_0$ and $\Theta$ determine the possible values for $p$
(given $i$) which corresponds with the number of rays connecting the
receiver point with the source point (allowing conjugate points). For
$s=1$, the rays have not completed an orbit. With $s = 3$ we begin to
include multiple orbits.

% The $p_k$'s are the ray parameters generating geodesics associated
% with the propagation of singularities of body-wave phases. In the
% absence of conjugate points, at short source-receiver distance, for
% toroidal modes, \text{s} ($i=1$), \textit{ScS} and \textit{S} ($i=2$),
% \textit{ss} ($i=3$), \textit{ScSs} and \textit{Ss} ($i=4$),
% \textit{sScS} and \textit{sS} ($i=5$), $\textit{ScS}_2$ and
% \textit{SS} ($i=6$), .. connecting the receiver with the
% source. (Thus, we order $k = 1,2,\ldots$.)

%\medskip\medskip
%
%\noindent

We carry out a contour deformation over the saddles and obtain
\begin{equation} \label{eq:abws}
\begin{aligned}
   &\frac{1}{2\pi} (-)^{(s-1)/2}
     \int_{-\infty}^{\infty} \left[ \sum_{n=0}^{\infty}
     \frac{U_{2;n}(r;p) U_{2;n}(r_0;p)}{\omega_n^2 - \omega^2}
           \, (1 - c_n^{-1} C_n)^{-1} \right]
     Q_{\omega p - 1/2}^{(1)}(\cos \Theta)
\\
     &\hspace{1in}\qquad \qquad e^{-\ii \omega (s-1) p \pi} \, p^{-1} \dd p
\\
   &\hspace{1in}\simeq \frac{1}{4\pi} (-)^{(s-1)/2}
       (r r_0 c(r) c(r_0))^{-1}
       (2\pi \rho(r) \rho(r_0) \sin \Theta)^{-1/2}
\\[0.2cm]
   &\hspace{1in}\qquad \sum_i \sum_k
   \left[ \omega^{-2}p^{-3/2} (\wkb(r;.) \wkb(r_0;.))^{-1/2}
         \abs{\partial_p^2 \tau_i(r,r_0;.)}^{-1/2} \right]_{p = p_k}
\\
	&\hspace{1in}\qquad\qquad
                 e^{-\ii \omega T_{ik} + \ii M_{ik} (\pi/2)} ,
\end{aligned}
\end{equation}
where
\begin{equation}
\begin{split}
   T_{ik} &= T_{s;ik}(r,r_0,\Theta)
       = \tau_i(r,r_0;p_k) + p_k \Delta_s ,\\
   M_{ik} &= N_i - \tfrac{1}{2} (1 -
               \sgn \partial_p^2 \tau_i |_{p = p_k}) ,
\end{split}
\end{equation}
in which
\begin{equation}
   \Delta_s =
\begin{cases}
   \Theta + (s-1) \pi & \text{if $s$ is odd} \\
   -\Theta + s \pi    & \text{if $s$ is even.}
                   \end{cases}
\end{equation}
The $M_{ik}$ contribute to the KMAH indices, while the $T_{ik}$ represent
geodesic lengths or travel times. The orientation of the contour
(after deformation) in the neighborhood of $p_k$ is determined by
$\sgn \partial_p^2 \tau_i |_{p = p_k}$. We note that
\begin{itemize}
\item
$M_{ik} = M_{s;ik}(r,r_0,\Theta)$ for multi-orbit waves
  ($s=3,4,\ldots$) includes polar phase shifts produced by any angular
  passages through $\Theta = 0$ or $\Theta = \pi$ as well;
\item
if $r$ lies in a caustic, the asymptotic analysis needs to be adapted
in the usual way.
\end{itemize}

%\medskip

Next, we apply both curl operations $[-\hat \theta (\sin
  \theta)^{-1}\p_{\psi} + \hat \psi \p_{\theta}],[-\hat \theta_0 (\sin
  \theta_0)^{-1}\p_{\psi_0} + \hat \psi_0 \p_{\theta_0}]$ to each term
in the sum above and then tensor the vectors together in order to
obtain a sum of $2$-tensor fields. This will give us the actual normal
mode summation of \eqref{eq:nmsom}. Since we are interested in only
the leading order asymptotics in $\omega$, we need only consider these
operations to the term $\exp[-\ii\omega T_{ik}]$ which gives $(\omega
p_k)^2 \mathbf{D}(\theta,\psi,\theta_0,\psi_0)$ where $\mathbf{D}$ is
a 2-tensor field in the angular variables. We also apply an inverse
Fourier transform followed by $\p_t$ to the formula above (since we
are interested in the cosine propagator $\p_t G$) to obtain to leading
order
\begin{multline}
 \simeq \frac{1}{4\pi} (-)^{(s-1)/2}
       (r r_0 c(r) c(r_0))^{-1}
       (2\pi \rho(r) \rho(r_0) \sin \Theta)^{-1/2}
\\[0.2cm]
   \sum_i \sum_k
   \left[ p^{1/2} (\wkb(r;.) \wkb(r_0;.))^{-1/2}
         \abs{\partial_p^2 \tau_i(r,r_0;.)}^{-1/2} \right]_{p =
         p_k}\mathbf{D}(\theta,\psi,\theta_0,\psi_0)
\\
   \frac{1}{2\pi}\int_0^{\infty}\ii\omega  \exp[-\ii \omega( T_{ik}-t) +
               \ii M_{ik} (\pi/2)] \dd\omega,
\end{multline}
It will now be convenient to denote the above quantity by $F^{(1)}$ if
$s$ is even and $F^{(2)}$ if $s$ is odd.

Thus, we have a sum of kernels of FIOs associated with the wave
propagator. (Here, the summation over $i, k$ signifies the summation
over (broken) geodesics while the summation over $s$ signifies the
number of orbits, travelling clockwise or counterclockwise.)

Since we will need to restrict $\p_t G$ to the diagonal ($\Theta = 0$,
$r'=r$), we must be careful with the $(\sin \Theta)^{-1/2}$
term. Fortunately, this term comes from the asymptotics of $Q_{\omega
  p- 1/2}^{(1)}$ and $Q_{\omega p- 1/2}^{(2)}$ which merely represent
the two different directions of one particular geodesic. Of course, a
periodic broken orbit has the same period no matter which direction
one travels and in the trace formula, all orbits of a particular
period are combined. Hence, we can combine $Q_{\omega p- 1/2}^{(1)}$
and $Q_{\omega p- 1/2}^{(2)}$ near $\Theta=0$, which is a logarithmic
singularity that cancels when both of the functions are added
together, and this will not affect the trace formula.

Precisely, after substituting the asymptotic formula for
$Q^{(1,2)}_{\omega p_k - 1/2}(\cos \Theta)$, we may write for $s\geq
3$ odd
\begin{multline}
F^{(1,2)} \simeq \frac{1}{4\pi} \ii^{-1/2}(-)^{(s-1)/2}
       (r r_0 c(r) c(r_0))^{-1}
       (\rho(r) \rho(r_0))^{-1/2}(\sqrt{2\pi})
\\[0.2cm]
   \sum_i \sum_k
   \left[ p (\wkb(r;.) \wkb(r_0;.))^{-1/2}
         \abs{\partial_p^2 \tau_i(r,r_0;.)}^{-1/2} \right]_{p =
         p_k}\mathbf{D}(\theta,\psi,\theta_0,\psi_0)
\\
               \frac{1}{2\pi}\int_0^{\infty}\ii\omega^{3/2}  \exp[-\ii \omega(
               \tau_{ik}+p_k(s-1)\pi-t) + \ii M_{ik} (\pi/2)] Q^{(1,2)}_{\omega p_k
               - 1/2}(\cos(\Theta))\dd\omega,
\end{multline}
Asymptotically, as $\Theta \to 0$ one has
$Q^{(1)}_{\omega p_k - 1/2}(\cos(\Theta))+ Q^{(2)}_{\omega p_k -
1/2}(\cos(\Theta))
\simeq 1$ and so

 \begin{multline}
F^{(1)}+F^{(2)} \simeq -\frac{\pi}{(2\pi i)^{3/2}} (-)^{(s-1)/2}
       (r r_0 c(r) c(r_0))^{-1}
       (\rho(r) \rho(r_0))^{-1/2}
\\[0.2cm]
   \sum_i \sum_k
   \left[ p (\wkb(r;.) \wkb(r_0;.))^{-1/2}
         \abs{\partial_p^2 \tau_i(r,r_0;.)}^{-1/2} \right]_{p =
         p_k}\mathbf{D}(\theta,\psi,\theta_0,\psi_0)
\\
               \frac{1}{2\pi}\int_0^{\infty}i\omega^{3/2}  \exp[-\ii \omega(
               T_{ik}-t) + \ii M_{ik} (\pi/2)] \dd\omega,
\end{multline}
as $\Theta \to 0$.

It will be convenient to denote
\begin{multline}
A_{s;ik}(r,r_0,\Theta)
= (-)^{(s-1)/2}
       (r r_0 c(r) c(r_0))^{-1}
       (\rho(r) \rho(r_0))^{-1/2}
       \\
       \cdot \left[ p (\wkb(r;.) \wkb(r_0;.))^{-1/2}
         \abs{\partial_p^2 \tau_i(r,r_0;.)}^{-1/2} \right]_{p =
         p_k}D(\theta,\psi,\theta_0,\psi_0)
\end{multline}
where now $D$ is a scalar function, defined as the inner product of
the two vector fields that make up $\mathbf{D}$. One may check using
l'Hospital's rule and equation~\eqref{eq: eq for big Theta} that
$D|_{\theta = \theta_0,\psi = \psi_0} = 2$.

Next, we take the trace of $\p_t G$ by restricting to $(r=r_0,\Theta =
0)$ and integrating. The phase function on the diagonal is $T_{ik} =
\tau_i(r,r,p_k)+\pi(s-1)p_k$ and we apply stationary phase in the
variables $r,\theta,\psi$ with large parameter $\omega$. Since one has
$\p_p T_i(r,r,p) = 0$ at $p=p_k$, the critical points occur precisely
when
\[
\p_r T_i(r,r,p) + \p_{r_0}T_i(r,r,p) = 0, \qquad \p_p T_i(r,r,p) = 0.
\]
After a quick calculation, the first condition forces $T_i$ to be
independent of $r$. Also, we showed that for geodesics with turning
points, $U = O(\omega^{-\infty})$ when $r < \rturn$. Finally, using
the inverse Fourier transform,
%\[
%   \frac{1}{\pi} \operatorname{Re} \int_0^{\infty}
%          \exp[\ii (\omega (t - T) + M (\pi/2))] \, \dd\omega
%   = \mathcal{H}^{M} \delta(t - T) ,
%   \]
\[
   \int_0^{\infty} \exp[i\omega(t-T)]\omega^{3/2} \dd\omega
        = c_d(t-T+\ii0)^{-5/2} ,\ \text{ with $c_d$ a constant.}
\]
Setting $R_{ik}= \{(r,\theta,\psi); r \in [\tilde{R},\surf], \;
\p_rT_i(r,r,p_k) + \p_{r_0}T_i(r,r,p_k)=0\}$ where $\tilde{R} =
\rturn$ or $\CMB$ depending on $p_k$, we have shown that in fact
$T_{ik}$ remains constant over $R_{ik}$ so that only certain $i$ are
allowable. We find that modulo terms of lower order in $\omega$, the
trace microlocally corresponds with~\cite{Langer}
\begin{equation}
   \operatorname{Re}\sum_s \sum_i \sum_k\
         \left(\frac{1}{2\pi \ii}\right)^{3/2}(t-T_{s;ik}+\ii0)^{-5/2}
       \ii^{M_{ik}}c_d\frac{1}{2} \int_{R_{ik}} A_{s;ik}(r,r,0) \rho r^2 \sin
       \theta \dd r
                 \dd\theta \dd\psi
\end{equation}
and we use \eqref{eq:RLII},~\eqref{eq:RLIII}. Here,
\[
   T_{s;ik} = T_{s;ik}(r,r;0) = \tau_i(r,r;p_k)
     + \left\{\begin{array}{rcl}
           p_k (s-1) \pi & \text{if $s$} & \text{odd} \\
           p_k s \pi     & \text{if $s$} & \text{even}
              \end{array}\right.
\]
is independent of $r$.
We note that $p_k$ exists only for $i$, and $s$, sufficiently large,
which reflects the geometrical quantization.

From this expression, it is clear that the singular support of the
trace consists of the travel times of periodic geodesics.

\begin{remark}
It is now apparent how the above formula relates to the trace formula
in~\cite{Mel79}. A term above for a certain $i,s,k$ index
corresponds to the trace of $\tilde{V}_i^\pm$ integrated over a
critical manifold $C_{T_{ik}}$ (which in our case is $R_{ik}$) as
described in section~\ref{sec: Melrose/guill construction}. In both
cases, the index $i$ is used to keep track of the number of
intersections of the broken ray with the boundary while the $k$ index
specifies a particular periodic ray and period.

The index $s$ in $T_{ik;s}$ describing travel time has no analog for
general manifolds and certainly does not appear in~\cite{DG75} and
\cite{Mel79}.  This is because the parameter $s$ is merely a byproduct
of using spherical coordinates to construct an FIO on the ball and
using the particular phase function we have. It is used to keep track
of the angular distance a particular geodesic has traversed in the
disk, since, while the angular distance is greater than $2\pi$ when a
particular geodesic traverses the full disk more than once, our
angular coordinates only range from $[0,2\pi]$. The factor
$(-)^{(s-1)/2}$ is part of the KMAH index corresponding to antipodal
conjugate points in the sphere that the associated geodesic (when
projected to $r=\surf$) passes through. The other part of the KMAH
index comes from $M_{ik}$.

The Poincar\'{e} map appearing in~\cite{DG75} and~\cite{Mel79}
corresponds to the factor $\abs{p_k^{-1}\p_p^2\tau_i|_{p=p_k}}^{-1/2}$
in our trace formula and the factor $\abs{I-P_{[\gamma]}}^{-1/2}$
described in section~\ref{sec: efunction and quant}. In~\cite{DG75},
this factor generally varies over the entire critical manifold
$C_{T_{ik}}$ when its dimension is greater than $1$, but will not in
our case due to the symmetry. In fact, this is how we know that this
factor corresponds to the Poincar\'{e} map: it must be a quantity that
remains constant over the critical manifold $R_{ik}$.
\end{remark}

We further simplify the above formula, that is, the integral involving
$A_{s;ik}$. First, since $T_{ik}$ is independent of $r$, then so is
$\tau_i(r,r;p) = \tau_i(p)$. Thus, we may pull $\p^2_p\tau_i$ out of
the integral involving $A_{s;ik}$ precisely because we are integrating
over a closed orbit:
\begin{align*}
 &\int_{R_{ik}} A_{s;ik}(r,r,0) \rho r^2 \sin \theta \dd r
                 \dd\theta \dd\psi
                 \\
 &\qquad \qquad =(-)^{(s-1)/2}\abs{p_k^{-1}\p^2_p\tau_i(p_k)}^{-1/2}\int_{R_{ik}}
 \frac{1}{c^2\beta(r,p_k)} 2\sin \theta
\dd r \dd\theta \dd\psi.
\end{align*}
We recall that the travel time $T$ for a piece of a geodesic from
$r_0$ ro $r$ is
\[
T = \int_{r_0}^r \frac{dr'}{c^2\beta(r',p_k)} \dd r'.
\]
Hence, denoting $T_{ik}^\sharp$ as the primitive period of the
geodesic, we obtain
\[
\int_{R_{ik}} \frac{1}{c^2\beta(r,p_k)} 2\sin \theta
\dd r \dd\theta \dd\psi = \frac{T_{ik}^\sharp}{N_{ik}} \int_0^{2\pi}\int_0^{\pi}
2\sin \theta
\dd\theta \dd\psi
= \frac{ T_{ik}^\sharp}{\pi N_{ik}}\abs{SO(3)},
\]
where $N_{ik}$ is the number of geodesic segments from $\tilde{R}$ to
$\surf$ along the primitive orbit with length $T^\sharp_{ik}$, and
$\abs{SO(3)}$ is the volume of $SO(3)$ under a Haar measure
\cite{Creagh92}.

Substituting these calculations, the leading order term in the trace
formula is
\begin{align} \label{eq: alpha_ik for spheric symm}
&\operatorname{Re}\sum_s \sum_i \sum_k\
         \left(\frac{1}{2\pi \ii}\right)^{3/2}(t-T_{s;ik}+\ii0)^{-5/2}
       \ii^{M_{ik}+s-1}c_d\\
&\qquad \qquad \qquad \qquad      \cdot \frac{T_{ik}^\sharp}{
N_{ik}}\abs{p_k^{-1}\p^2_p\tau_i(p_k)}^{-1/2}
\frac{1}{2\pi}\abs{SO(3)}. \qedhere
\end{align}
\end{proof}

\begin{comment}
The single eigenvalue $\lambda \neq 1$ of the linearized Poincar\'{e}
map must then be
\[
   \abs{1-\lambda} = \abs{p_k^{-1}\p^2_p\tau_i(p_k)}.
\]
\end{comment}

\noindent
In the next section, we use the trace formula to prove our main
spectral rigidity theorems stated in the introduction.

\section{Proof of Spectral Rigidity}
\label{sec: spectral rigidity proof}

In this section we will prove that the length spectrum is rigid.  By
the trace formula this will imply that the spectrum of the
Laplace--Beltrami operator is rigid.

\subsection{Conjugacy conditions}

The following condition will be convenient:

\begin{definition}
\label{def:ccc}
We say that a $C^{1,1}$ sound speed $c$ satisfies the countable conjugacy condition
if there are only countably many radii $r\in(R,1)$ so that the endpoints of the
corresponding maximal geodesic $\gamma(r)$ are conjugate along that geodesic.
%In other words, the sound speed satisfies the countable conjugacy condition if every boundary point is conjugate to countably many boundary points modulo rotational symmetry.
\end{definition}

Assuming this condition will eventually imply that the length spectrum is
countable.
Throughout this paper ``countable'' includes empty and finite sets, but for the
sake of brevity we shall not write ``at most countable''.

The next condition is directly related to the clean intersection property discussed earlier.
We will return to this condition in section~\ref{sec:spec-rig-proof}.

\begin{definition}
\label{def:pcc}
We say that the radial wave speed $c$ satisfies the \emph{periodic conjugacy condition} if $\alpha(r)\in\pi\Q$ implies $\alpha'(r)\neq0$.
Restating geometrically, this means that if a broken ray is periodic, then the endpoints of a geodesic segment are not conjugate along the segment.
\end{definition}

\begin{remark}
\label{rmk:pcc}
Consider a periodic broken ray of radius $r$.
It satisfies the clean intersection property mentioned in section~\ref{sec:Poincare and trace} if and only if either $\alpha'(r)\neq0$ (leading to $\dim(C_T)=4$) or $\alpha'$ vanishes in a neighborhood of $r$ (leading to $\dim(C_T)=5$).

If the second option is true, it will fail at each endpoint of the maximal interval on which $\alpha'$ vanishes.
Since $\lim_{r\to1}\alpha(r)=0$ when the boundary is strictly convex, such an endpoint exists.
Therefore, assuming the Herglotz condition, the clean intersection property for all periodic broken rays is equivalent with the periodic conjugacy condition of definition~\ref{def:pcc}.
We point out that the function $\alpha$ is not well defined and the dimension of the fixed point set can be $3$ if the Herglotz condition is violated.
\end{remark}

\subsection{Conditions for periodicity}

A geodesic can be extended into a broken ray.
The geodesic segments of a broken ray are rotations of each other.
It is easy to see that the broken ray corresponding to the geodesic $\gamma(r)$ is
periodic if and only if $\alpha(r)\in\pi\Q$.
We want to understand the set of radii $r\in(R,1)$ for which this is the case.

\begin{lemma}
\label{lma:ncp-implies-density}
Let $P\subset(R,1)$ be the set of radii for which the corresponding broken ray is
periodic.
Let $C\subset(R,1)$ be the set of radii $r$ for which the endpoints of the geodesic
$\gamma(r)$ are conjugate along $\gamma(r)$.
\begin{itemize}
	\item In fact $C=\{r\in(R,1);\alpha'(r)=0\}$.
	\item If $C$ has empty interior, then $P$ is dense.
\end{itemize}
\end{lemma}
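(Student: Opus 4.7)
The argument splits into (i) the characterization $C=\{r\in(R,1):\alpha'(r)=0\}$ and (ii) the density of $P$ assuming $C$ has empty interior.

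For (i), the idea is to reparametrize maximal geodesics by their angular momentum and read off conjugacy from the derivative of the exit map. Fix the boundary point $x_0=(1,0)\in\partial M$ and, for each $p$ in the diving range, let $\gamma_p$ be the inward geodesic from $x_0$ with angular momentum $p$. By spherical symmetry and the Herglotz condition, the map $p\mapsto r(p)$ to the tip radius is a $C^1$ diffeomorphism with $r'(p)>0$, and after a rotation $\gamma_p$ is the geodesic $\gamma(r(p))$ with one endpoint placed at $x_0$. The exit angle along $\partial M$ is therefore
\begin{equation*}
E(p)=2\alpha(r(p)),\qquad E'(p)=2\alpha'(r(p))\,r'(p).
\end{equation*}

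Next I would extract a Jacobi field $J(t)=\partial_p\gamma_p(t)|_{p=p_0}$ along $\gamma=\gamma_{p_0}$; since the starting point is fixed, $J(0)=0$. In two dimensions, decompose $J(t)=f(t)\dot\gamma(t)+j(t)n(t)$ with $n$ the unit normal to $\dot\gamma$; the tangential component satisfies $f''=0$ with $f(0)=0$, and the normal component satisfies the scalar Jacobi equation $j''+Kj=0$ with $j(0)=0$ and $j'(0)\neq 0$ (because varying $p$ really does vary the initial direction). Accounting for the variation $L'(p_0)$ of the exit time yields
\begin{equation*}
E'(p_0)=\bigl(f(L)+L'(p_0)\bigr)\dot\gamma(L)+j(L)n(L)\in T_{E(p_0)}\partial M.
\end{equation*}
Transversality of $\gamma$ to $\partial M$ at the exit (which holds for $r_0<1$ under the Herglotz condition) means that $T_{E(p_0)}\partial M$ is a one-dimensional subspace not containing $\dot\gamma(L)$, so $E'(p_0)$ is a scalar multiple of a fixed tangent vector to $\partial M$, the scalar being proportional to $j(L)$. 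Hence $E'(p_0)=0\Leftrightarrow j(L)=0$. Since the tangential Jacobi field $f(t)=at$ with $f(0)=f(L)=0$ forces $a=0$, the condition $j(L)=0$ with $j(0)=0$ and $j'(0)\neq0$ is exactly conjugacy of the two endpoints along $\gamma$. Combined with $E'(p)=2\alpha'(r(p))r'(p)$ and $r'(p)>0$, this gives $C=\{r\in(R,1):\alpha'(r)=0\}$.

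For (ii), by lemma~\ref{lma:alpha-L-regularity} the function $\alpha$ is $C^1$ on $(R,1)$. Suppose $C$ has empty interior. Given any nonempty open subinterval $(a,b)\subset(R,1)$, the complement of $C$ meets $(a,b)$, so there exists $r_0\in(a,b)$ with $\alpha'(r_0)\neq0$. By continuity $\alpha'$ keeps a constant sign on a neighborhood of $r_0$ in $(a,b)$, so $\alpha$ is strictly monotone there and its image is a nondegenerate interval, which necessarily meets $\pi\Q$. Any preimage of a point of $\pi\Q\cap\alpha((a,b))$ lies in $P\cap(a,b)$, establishing density of $P$.

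\textbf{Main obstacle.} The subtle step is part (i): identifying $E'(p)=0$ with conjugacy of the endpoints along $\gamma$. It requires (a) transversality of $\gamma$ to $\partial M$ at the exit and (b) that the variation $J$ has $j'(0)\neq0$, so that $J$ is a genuinely nontrivial normal Jacobi field. Both ingredients rest on the Herglotz condition, which ensures that $r(p)$ is a diffeomorphism with $r'(p)>0$ and that the tip is an interior turning point rather than a point of tangency with $\partial M$.
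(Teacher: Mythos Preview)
Your argument is correct and rests on the same idea as the paper's---relate $\alpha'(r)=0$ to the vanishing of a Jacobi field at the endpoints---but you parametrize the variation differently. The paper varies the tip radius $r$ while keeping the tip at $\theta=0$, so both endpoints move symmetrically; the resulting Jacobi field has boundary value $\alpha'(r)$ in the tangential direction (and zero normal component after rescaling all geodesics to a common interval), and the converse direction then uses the two-dimensionality plus symmetry to identify any endpoint-vanishing Jacobi field with this variation. You instead fix one boundary endpoint and vary the angular momentum $p$, which gives $J(0)=0$ automatically and places you in the standard exponential-map setup for conjugate points; once transversality at the exit and $j'(0)\neq0$ are checked (both genuine uses of the Herglotz condition, as you note), the equivalence $E'(p)=0\Leftrightarrow j(L)=0$ is immediate. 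Your route handles the converse direction more cleanly; the paper's route reads off $\alpha'(r)$ as the endpoint displacement more directly.

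For part (ii) the two arguments are logically equivalent: the paper argues by contradiction (if $P$ misses an interval then $\alpha$ is constant there by continuity, so $\alpha'$ vanishes on an open set and $C$ has interior), while you argue directly that $\alpha'\neq0$ somewhere in any given interval forces the image of $\alpha$ to contain a nondegenerate interval and hence a point of $\pi\Q$.
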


\begin{proof}
The radius $r\in(R,1)$ parametrizes a family of geodesics.
By rescaling the speed (from the originally assumed unit speed), we may assume that
all geodesics are parametrized by $[-1,1]$.
Differentiating the geodesic $\gamma(r)$ with respect to $r$ gives a non-trivial
Jacobi field, a variation of a geodesic $\gamma(r)$.
The value of the Jacobi field at the endpoints of the geodesic describes the
movement of the endpoints of the geodesic under variation.
On the other hand, $\alpha(r)$ gives the endpoint of the geodesic.
Therefore the tangential component of the Jacobi field at the endpoint is
$\alpha'(r)$.
It follows from the reparametrization that the component normal to the boundary
vanishes.
Thus if $\alpha'(r)=0$, then the endpoints of $\gamma(r)$ are conjugate along
$\gamma(r)$.

On the other hand, if the endpoints are conjugate, then there is a
non-trivial Jacobi field vanishing at the endpoints. In dimension two
there can only be a one-dimensional space of such Jacobi fields, so
the Jacobi field in question must be symmetric in the time parameter
of the geodesic. Then we can identify the Jacobi field as
corresponding to a variation of the parameter $r$.  Combining this
with the previous observation shows that
\begin{equation}
r\in C
\iff
\alpha'(r)=0.
\end{equation}
This proves the first claim.

Let $R<a<b<1$. To prove the second claim, it suffices to produce
$r\in(a,b)$ so that $\alpha(r)\in\pi\Q$; see the discussion right
before the statement of this lemma.  For a contradiction, assume that
$\alpha(r)\notin\pi\Q$ for all $r\in(a,b)$. Since $\alpha$ is
continuous, this implies that $\alpha$ is constant on $(a,b)$. Thus
$\alpha'$ vanishes on $(a,b)$, so $C$ has interior -- a contradiction.
\end{proof}

\begin{lemma}
\label{lma:ccc}
If the sound speed $c\in C^{1,1}$ satisfies the countable conjugacy
condition (definition~\ref{def:ccc}) and the Herglotz condition, then
$C$ and $P$ are countable, $C$ is closed and $P$ is dense in $(R,1)$.
\end{lemma}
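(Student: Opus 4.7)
The plan is to derive each of the four claims more or less directly from lemma~\ref{lma:ncp-implies-density} together with the $C^1$ regularity of $\alpha$ (lemma~\ref{lma:alpha-L-regularity}). By the first part of lemma~\ref{lma:ncp-implies-density}, the set $C$ coincides with the set of radii $r\in(R,1)$ whose associated geodesic $\gamma(r)$ has conjugate endpoints. Thus the countable conjugacy condition gives immediately that $C$ is countable, and since $\alpha\in C^1((R,1))$ under the Herglotz hypothesis, the set $C=(\alpha')^{-1}(\{0\})$ is closed as the preimage of a closed set under a continuous map.

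Once $C$ is countable it has empty interior, so the second part of lemma~\ref{lma:ncp-implies-density} yields that $P$ is dense in $(R,1)$. It remains only to show that $P$ is countable. The idea is that outside $C$, the map $\alpha$ is locally strictly monotone and hence can hit each rational multiple of $\pi$ only sparsely.

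More precisely, since $C\subset(R,1)$ is closed, its complement $(R,1)\setminus C$ is open, so it decomposes as a countable disjoint union of open intervals $I_k$. On each $I_k$, the continuous function $\alpha'$ never vanishes and the interval is connected, so $\alpha'$ has constant sign throughout $I_k$. Therefore $\alpha$ is strictly monotone on $I_k$ and hence injective there; consequently $\alpha^{-1}(\{q\})\cap I_k$ has at most one element for every $q\in\pi\Q$. Since $\pi\Q$ is countable, $P\cap I_k=\alpha^{-1}(\pi\Q)\cap I_k$ is countable, and a countable union of countable sets gives that $P\setminus C$ is countable. Combining this with the fact that $P\cap C\subseteq C$ is also countable finishes the argument that $P$ is countable.

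There is no real obstacle in this plan: the only non-formal step is the passage from ``$C$ countable'' to ``$P$ countable'', and the connected-component decomposition of $(R,1)\setminus C$ together with strict monotonicity of $\alpha$ on each component handles it. The $C^1$ regularity of $\alpha$ guaranteed by lemma~\ref{lma:alpha-L-regularity} (which requires the Herglotz condition) is what makes $C$ closed and guarantees the sign of $\alpha'$ is constant on each component of its complement.
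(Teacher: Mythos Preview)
Your proof is correct and follows the same overall structure as the paper's: the countability and closedness of $C$, and the density of $P$, are handled identically via the countable conjugacy condition, continuity of $\alpha'$, and lemma~\ref{lma:ncp-implies-density}.

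The only genuine difference is in the argument that $P$ is countable. You decompose $(R,1)\setminus C$ into countably many open intervals on which $\alpha'$ has constant sign, so that $\alpha$ is injective on each and hits each $q\in\pi\Q$ at most once there; then you add back $P\cap C\subset C$. The paper instead argues by contradiction: if $P$ were uncountable, then by pigeonhole some level set $\alpha^{-1}(\{q\})$ would be uncountable, hence would have uncountably many accumulation points, and $\alpha'$ vanishes at every accumulation point of a level set, forcing $C$ to be uncountable. Both arguments are elementary; yours is a direct counting argument, while the paper's is a slightly slicker contrapositive that avoids explicitly decomposing the complement of $C$.
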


\begin{proof}
The countable conjugacy condition directly states that $C$ is countable.
Therefore $C$ cannot have interior, and density of $P$ follows from
lemma~\ref{lma:ncp-implies-density}.
Since $\alpha'$ is continuous, the preimage of zero under it, the set $C$, is
closed.

It remains to show that $P$ is countable. If it was uncountable, some
level set of $\alpha$ would be uncountable. An uncountable set has
uncountably many accumulation points and $\alpha'$ vanishes at every
accumulation point of a level set of $\alpha$.  This implies that $C$
is uncountable, which is impossible.
\end{proof}

\subsection{Length spectral rigidity}
\label{sec:lsp-rigidity}

The length spectrum of a manifold $M$ with boundary is the set of
lengths of all periodic broken rays on $M$.  If $M$ is a spherically
symmetric manifold as described above, we may choose whether or not we
include the rays that reflect on the inner boundary $r=R$.  If the
radial sound speed is $c$, we denote the length spectrum without these
interior reflections by $\lsp(c)$ and the one with these reflections
by $\lsp'(c)$. If the inner radius is zero ($R=0$), the manifold is
essentially a ball and the two kinds of length spectra coincide.

For clarity, we state the following three length spectral rigidity
theorems separately.

\begin{theorem}
\label{thm:lsp-rig-diving}
Let $B=\bar B(0,1)\setminus \bar B(0,R)\subset\R^n$, $n\geq2$ and
$R\geq0$, be an annulus (or a ball if $R=0$).  Fix $\eps>0$ and let
$c_\tau$, $\tau\in(-\eps,\eps)$, be a $C^{1,1}$ function
$[R,1]\to(0,\infty)$ satisfying the Herglotz condition and the
countable conjugacy condition and depending $C^1$-smoothly on the
parameter $\tau$.  If $R=0$, we assume $c_\tau'(0)=0$.  If
$\lsp(c_\tau)=\lsp(c_0)$ for all $\tau\in(-\eps,\eps)$, then
$c_\tau=c_0$ for all $\tau\in(-\eps,\eps)$.
\end{theorem}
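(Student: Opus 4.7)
The plan is to reduce length spectral rigidity to the injectivity of the periodic broken ray transform on radial functions, as proved in~\cite{dHI:radial-xrt}. The first step is to identify a large family of periodic broken rays that remain periodic under the deformation $c_\tau$. Parametrize maximal geodesics by their radius $r \in (R,1)$; the corresponding broken ray is periodic precisely when $\alpha_\tau(r) \in \pi\Q$. Fix $\tau_0 \in (-\eps,\eps)$ and pick $r_0$ in the periodic set $P_{\tau_0}$ with $\alpha'_{\tau_0}(r_0) \neq 0$. Since $\alpha_\tau$ depends $C^1$-smoothly on $(\tau,r)$, the implicit function theorem produces a $C^1$ family $\tau \mapsto r(\tau)$ with $r(\tau_0) = r_0$ and $\alpha_\tau(r(\tau)) = \alpha_{\tau_0}(r_0)$, yielding a $C^1$ family of periodic broken rays $\gamma_\tau$ of the same combinatorial type (same number of segments, same type of reflection). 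By lemma~\ref{lma:ccc} the set of admissible $r_0$ is dense in $(R,1)$.

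Next I would apply the first variation formula for the length of a broken ray. Because every segment is a rotation of $\gamma_\tau(r(\tau))$ and the reflection condition is preserved by spherical symmetry, the endpoint contributions cancel at the bounces, and differentiating in $\tau$ gives
\[
\left.\frac{d}{d\tau}\right|_{\tau=\tau_0} L(\gamma_\tau) \;=\; \tfrac{1}{2} \int_{\gamma_{\tau_0}} \left.\frac{\partial}{\partial\tau}\right|_{\tau=\tau_0}\! c_\tau^{-2}(|x|) \, d\ell,
\]
which is precisely the periodic broken ray transform $I$ of the radial function $f(|x|) \coloneqq \tfrac{1}{2}\,\partial_\tau c_\tau^{-2}|_{\tau=\tau_0}$ evaluated at the orbit of radius $r_0$. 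This is the well-known first variation for geodesics, and the extension to broken rays is identical once the reflection contributions are accounted for, as in~\cite{IS:brt-pde-1obst}.

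The hypothesis $\lsp(c_\tau) = \lsp(c_0)$ forces this derivative to vanish. Indeed, $\tau \mapsto L(\gamma_\tau)$ is continuous and takes values in the set $\lsp(c_0)$, which is countable by lemma~\ref{lma:ccc}, so it is locally constant. Therefore $(If)(r_0) = 0$ for every $r_0$ in a dense subset of $(R,1)$. The injectivity result of~\cite{dHI:radial-xrt} for the periodic broken ray transform on spherically symmetric manifolds satisfying the Herglotz condition then yields $f \equiv 0$. (In the ball case $R = 0$, the assumption $c'_\tau(0) = 0$ is precisely what is needed for the radial variation to extend smoothly through the origin and enter the hypotheses of that injectivity theorem.) Since $\tau_0$ was arbitrary, $\partial_\tau c_\tau^{-2} \equiv 0$ on all of $[R,1] \times (-\eps,\eps)$, so $c_\tau = c_0$ for every $\tau$.

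The main obstacle is the first step: producing a dense family of stable periodic broken rays and justifying the first variation under $C^{1,1}$ regularity in the presence of reflections at both boundaries. The essential leverage is lemma~\ref{lma:ccc}, which provides simultaneously the density of periodic radii and the generic nondegeneracy $\alpha'_{\tau_0} \neq 0$ needed for the implicit function theorem; without the countable conjugacy condition, the periodic radii could be a discrete set and the broken ray transform data would be too sparse to invoke the injectivity result.
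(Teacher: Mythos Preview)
Your proposal is correct and follows essentially the same route as the paper: produce stable periodic broken rays via the implicit function theorem at radii where $\alpha'\neq0$, use countability of the length spectrum to force the length to be constant in $\tau$, apply the first variation formula (citing~\cite{IS:brt-pde-1obst}) to get vanishing of the periodic broken ray transform of $\partial_\tau c_\tau^{-2}$, and conclude via the Abel-type injectivity from~\cite{dHI:radial-xrt}. The only imprecision is that lemma~\ref{lma:ccc} gives density of $P_{\tau_0}$ but not directly of $P_{\tau_0}\setminus C_{\tau_0}$; the paper records this separately (lemma~\ref{lma:S-dense}), and the missing step is just that $C_{\tau_0}$, being closed and countable, has empty interior.
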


\begin{theorem}
\label{thm:lsp-rig-reflecting}
Let $B=\bar B(0,1)\setminus B(0,R)\subset\R^n$, $n\geq2$ and $R>0$, be
an annulus.  Fix $\eps>0$ and let $c_\tau$, $\tau\in(-\eps,\eps)$, be
a $C^{1,1}$ function $[R,1]\to(0,\infty)$ satisfying the Herglotz
condition and the countable conjugacy condition and depending
$C^1$-smoothly on the parameter $\tau$.  If $\lsp'(c_\tau)=\lsp'(c_0)$
for all $\tau\in(-\eps,\eps)$, then $c_\tau=c_0$ for all
$\tau\in(-\eps,\eps)$.
\end{theorem}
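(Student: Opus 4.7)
The plan is to parallel the proof of theorem~\ref{thm:lsp-rig-diving} while exploiting the richer family of periodic broken rays (including those reflecting at the inner boundary $r=R$) that is encoded in $\lsp'$. It will suffice to show that $\left.\Der{\tau}c_\tau\right|_{\tau=\tau_0}=0$ for every $\tau_0\in(-\eps,\eps)$; after translating the parameter we may assume $\tau_0=0$ and set $f\coloneqq\left.\Der{\tau}c_\tau^{-2}\right|_{\tau=0}$, a Lipschitz radial function on $[R,1]$.

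First, I would produce a dense family of stable periodic broken rays. A periodic broken ray for $c_0$ corresponds either to a diving ray (parametrized by its turning radius $r\in(R,1)$) or to a ray reflecting at $r=R$ (parametrized by a ray parameter $p\in(0,R/c_0(R))$), in both cases with angular advance between consecutive outer reflections equal to a rational multiple of $\pi$. Lemma~\ref{lma:alpha-L-regularity} and its reflecting analog provide $C^1$ dependence of the angular advance on the parameter and on $\tau$. The countable conjugacy condition, together with the natural extension to reflecting rays, implies by the Jacobi-field argument of lemma~\ref{lma:ncp-implies-density} that the set of parameters at which the angular-advance derivative vanishes is countable. Hence by lemma~\ref{lma:ccc} and its reflecting analog, the set of parameters giving periodic broken rays for $c_0$ is dense, and at each non-degenerate such parameter the implicit function theorem produces a $C^1$-family $\gamma_\tau$ of periodic broken rays for $c_\tau$, with angular advance matching that of $\gamma_0$.

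Next, I would apply the first variation of length. For such a $C^1$-family, a standard computation (as in~\cite{IS:brt-pde-1obst} in the related case of a single obstacle) yields
\begin{equation}
\Der{\tau}L(\gamma_\tau)\Big|_{\tau=0} = \half \int_{\gamma_0} f \, \dd\ell .
\end{equation}
Because $L(\gamma_\tau)\in\lsp'(c_\tau)=\lsp'(c_0)$, the latter is countable (by the reflecting analog of lemma~\ref{lma:ccc}), and $\tau\mapsto L(\gamma_\tau)$ is continuous on the connected interval $(-\eps,\eps)$, the length is forced to be constant in $\tau$. Its derivative therefore vanishes, so the periodic broken ray transform of $f$ is zero on a dense family of periodic broken rays in $\lsp'$.

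Finally, I would invoke the injectivity result of~\cite{dHI:radial-xrt} for the periodic broken ray transform on radial functions in the spherically symmetric annular setting, in the version that includes reflections at $r=R$. This yields $f\equiv 0$ on $[R,1]$, so $\left.\Der{\tau}c_\tau\right|_{\tau=0}=0$. Since $\tau_0$ was arbitrary and $(-\eps,\eps)$ is connected, $c_\tau$ is independent of $\tau$. The principal obstacle will be the first step: the countable conjugacy condition (definition~\ref{def:ccc}) is phrased for maximal geodesics in the diving sense, and one must establish the correct reflecting analog and verify the density and countability conclusions of lemmas~\ref{lma:ncp-implies-density} and~\ref{lma:ccc} for trajectories that reflect at $r=R$. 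Under the Herglotz condition a reflecting trajectory decomposes symmetrically into two halves, and the conjugacy of its endpoints should reduce to a non-conjugacy statement for each half, bringing the argument into line with the diving case.
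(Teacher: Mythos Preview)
Your proposal would work, but it takes a more laborious route than the paper and the obstacle you flag is in fact a non-issue. The paper does \emph{not} use the reflecting rays for the variation or for the injectivity step at all. Instead it writes $\lsp'(c_\tau)=\lsp(c_\tau)\cup H_\tau$, where $H_\tau$ is the set of lengths of periodic broken rays that do reflect at $r=R$, and then proves directly (lemma~\ref{lma:reflecting-spectrum}) that $H_\tau$ is countable. The key point of that lemma is an explicit computation: for a ray reflecting at the inner boundary, the angular advance as a function of the angular momentum $z$ satisfies
\[
\beta'(z)=\int_R^1\frac{1}{c(r)\rho(r)^2}\left(1-\Bigl(\tfrac{z}{\rho(r)}\Bigr)^2\right)^{-3/2}\der r>0,
\]
so $\beta$ is a homeomorphism onto its image and $\beta^{-1}(\pi\Q)$ is countable. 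No conjugacy hypothesis is needed for reflecting rays, so your ``principal obstacle'' (extending the countable conjugacy condition to the reflecting regime) simply does not arise.

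With $H_\tau$ countable, $\lsp'(c_\tau)$ is countable, and the paper then runs the argument of theorem~\ref{thm:lsp-rig-diving} verbatim using only the diving rays $r\in S_0$: their lengths lie in the fixed countable set $\lsp'(c_0)$, hence are constant in $\tau$, and the Abel-transform injectivity (lemmas~\ref{lma:pbrt-to-abel}--\ref{lma:abel}) for diving rays alone already forces $f\equiv0$. Your plan to also exploit reflecting rays in the injectivity step is harmless but unnecessary; the diving family is already dense and sufficient.
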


Notice that dimension is irrelevant for the statements; if the sound
speed is fixed, the length spectrum is independent of dimension.

The following theorem states that the same rigidity result is true for
any finite disjoint union of manifolds of the types given in
theorems~\ref{thm:lsp-rig-diving} and~\ref{thm:lsp-rig-reflecting}.

\begin{theorem}
\label{thm:lsp-rig-combo}
Let $N$ and $N'$ be non-negative integers so that $N+N'\geq1$.
Let $R_1,R_2,\dots,R_N\in[0,1)$ and $R_1',R_2',\dots,R_{N'}'\in(0,1)$ be any
numbers.
Let $n_1,n_2,\dots,n_N$ and $n_1',n_2',\dots,n_{N'}'$ be integers, each of them at
least $2$.
Fix $\eps>0$ and let $c_{i,\tau}\colon[R_i,1]\to(0,\infty)$ for $i=1,\dots,N$ and
$c'_{i,\tau}\colon[R_i',1]\to(0,\infty)$ for $i=1,\dots,N'$ be $C^{1,1}$ functions
satisfying the Herglotz condition and the countable conjugacy condition and
depending $C^1$-smoothly on the parameter $\tau$.
For every $i$ such that $R_i=0$, we assume
$\left.\Der{r}c_{i,\tau}(r)\right|_{r=0}=0$.
If the set
\begin{equation}
\bigcup_{i=1}^N\lsp(c_{i,\tau})
\cup
\bigcup_{i=1}^{N'}\lsp'(c'_{i,\tau})
\end{equation}
is the same for all $\tau\in(-\eps,\eps)$, then every sound speed
$c_{i,\tau}$ and $c'_{i,\tau}$ is independent of the parameter $\tau$.
\end{theorem}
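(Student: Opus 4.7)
The plan is to reduce to the argument used for the single-manifold length spectral rigidity theorems~\ref{thm:lsp-rig-diving} and~\ref{thm:lsp-rig-reflecting}; the only new observation is that lengths arising from different summands pool into a single countable set. Indeed, by lemma~\ref{lma:ccc} applied separately to each sound speed, every individual set $\lsp(c_{i,\tau})$ and $\lsp'(c'_{i,\tau})$ is countable under the Herglotz and countable conjugacy conditions, so the combined length spectrum
\[
S\coloneqq\bigcup_{i=1}^N\lsp(c_{i,\tau})\cup\bigcup_{i=1}^{N'}\lsp'(c'_{i,\tau})
\]
is a countable subset of $\R$ which, by hypothesis, does not depend on $\tau$.

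Fix one of the sound speeds, say $c_{i,\tau}$ on $[R_i,1]$. By lemma~\ref{lma:ccc} the set of periodic radii for $c_{i,0}$ is dense in $(R_i,1)$, and by the countable conjugacy condition all but countably many of these radii satisfy $\alpha'(r)\neq0$. For any such radius $r$ the implicit function theorem yields a $C^1$-family $\gamma_{r,\tau}$ of periodic broken rays on the Riemannian manifold $(\bar B(0,1)\setminus\bar B(0,R_i),c_{i,\tau}^{-2}e)$ defined for $\tau$ in a neighborhood of $0$, whose length $T_r(\tau)$ depends $C^1$-smoothly on $\tau$. Since $T_r(\tau)\in S$ for every $\tau$ while $S$ is countable and fixed, the continuous image $T_r((-\eps,\eps))$ is a connected countable subset of $\R$, hence a single point. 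In particular $T_r'(0)=0$.

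The standard first variation formula for arc length along periodic broken rays (as used in~\cite{IS:brt-pde-1obst}) identifies $T_r'(0)$ with the periodic broken ray transform of the radial function $f_i\coloneqq\left.\Der{\tau}c_{i,\tau}^{-2}\right|_{\tau=0}$ evaluated on $\gamma_{r,0}$. Therefore this broken ray transform of $f_i$ vanishes on the dense family of stable periodic radii, and by continuity of the ray-length and the transform on all periodic broken rays of manifold $i$. Appealing to the injectivity of the periodic broken ray transform on radial functions under the Herglotz condition from~\cite{dHI:radial-xrt} (the hypothesis $\left.\Der{r}c_{i,\tau}(r)\right|_{r=0}=0$ when $R_i=0$ is precisely what is needed there to invert the associated Abel-type transform at the origin), we conclude $f_i\equiv0$. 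Replaying the argument at each base parameter $\tau_0\in(-\eps,\eps)$ in place of $0$ gives $\Der{\tau}c_{i,\tau}^{-2}\equiv0$ on all of $(-\eps,\eps)\times[R_i,1]$, so $c_{i,\tau}=c_{i,0}$. The same proof applies verbatim to each $c'_{i,\tau}$, the only difference being that $\lsp'$ also counts rays reflecting at the inner boundary, and the injectivity result of~\cite{dHI:radial-xrt} treats this variant as well.

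The conceptual heart of the argument, and in some sense the main obstacle that must be explicitly checked, is the middle step: one must verify that stable periodic broken rays can be tracked smoothly through the family of deformations, so that $T_r$ is a genuine $C^1$ function and the first variation formula applies. The countable conjugacy condition is imposed precisely to guarantee a dense supply of such rays; without it, neither the countability of $S$ nor the density of stable periodic radii can be taken for granted. Once the vanishing $T_r'(0)=0$ is established, the rest of the proof is identical to the single-manifold case, and crucially the broken ray transform of $f_i$ is inverted on a single manifold at a time, so no cross-coupling between summands ever intervenes.
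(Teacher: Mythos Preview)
Your proposal is correct and follows essentially the same route as the paper: observe that each individual length spectrum is countable so the combined set $S$ is countable and fixed, then run the single-manifold argument (stable periodic radii, first variation, Abel inversion) on each summand separately. One minor citation point: countability of $\lsp'(c'_{i,\tau})$ is not a direct consequence of lemma~\ref{lma:ccc} alone but also needs lemma~\ref{lma:reflecting-spectrum} for the rays reflecting at the inner boundary; and in the $\lsp'$ case the paper still inverts using only the diving rays rather than a separate reflecting variant of the Abel transform.
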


\begin{remark}
It does not matter whether for every periodic broken ray only the
primitive period is included in the length spectrum, or of its all
integer multiples.  The proofs of the three theorems above are the
same in both cases.
\end{remark}

Even more might be true, and we propose the following conjecture.

\begin{conjecture}
Under certain geometric hypotheses, a spherically symmetric manifold
is uniquely determined by its length spectrum.
\end{conjecture}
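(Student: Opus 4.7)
The plan is to reduce theorem~\ref{thm:lsp-rig-combo} to the individual rigidity statements in theorems~\ref{thm:lsp-rig-diving} and~\ref{thm:lsp-rig-reflecting}, with the only new ingredient being a simple topological observation: a continuous curve valued in a countable subset of $\R$ must be constant. Under the hypothesis that $S_\tau\coloneqq\bigcup_{i=1}^{N}\lsp(c_{i,\tau})\cup\bigcup_{i=1}^{N'}\lsp'(c'_{i,\tau})$ is independent of $\tau$, this observation forces the length of each smoothly deforming periodic broken ray to be pointwise constant, even though $S_\tau$ mixes contributions from several different manifolds.

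First I would verify countability of $S_\tau$ for every $\tau\in(-\eps,\eps)$: by lemma~\ref{lma:ccc} each of the finitely many $\lsp(c_{i,\tau})$ and $\lsp'(c'_{i,\tau})$ is countable, so their finite union is countable. By hypothesis $S_\tau=S_0\eqqcolon S$ for all $\tau$. Next, fix an index $i\in\{1,\dots,N\}$ and a generic periodic broken ray $\gamma_0$ on $(B_i,c_{i,0})$ of radius $r_0\in P\setminus C$ in the notation of lemma~\ref{lma:ncp-implies-density}, so that $\alpha'(r_0)\neq 0$. The same implicit-function-type argument used in the proofs of theorems~\ref{thm:lsp-rig-diving} and~\ref{thm:lsp-rig-reflecting} then produces a $C^1$ family $\gamma_\tau$ of periodic broken rays on $(B_i,c_{i,\tau})$ with $\gamma_\tau|_{\tau=0}=\gamma_0$, whose length $\ell(\tau)$ depends continuously on $\tau$.

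Since $\ell(\tau)\in S$ for every $\tau$ and $S$ is countable, the continuous image $\ell((-\eps,\eps))$ is a connected subset of a countable subset of $\R$, hence a singleton, so $\ell'(0)=0$. By the first-variation formula for broken rays recalled in the outline of section~\ref{sec: spectral rigidity proof} (see also~\cite{IS:brt-pde-1obst}), this derivative equals the periodic broken ray transform of the variation $\left.\Der{\tau}c_{i,\tau}^{-2}\right|_{\tau=0}$ evaluated at the radius $r_0$. Since such generic radii form a dense subset of $(R_i,1)$ by lemma~\ref{lma:ccc}, the radial broken ray transform of this variation vanishes on a dense set, hence everywhere by continuity. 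The Abel-type injectivity result from~\cite{dHI:radial-xrt} then forces $\left.\Der{\tau}c_{i,\tau}^{-2}\right|_{\tau=0}=0$. Repeating the argument with an arbitrary basepoint $\tau_0\in(-\eps,\eps)$ in place of $0$ yields that $c_{i,\tau}$ is independent of $\tau$. The same reasoning applies verbatim to each $c'_{i,\tau}$ using $\lsp'$ and the reflecting length spectrum.

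The main subtlety I expect lies in the implicit-function step that produces the continuous family $\gamma_\tau$ from $\gamma_0$: this relies on the non-degeneracy $\alpha'(r_0)\neq 0$, which is supplied by combining the Herglotz condition with the countable conjugacy condition via lemma~\ref{lma:ccc}; the details are essentially those already present in the proofs of the two single-manifold theorems. Once this stability is in place, the combined-union hypothesis is handled by a purely formal device: countability of $S$ prevents any smoothly deforming length from migrating within $S$, so apparent ``crossings'' between contributions from different manifolds never actually produce a moving length. Thus theorem~\ref{thm:lsp-rig-combo} contributes no genuinely new analytic content beyond theorems~\ref{thm:lsp-rig-diving} and~\ref{thm:lsp-rig-reflecting}; its only novelty is the observation that a connected continuous image of $(-\eps,\eps)$ inside a countable subset of $\R$ is a single point.
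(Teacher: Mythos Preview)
There is a fundamental mismatch: the statement you were asked to address is a \emph{conjecture}, not a theorem, and the paper does not prove it. The conjecture asserts unique determination --- that if two spherically symmetric manifolds (satisfying suitable hypotheses) have the same length spectrum then they are isometric --- which is a strictly stronger claim than the deformation rigidity results of theorems~\ref{thm:lsp-rig-diving}, \ref{thm:lsp-rig-reflecting}, and~\ref{thm:lsp-rig-combo}. Those theorems only show that a $C^1$ one-parameter family $c_\tau$ sharing a common length spectrum must be constant in $\tau$; they say nothing about two sound speeds $c$ and $\tilde c$ that are not connected by such a family.

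Your proposal is in fact a correct sketch of the proof of theorem~\ref{thm:lsp-rig-combo}, and indeed matches the paper's own argument for that theorem essentially line for line: countability of the combined spectrum, then the stable-broken-ray and Abel-injectivity machinery applied manifold by manifold. But this does not touch the conjecture. The whole structure of your argument --- implicit function theorem producing a $C^1$ family $\gamma_\tau$, continuity of $\ell(\tau)$ forcing constancy inside a countable set, first variation yielding the broken ray transform of $\partial_\tau c_\tau^{-2}$ --- presupposes a smooth deformation parameter $\tau$ connecting the two metrics. For the conjecture there is no such $\tau$: you are handed two a priori unrelated sound speeds with equal length spectra, and the entire variational apparatus is unavailable. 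The paper explicitly flags the conjecture as open and remarks that its verification ``would imply that such a manifold is uniquely determined by its spectrum''; no proof is offered or claimed.
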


A verification of the conjecture would imply that such a
manifold is uniquely determined by its spectrum under some geometric assumptions.

%By the trace formula a positive answer would imply that such a
%manifold is uniquely determined by its spectrum, assuming the Herglotz
%condition.

\begin{remark}
Theorem~\ref{thm:lsp-rig-combo} may seem like an unnecessary
generalization of the two preceding theorems, but it has geophysical
significance. Consider a spherically symmetric model of the earth. It
essentially consists of three different parts, an inner ball and two
nested annuli. The full length spectrum of the Earth is the set of all
periodic orbits for the different polarized waves. The statement in
theorem~\ref{thm:lsp-rig-combo} (with $N=2$ and $N'=1$) is, however,
incomplete in the sense that the coupling between different
polarizations and transmission at boundaries are ignored.
However, this is the best toy model for which rigidity is currently known.
\end{remark}

Radial symmetry is an excellent approximation of the earth or planets
in general. This symmetry is not exact, and unfortunately our method
requires precise symmetry.  Assuming radial symmetry is not merely a
matter of technical convenience, but a truly necessary assumption.  A
key ingredient in the proof is that many periodic broken rays are
stable under deformations of the metric.  In spherical symmetry the
broken rays are only stable under deformations that preserve the
symmetry, otherwise they are typically unstable and the proof falls
apart.

The countable conjugacy condition should be a generic property of
sound speeds.  It is a technical assumption we do not like to make,
but it does not hinder the relevance for our planetary model.

The Herglotz condition is crucial for the geometry of the problem.
Without it the manifold would trap some geodesics inside and the
geometry of broken rays would be very different.  In the commonly used
Preliminary Reference Earth Model (PREM) both pressure and shear wave
speeds satisfy the Herglotz condition piecewise.  Due to the layered
structure of the Earth both wave speeds have jumps, whereas our result
assumes $C^{1,1}$ regularity.

\subsubsection{No inner reflections}

In this subsection we prove theorem~\ref{thm:lsp-rig-diving}.  All the
lemmas here are stated under the assumptions of the theorem. By
decreasing $\eps$ slightly we can assume that $c_\tau(1)$ is bounded
away from zero and infinity without any loss of generality.

Let $P_\tau\subset(R,1)$ be the set of radii for which the
corresponding broken ray -- which is unique up to rotations -- is
periodic with respect to the sound speed $c_\tau$.  A priori $P_\tau$
depends on $\tau$.  If $r\in P_\tau$, the corresponding broken ray has
$n_\tau(r)$ reflections and winds around the origin $m_\tau(r)$ times.
We choose all periodic broken rays to have minimal period, so the
natural numbers $n_\tau(r)$ and $m_\tau(r)$ are coprime.  If
$\alpha_\tau(r)$ is the angle we defined earlier (previously without
the dependence on $\tau$), we have the identity
\begin{equation}
\label{eq:pm=na}
\pi m_\tau(r)=n_\tau(r)\alpha_\tau(r).
\end{equation}

We denote the length of the periodic broken ray with radius $r$ by
$\ell_\tau(r)$. Simple geometrical considerations show that
$\ell_\tau(r)=2n_\tau(r)L_\tau(r)$, where $L_\tau(r)$ is defined like
$L(r)$ in~\eqref{eq:L}. We denote $\rho_\tau(r)=r/c_\tau(r)$. The Herglotz
condition states that $\rho_\tau'(r)>0$.

\begin{lemma}
\label{lma:uniform-bounds}
Assume $R>0$.
There is a constant $C>1$ so that
\begin{itemize}
\item $\frac1C<c_\tau(r)<C$,
\item $\frac1C(s-r)<\rho_\tau(s)-\rho_\tau(r)<C(s-r)$,
\item $\alpha_\tau(r)<C$ and
\item $L_\tau(r)<C$
%\item $\frac1C<L_\tau(r)/\alpha_\tau(r)<C$
\end{itemize}
for all $\tau\in(-\eps,\eps)$ and $r,s\in(R,1)$ with $s>r$.
\end{lemma}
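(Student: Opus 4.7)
The plan is to derive all four inequalities by a compactness-plus-Herglotz argument on the closed set $[R,1]\times[-\eps,\eps]$, after possibly shrinking $\eps$. Since the family $c_\tau$ depends $C^1$-smoothly on $\tau$, the map $(r,\tau)\mapsto c_\tau(r)$ is jointly continuous on this compact rectangle, hence attains a positive minimum and a finite maximum; this gives the first bullet point.

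For the second bullet, the $C^1$-smoothness of $\tau\mapsto c_\tau$ into $C^{1,1}([R,1])$ makes $(r,\tau)\mapsto\partial_r c_\tau(r)$ jointly continuous, so
\[
\rho_\tau'(r)=\frac{c_\tau(r)-rc_\tau'(r)}{c_\tau(r)^2}
\]
is continuous on $[R,1]\times[-\eps,\eps]$ and strictly positive everywhere by the Herglotz condition. Compactness then upgrades this pointwise positivity to a uniform two-sided bound $C^{-1}\leq\rho_\tau'(r)\leq C$, and integrating from $r$ to $s$ yields the claimed inequality for $\rho_\tau(s)-\rho_\tau(r)$.

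For $\alpha_\tau$ and $L_\tau$, I would use the identity $c_\tau^{-2}(r')-r'^{-2}p^2=r'^{-2}(\rho_\tau(r')^2-p^2)$ with $p=\rho_\tau(r)$ to rewrite \eqref{eq:L} and \eqref{eq:alpha} as
\[
L_\tau(r)=\int_r^1\frac{r'}{c_\tau(r')^2\sqrt{\rho_\tau(r')^2-p^2}}\,\der r',\qquad \alpha_\tau(r)=\int_r^1\frac{p}{r'\sqrt{\rho_\tau(r')^2-p^2}}\,\der r'.
\]
Substituting $u=\rho_\tau(r')$, which is legal because $\rho_\tau$ is strictly increasing by Herglotz, both integrals take the form $\int_p^{\rho_\tau(1)}h_\tau(u)(u^2-p^2)^{-1/2}\,\der u$, where the prefactor $h_\tau$ is uniformly bounded thanks to the first two bullets together with $R>0$. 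The explicit evaluation $\int_p^{\rho_\tau(1)}(u^2-p^2)^{-1/2}\der u=\operatorname{arccosh}(\rho_\tau(1)/p)$, combined with the uniform lower bound $p\geq R/\max c_\tau>0$ and a uniform upper bound on $\rho_\tau(1)$, gives the bounds on $\alpha_\tau$ and $L_\tau$.

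The delicate point is the uniform lower bound on $\rho_\tau'$: the Herglotz condition is only a pointwise positivity statement, and one must verify joint continuity of $\partial_r c_\tau(r)$ in $(r,\tau)$ from the $C^1$-in-$\tau$/$C^{1,1}$-in-$r$ hypothesis before passing to compactness. Everything else is a routine substitution together with the standard integrability of $(u^2-p^2)^{-1/2}$ at the turning-point endpoint $u=p$.
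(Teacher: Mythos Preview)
Your proof is correct and follows essentially the same strategy as the paper: compactness (after possibly shrinking $\eps$) gives uniform two-sided bounds on $c_\tau$ and on $\rho_\tau'$, and these feed into uniform bounds on the Abel-type integrals defining $\alpha_\tau$ and $L_\tau$. The only cosmetic difference is in the last step: the paper estimates the integrand directly by showing $(1-(\rho_\tau(r)/\rho_\tau(s))^2)^{-1/2}\approx(s-r)^{-1/2}$ and integrates, whereas you substitute $u=\rho_\tau(r')$ and invoke the closed form $\operatorname{arccosh}(\rho_\tau(1)/p)$; both dispatch the turning-point singularity in the same standard way.
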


\begin{proof}
It follows from the Herglotz condition that $r/c_\tau(r)\leq 1/c_\tau(1)$, whence
$c_\tau(r)\geq Rc_\tau(1)$.
Since $c_\tau(1)$ is uniformly bounded from below, the functions $c_\tau$ are all
uniformly bounded from below.
We assumed the sound speeds to be uniformly bounded from above in the theorem, so
for some constant $C>1$ we have $\frac1C<c_\tau(r)<C$ for all $\tau$ and $r$.

We write $LHS\lesssim RHS$ if there is a constant $C$ independent of $\tau$,
$r$ and $s$ so that $LHS<C\cdot RHS$.
By $LHS\approx RHS$ we mean $LHS\lesssim RHS\lesssim LHS$.

Assume $s>r$.
Since $c_\tau$ is $C^{1,1}$ and satisfies the Herglotz condition, we have
$0<\rho_\tau(s)-\rho_\tau(r)\lesssim s-r$.
On the other hand the minimum of $\rho_\tau'$ depends continuously on $\tau$ and is
always positive, so we have also $s-r\lesssim\rho_\tau(s)-\rho_\tau(r)$.
Thus $\rho_\tau(s)-\rho_\tau(r)\approx s-r$.

It follows from the previous observation that
\begin{equation}
%\label{eq:}
\left(1-\left(\frac{\rho_\tau(r)}{\rho_\tau(s)}\right)^2\right)^{-1/2}
\approx
(s-r)^{-1/2}.
\end{equation}
Combining this with equations~\eqref{eq:alpha} and~\eqref{eq:L} gives the desired
uniform estimates for $\alpha_\tau$ and $L_\tau$.
\end{proof}

\begin{lemma}
\label{lma:R=0}
If $R=0$, then $\lim_{r\to0}\alpha_\tau(r)=\frac\pi2$ and
$\lim_{r\to0}L_\tau(r)=\int_0^1c_\tau(r)^{-1}\der r$.
\end{lemma}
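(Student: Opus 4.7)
The plan is to reparametrize both integrals in~\eqref{eq:L} and~\eqref{eq:alpha} by the Herglotz-monotone function $\rho_\tau(r')=r'/c_\tau(r')$, which, under the Herglotz condition, is a $C^1$-diffeomorphism $[0,1]\to[0,Q]$ with $Q=\rho_\tau(1)$; denote its inverse by $\Phi$. Setting $p=\rho_\tau(r)$ and $\eta(s)=c_\tau(s)\rho_\tau'(s)=1-sc_\tau'(s)/c_\tau(s)$, a short computation using $\Phi(u)/c_\tau(\Phi(u))=u$ turns~\eqref{eq:L} and~\eqref{eq:alpha} into the clean forms
\begin{equation}
L_\tau(r)=\int_p^Q\frac{u\,h(u)\,\der u}{\sqrt{u^2-p^2}},
\qquad
\alpha_\tau(r)=\int_p^Q\frac{p\,h(u)\,\der u}{u\sqrt{u^2-p^2}},
\end{equation}
where $h(u)=1/\eta(\Phi(u))$. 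The Herglotz condition guarantees $\eta>0$ on $[0,1]$, and $\eta(0)=1$, so $h$ is a positive continuous function on $[0,Q]$ with $h(0)=1$.

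For $L_\tau$, I would substitute $s=u^2-p^2$ to obtain
\begin{equation}
L_\tau(r)=\tfrac{1}{2}\int_0^{Q^2-p^2}\frac{h(\sqrt{p^2+s})}{\sqrt{s}}\,\der s,
\end{equation}
which is dominated by $h_{\max}/(2\sqrt{s})$, an integrable function on $(0,Q^2)$. Dominated convergence then yields the limit $\tfrac{1}{2}\int_0^{Q^2}h(\sqrt{s})/\sqrt{s}\,\der s=\int_0^Q h(u)\,\der u$. Unwinding with $u=\rho_\tau(r)$ and noting $\rho_\tau'(r)/\eta(r)=1/c_\tau(r)$ gives the claimed value $\int_0^1 c_\tau(r)^{-1}\,\der r$.

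For $\alpha_\tau$ the direct substitution $s=u^2-p^2$ is misleading because, as $p\to0$, the mass of the integrand concentrates at $u=p$; this is the main technical point. The fix is to absorb the scale $p$ into the variable by rescaling $s=p^2\tau$, equivalently $u=p\sqrt{1+\tau}$. This transforms the integral into
\begin{equation}
\alpha_\tau(r)=\tfrac{1}{2}\int_0^{Q^2/p^2-1}\frac{h(p\sqrt{1+\tau})}{(1+\tau)\sqrt{\tau}}\,\der\tau,
\end{equation}
with a $p$-independent integrable majorant $h_{\max}/((1+\tau)\sqrt{\tau})$ on $(0,\infty)$; the upper limit tends to $+\infty$ as $p\to0$. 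Since $h$ is continuous at $0$ with $h(0)=1$, the integrand converges pointwise to $1/((1+\tau)\sqrt{\tau})$, and dominated convergence delivers
\begin{equation}
\lim_{r\to0}\alpha_\tau(r)=\tfrac{1}{2}\int_0^\infty\frac{\der\tau}{(1+\tau)\sqrt{\tau}}=\tfrac{\pi}{2},
\end{equation}
the last equality following from the substitution $\tau=v^2$. The same argument works uniformly in the parameter $\tau$, since all bounds depend only on the uniform lower bound on $\eta$ and the uniform continuity of $h$ at $0$, which are guaranteed by the standing Herglotz assumption and the $C^1$-dependence on $\tau$.
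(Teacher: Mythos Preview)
Your argument is correct. The reparametrization by $u=\rho_\tau(r')$ and the two substitutions you chose (one for $L_\tau$, a scale-absorbing one for $\alpha_\tau$) are exactly the right moves, and the dominated convergence steps go through since $h=1/\eta$ is continuous and bounded on $[0,Q]$ by the Herglotz condition. One purely notational issue: you reuse $\tau$ as the integration variable in the $\alpha_\tau$ computation while $\tau$ already names the deformation parameter; rename it to avoid confusion.

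The paper's proof is entirely different and much shorter: it is a one-line geometric remark that as $r\to0$ the maximal geodesic $\gamma(r)$ degenerates to a diameter of the ball (a radial geodesic through the origin), for which the half-angle between endpoints is evidently $\pi/2$ and the half-length is $\int_0^1 c_\tau^{-1}\,\der r$. No analysis of the integral formulas is performed. Your approach has the advantage of being fully rigorous and self-contained from the explicit formulas~\eqref{eq:L} and~\eqref{eq:alpha}, and it makes transparent where the Herglotz condition and the value $h(0)=1$ enter; the paper's approach is quicker but leaves the continuity of $\alpha_\tau$ and $L_\tau$ at $r=0$ to geometric intuition.
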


\begin{proof}
In the limit the corresponding maximal geodesic tends to a diameter of the ball.
This limit is a radial geodesic and it is geometrically obvious that the angle and
length corresponding to it are the limits stated above.
\end{proof}

\begin{lemma}
\label{lma:lsp-countable}
The length spectrum $\lsp(c_\tau)$ is countable.
\end{lemma}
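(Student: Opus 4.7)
The plan is to deduce countability of $\lsp(c_\tau)$ directly from the structural results already in hand, namely lemma~\ref{lma:ccc}. Recall that every periodic broken ray has a well-defined radius $r\in(R,1)$, and that in spherical symmetry two periodic broken rays with the same radius differ only by a rotation and therefore have the same length. Hence the map $r\mapsto\ell_\tau(r)=2n_\tau(r)L_\tau(r)$ from the set $P_\tau$ of periodic radii to $\lsp(c_\tau)$ is surjective onto the set of primitive periods.

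First, I invoke lemma~\ref{lma:ccc}, whose hypotheses (Herglotz and the countable conjugacy condition) are exactly what we are assuming. It delivers that $P_\tau$ is countable. Consequently its image under $r\mapsto\ell_\tau(r)$, which by the previous paragraph contains every primitive period in $\lsp(c_\tau)$, is countable as well.

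Second, I address the convention regarding integer multiples. As noted in the remark following theorem~\ref{thm:lsp-rig-combo}, the length spectrum may be understood either as the set of primitive periods or as that set together with all positive integer multiples of each primitive period. In the former case we are already done by the previous paragraph; in the latter case, one writes
\begin{equation}
\lsp(c_\tau)=\bigcup_{k=1}^{\infty}\,\{k\ell_\tau(r):r\in P_\tau\},
\end{equation}
which is a countable union of countable sets and hence countable.

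The step that carries the content is the first one, and there is no real obstacle here beyond recognizing that lemma~\ref{lma:ccc} has already done the work at the level of radii; the remaining observations are purely set-theoretic. The only subtlety worth flagging is that one must use the rotational symmetry to make sure that distinct periodic broken rays with the same radius do not produce new lengths, but this is immediate from the definitions of $L_\tau$ and $n_\tau$, which depend only on $r$.
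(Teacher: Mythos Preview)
Your proof is correct and follows essentially the same approach as the paper: invoke lemma~\ref{lma:ccc} to conclude that $P_\tau$ is countable, and then observe that $\lsp(c_\tau)$ is the image of $P_\tau$ under $\ell_\tau$. Your additional remarks on rotational symmetry and the primitive-versus-multiples convention are sound but more detailed than the paper's one-line argument, which simply writes $\lsp(c_\tau)=\{\ell_\tau(r);r\in P_\tau\}$.
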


\begin{proof}
This follows from countability of $P_\tau$ given by
lemma~\ref{lma:ccc}, since $\lsp(c_\tau)=\{\ell_\tau(r);r\in
P_\tau\}$.
\end{proof}

We denote by $S_\tau$ the set of radii $r\in(R,1)$ for which
$\alpha(r)\in\pi\Q$ and $\alpha'(r)\neq0$.  Radii in this set will
correspond to stable broken rays as we shall see in
lemma~\ref{lma:stable-br} below.

\begin{lemma}
\label{lma:S-dense}
The set $S_\tau$ is countable and dense in $(R,1)$.
\end{lemma}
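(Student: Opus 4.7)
The plan is to extract both statements from lemma~\ref{lma:ccc} and the structure of $\alpha_\tau$. Recall from lemma~\ref{lma:ccc} that, under the standing assumptions, the set $C_\tau\coloneqq\{r\in(R,1);\alpha_\tau'(r)=0\}$ is countable and closed, while the set $P_\tau\coloneqq\{r\in(R,1);\alpha_\tau(r)\in\pi\Q\}$ is countable and dense. Since $S_\tau=P_\tau\setminus C_\tau$, countability of $S_\tau$ is immediate from countability of $P_\tau$. All the work is in density.

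For density, I would argue as follows. Fix an arbitrary nonempty open interval $(a,b)\subset(R,1)$ and produce $r\in S_\tau\cap(a,b)$. Because $C_\tau$ is closed and countable, its complement $(R,1)\setminus C_\tau$ is open and dense, so $(a,b)\setminus C_\tau$ contains some nonempty open subinterval $(a',b')$. On $(a',b')$ the derivative $\alpha_\tau'$ is continuous and nowhere zero, hence has constant sign, so $\alpha_\tau$ is strictly monotone on $(a',b')$. Therefore $\alpha_\tau((a',b'))$ is a nonempty open interval of $\R$, and by density of $\pi\Q$ in $\R$ it contains some value of the form $\pi q$ with $q\in\Q$. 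The (unique) preimage $r\in(a',b')$ of $\pi q$ satisfies $\alpha_\tau(r)\in\pi\Q$ and $\alpha_\tau'(r)\neq0$, so $r\in S_\tau\cap(a,b)$, proving density.

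There is no serious obstacle here; the lemma is a straightforward consequence of lemma~\ref{lma:ccc} together with the intermediate value theorem applied on any $\alpha_\tau'$-nonvanishing subinterval. The only point to be careful about is the passage from ``$(a,b)\setminus C_\tau$ is open and nonempty'' to ``it contains an open subinterval on which $\alpha_\tau'$ has constant sign''; this uses that $C_\tau$ is closed (so its complement is open) and the continuity of $\alpha_\tau'$ guaranteed by lemma~\ref{lma:alpha-L-regularity}.
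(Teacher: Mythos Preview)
Your proof is correct and essentially matches the paper's: both write $S_\tau=P_\tau\setminus C_\tau$ and invoke lemma~\ref{lma:ccc} for the relevant properties of $P_\tau$ and $C_\tau$. The paper's density step is the terse observation that a dense set minus a closed countable (hence nowhere dense) set remains dense, whereas you spell this out by first passing to a $C_\tau$-free subinterval and using monotonicity of $\alpha_\tau$ there to hit a value in $\pi\Q$---a more explicit but equivalent argument.
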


\begin{proof}
Let $C_\tau$ denote the set of zeros of $\alpha_\tau'$.
(We used the same notation in lemma~\ref{lma:ncp-implies-density}.)
By lemma~\ref{lma:ccc} we know that $P_\tau$ is dense and countable and $C_\tau$ is
closed and countable.
Therefore $S_\tau=P_\tau\setminus C_\tau$ is dense and countable.
\end{proof}

The next lemma shows that periodic broken rays are stable under variations of the
radial sound speed.
Periodic broken rays on a highly symmetric manifold are typically not stable under
all variations of the metric, but we are only looking at variations that preserve
the symmetry.

\begin{lemma}
\label{lma:stable-br}
For every $r\in S_0$ there is $\delta\in(0,\eps)$ a $C^1$ function
$\phi\colon(-\delta,\delta)\to(R,1)$ so that
\begin{itemize}
\item $\phi(0)=r$,
\item $\alpha_\tau(\phi(\tau))=\alpha_0(r)$ (and thus $\phi(\tau)\in P_\tau$),
\item $n_\tau(\phi(\tau))=n_0(r)$ and
\item $m_\tau(\phi(\tau))=m_0(r)$
\end{itemize}
for all $\tau\in(\delta,\delta)$.
In particular, $\tau\mapsto\ell_\tau(\phi(\tau))$ is differentiable.
\end{lemma}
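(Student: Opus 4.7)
The plan is to apply the implicit function theorem to the equation $\alpha_\tau(r')=\alpha_0(r)$, viewing $r'$ as a function of $\tau$ near $(r',\tau)=(r,0)$. By the definition of $S_0$, we have $\alpha_0'(r)\neq0$, which is exactly the non-degeneracy needed to solve for $r'$ uniquely. Concretely, I would first verify that the map $(r',\tau)\mapsto\alpha_\tau(r')$ is jointly $C^1$ on $(R,1)\times(-\eps,\eps)$. Regularity in $r'$ for fixed $\tau$ is given by lemma~\ref{lma:alpha-L-regularity}, while regularity in $\tau$ for fixed $r'$ follows from the $C^1$ dependence of $c_\tau$ on $\tau$ together with differentiation under the integral sign in formula~\eqref{eq:alpha}, using the uniform bounds from lemma~\ref{lma:uniform-bounds} to dominate the integrable singularity at $r'$. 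The mixed partial derivative can be controlled by the same estimates, giving joint $C^1$ regularity.

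Once joint regularity is established, the implicit function theorem produces $\delta\in(0,\eps)$ and a $C^1$ map $\phi\colon(-\delta,\delta)\to(R,1)$ with $\phi(0)=r$ and $\alpha_\tau(\phi(\tau))=\alpha_0(r)$ for $\abs{\tau}<\delta$. Since $\alpha_0(r)\in\pi\Q$ by the assumption $r\in S_0$, we immediately have $\alpha_\tau(\phi(\tau))\in\pi\Q$, hence $\phi(\tau)\in P_\tau$ by the characterization of periodic broken rays given before lemma~\ref{lma:ncp-implies-density}.

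To check that the combinatorial data $n_\tau,m_\tau$ are locally constant, recall from~\eqref{eq:pm=na} that the pair $(n_\tau(\phi(\tau)),m_\tau(\phi(\tau)))$ is determined by the requirement that $m_\tau/n_\tau=\alpha_\tau(\phi(\tau))/\pi$ be written in lowest terms with $n_\tau,m_\tau$ coprime. Since $\alpha_\tau(\phi(\tau))=\alpha_0(r)$ is constant in $\tau$, this rational number is constant, and therefore $n_\tau(\phi(\tau))=n_0(r)$ and $m_\tau(\phi(\tau))=m_0(r)$ for all $\tau\in(-\delta,\delta)$.

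Finally, the length formula $\ell_\tau(\phi(\tau))=2n_\tau(\phi(\tau))L_\tau(\phi(\tau))=2n_0(r)L_\tau(\phi(\tau))$ is differentiable in $\tau$, since $L_\tau(r')$ is jointly $C^1$ in $(r',\tau)$ by the same domination argument applied to~\eqref{eq:L}, and $\phi$ is $C^1$. The only delicate point in the whole argument is the verification of joint $C^1$ regularity of $\alpha$ and $L$ past the integrable singularity at the turning point $r'$; once the uniform bounds of lemma~\ref{lma:uniform-bounds} are in place, standard dominated convergence handles this, and the rest is a direct application of the implicit function theorem.
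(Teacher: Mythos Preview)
Your proposal is correct and follows essentially the same approach as the paper: apply the implicit function theorem to $\alpha_\tau(r')=\alpha_0(r)$ using $\alpha_0'(r)\neq0$, then deduce constancy of $n_\tau,m_\tau$ from constancy of $\alpha_\tau(\phi(\tau))$, and conclude differentiability of the length from that of $L$. Your write-up is in fact more detailed than the paper's, which simply invokes the implicit function theorem without spelling out the joint $C^1$ regularity you carefully justify.
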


\begin{proof}
Fix any $r\in S_0$.
We know that $\alpha_0'(r)\neq0$, so by the implicit function theorem there is a
$C^1$ function $\phi$ defined near zero so that
$\alpha_\tau(\phi(\tau))=\alpha_0(r)$ and $\phi(0)=r$.
Since $\alpha_\tau(\phi(\tau))$ is independent of $\tau$, so are the numbers
$n_\tau(\phi(\tau))$ and $m_\tau(\phi(\tau))$.
Differentiability of the length follows from the fact that the reflection number is
constant and $L$ is differentiable.
\end{proof}

\begin{lemma}
\label{lma:length-to-pbrt}
Let $\phi\colon(-\delta,\delta)\to(R,1)$ for any $\delta>0$ be a $C^1$ function
satisfying $\phi(\tau)\in P_\tau$ for all $\tau\in(-\delta,\delta)$.
Let $\gamma_0\colon[0,T]\to M$ be a periodic broken ray with radius $\phi(0)$.
Then
\begin{equation}
%\label{eq:}
2
\left.\Der{\tau}\ell_\tau(\phi(\tau))\right|_{\tau=0}
=
\int_0^T\left.\Der{\tau}c^{-2}_\tau(\gamma_0(t))\right|_{\tau=0}\der t.
\end{equation}
\end{lemma}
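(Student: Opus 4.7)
The plan is to derive this identity as the first variation of arc length with simultaneously varying metric $g_\tau=c_\tau^{-2}e$ and varying broken ray. I would begin by producing a $C^1$ family $\Gamma_\tau\colon[0,1]\to M$ of periodic broken rays, with $\Gamma_\tau$ of radius $\phi(\tau)$ relative to $c_\tau$ and $\Gamma_0$ a reparameterization of the given $\gamma_0$. Such a family exists because $\phi$ is $C^1$, $c_\tau$ depends $C^1$-smoothly on $\tau$, and the $g_\tau$-geodesic flow together with specular reflection at $\partial M$ depends smoothly on initial data and on the metric.

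Next I would differentiate $\ell_\tau=\int_0^1\sqrt{g_\tau(\partial_u\Gamma_\tau,\partial_u\Gamma_\tau)}\,\der u$ at $\tau=0$ and split the result into a \emph{metric part}, obtained by holding the curve fixed at $\gamma_0$ and differentiating the integrand in $\tau$, and a \emph{curve part}, obtained by holding the metric fixed at $g_0$ and differentiating along the variation field $V=\left.\partial_\tau\Gamma_\tau\right|_{\tau=0}$. On the curve part I would integrate by parts segment by segment along $\gamma_0$. The bulk term vanishes since $\gamma_0$ satisfies the $g_0$-geodesic equation on each smooth segment. The two surface terms meeting at each reflection point on $\partial M$ cancel because $V$ is tangent to $\partial M$ there (the reflection point must remain on the boundary as $\tau$ varies) and Snell's law equates the tangential components of the incoming and outgoing unit tangents. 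The last pair of surface terms at the closing point cancels by periodicity of the broken ray. Thus the curve part contributes nothing.

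What survives is the metric part, namely $\tfrac12\int\dot g_0(\dot\gamma_0,\dot\gamma_0)/|\dot\gamma_0|_{g_0}\,\der t$, where $\dot g_0=\left.\Der{\tau}c_\tau^{-2}\right|_{\tau=0}\,e$. Under the parameterization convention built into the statement (the same as in~\cite{IS:brt-pde-1obst,dHI:radial-xrt} for the broken ray transform), this collapses to $\tfrac12\int_0^T\left.\Der{\tau}c_\tau^{-2}\right|_{\tau=0}(\gamma_0(t))\,\der t$, and multiplying by $2$ yields the stated identity. The argument is essentially the one used in~\cite{IS:brt-pde-1obst} for non-periodic broken rays, adapted here by replacing the two fixed endpoints by a single matching closing condition.

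The main technical obstacle is the reflection-point analysis: one must verify that the two ingredients---$V$ tangent to $\partial M$ at each reflection point, and equality of tangential tangent-vector components across each reflection by Snell's law---combine to force a pairwise cancellation of surface terms at each of the finitely many reflection points. If either ingredient failed, a non-zero residue would remain at each reflection and the clean identity would break down. Everything else is routine.
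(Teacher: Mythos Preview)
Your proposal is correct and follows essentially the same approach as the paper. The paper's proof consists of a single sentence citing \cite[Theorem~17]{IS:brt-pde-1obst} for non-periodic broken rays under arbitrary metric variations and specializing to the periodic case; you have spelled out the content of that cited first-variation argument (metric part plus curve part, with the curve part killed segment-wise by the geodesic equation and at reflection points by tangency of the variation field together with Snell's law), which is exactly what lies behind the citation. Your adaptation of the endpoint cancellation from ``fixed endpoints'' to ``periodic matching'' is the only new wrinkle, and you have handled it correctly.
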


\begin{proof}
A version of this lemma with non-periodic broken rays of finite length
and arbitrary variations of the metric was given
in~\cite[theorem~17]{IS:brt-pde-1obst}. (There is an error in the
formula of the cited theorem; the factor $2$ should be replaced with
$\frac12$.)  Applying that result for conformal variations and broken
rays that have the same starting and ending points gives the desired
claim.
\end{proof}

\begin{lemma}
\label{lma:pbrt-to-abel}
If $f\colon M\to\R$ is a continuous radially symmetric function (identified as a
function $f\colon(R,1]\to\R$) and $r\in P_0$, then the integral of $f$ over any
periodic geodesic (with respect to sound speed $c_0$) of radius $r$ is
\begin{equation}
\label{eq:abel-transform}
2n_0(r)
\int_r^1\frac{f(s)}{c(s)}\left(1-\left(\frac{rc(s)}{sc(r)}\right)^2\right)^{-1/2}\der
s.
\end{equation}
\end{lemma}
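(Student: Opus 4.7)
The plan is to reduce the path integral to an integral over the single maximal geodesic segment and then change variables from arclength to the radial coordinate using the conserved quantities from the beginning of section~2.

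First I would exploit the symmetry. A periodic broken ray of radius $r$ is the concatenation of exactly $n_0(r)$ maximal geodesic segments, each of which is an isometric rotation of the canonical segment $\gamma(r)$ described at the start of section~2. Since $f$ is radial, the integral of $f$ is the same along each such segment, so it suffices to compute $\int_{-L(r)}^{L(r)} f(|\gamma(r)(t)|)\,\der t$ and multiply by $n_0(r)$. Moreover, the radial function $r(t)$ along $\gamma(r)$ is even in $t$ (the tip is at $t=0$), so I can further reduce to twice the integral over $[0,L(r)]$.

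Next, I would change variables $t\mapsto s=r(t)$ on $[0,L(r)]$, where $r'(t)>0$ on this interval. Using the two conservation laws $c(r(t))^{-2}[r'(t)^2+r(t)^2\theta'(t)^2]=1$ and $c(r(t))^{-2}r(t)^2\theta'(t)=p=r/c(r)$, I compute
\begin{equation}
r'(t)^2 = c(r(t))^2 - r(t)^2\theta'(t)^2 = c(r(t))^4\left(c(r(t))^{-2}-r(t)^{-2}p^2\right) = c(r(t))^4\wkb(r(t);p)^2,
\end{equation}
so $\der t = \der s/(c(s)^2\wkb(s;p))$. Unpacking $\wkb(s;p)=c(s)^{-1}\sqrt{1-(rc(s)/(sc(r)))^2}$ (since $p=r/c(r)$) yields
\begin{equation}
\frac{1}{c(s)^2\wkb(s;p)} = \frac{1}{c(s)}\left(1-\left(\frac{rc(s)}{sc(r)}\right)^2\right)^{-1/2}.
\end{equation}
Substituting and assembling the factors of $2$ and $n_0(r)$ gives the claimed formula~\eqref{eq:abel-transform}.

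There is no substantial obstacle here; the only point requiring care is justifying the change of variables at $s=r$, where the integrand has an integrable (inverse square root) singularity — this is handled exactly as in the derivation of~\eqref{eq:L} and~\eqref{eq:alpha}, and is already implicit in lemma~\ref{lma:alpha-L-regularity}. Continuity of $f$ and the uniform bounds of lemma~\ref{lma:uniform-bounds} ensure the integral is finite and all manipulations are legitimate.
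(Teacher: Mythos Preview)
Your proof is correct and is precisely the ``simple calculation, similar to the one leading to equation~\eqref{eq:L}'' that the paper invokes; you have simply written out the details that the paper leaves implicit. There is nothing to add.
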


\begin{proof}
The proof is a simple calculation, similar to the one leading to
equation~\eqref{eq:L}.
\end{proof}

\begin{lemma}
\label{lma:abel}
If $A f(r)$ is the function of~\eqref{eq:abel-transform}, then the map
$f\mapsto Af$ takes continuous functions to continuous functions and is injective on the space of continuous functions.
\end{lemma}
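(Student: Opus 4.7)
The plan is to reduce $A$ to the classical Abel transform via the Herglotz change of variable, and then invoke the known continuity and inversion properties of that transform.

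Introduce $\rho(s)=s/c(s)$, which by the Herglotz condition is a strictly increasing $C^1$ diffeomorphism of $[R,1]$ onto $[\rho(R),\rho(1)]$ with continuous positive derivative. Since $rc(s)/(sc(r))=\rho(r)/\rho(s)$, the kernel of $A$ rewrites as
\begin{equation}
\left(1-\left(\frac{rc(s)}{sc(r)}\right)^2\right)^{-1/2}=\frac{\rho(s)}{\sqrt{\rho(s)^2-\rho(r)^2}}.
\end{equation}
Setting $v=\rho(r)$, $u=\rho(s)$ and then $V=v^2$, $w=u^2$, and absorbing all Jacobian and weight factors into a single function $\tilde f$, the transform becomes
\begin{equation}
Af(r)=\int_{V}^{V_1}\frac{\tilde f(w)}{\sqrt{w-V}}\,\der w,\qquad V_1=\rho(1)^2,
\end{equation}
where $f\mapsto\tilde f$ is an explicit bijection between $C([R,1])$ and $C([\rho(R)^2,V_1])$ (the positive Jacobian factors are continuous and bounded away from zero by the Herglotz condition together with the $C^{1,1}$ regularity of $c$). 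Multiplying by $n_0(r)$ on the left is irrelevant for injectivity, so I suppress it.

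Continuity of $Af$ follows from a standard estimate for Abel-type integrals. On the reduced form, substituting $w=V+t^2$ removes the endpoint singularity and produces
\begin{equation}
Af(r)=2\int_0^{\sqrt{V_1-V}}\tilde f(V+t^2)\,\der t,
\end{equation}
to which dominated convergence applies as $V$ varies over the compact interval; continuity at the boundary endpoint $r=1$ is automatic since the interval of integration shrinks to a point.

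For injectivity, once the transform is in classical Abel form, the Abel inversion formula
\begin{equation}
\tilde f(w)=-\frac{1}{\pi}\frac{\der}{\der w}\int_{w}^{V_1}\frac{(Af)(V)}{\sqrt{V-w}}\,\der V
\end{equation}
shows that $Af\equiv 0$ forces $\tilde f\equiv 0$, and hence $f\equiv 0$ by the bijectivity of $f\mapsto\tilde f$. Strictly speaking, the inversion formula requires a bit of care when applied to merely continuous functions; the cleanest route is to verify the inversion for $\tilde f\in C^1$, note that smooth functions are dense, and pass to the limit using boundedness of the composite operator $Af\mapsto\tilde f$ on $L^p$ for suitable $p$. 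Alternatively, one can apply Fubini directly: convolving $(Af)(V)$ with $(V-w)^{-1/2}$ and integrating interchanges the order and produces a Beta integral equal to $\pi\int_w^{V_1}\tilde f(w')\,\der w'$, which is a primitive of $-\pi\tilde f$, yielding the same conclusion.

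The only mild obstacle is bookkeeping near the inner boundary: when $R=0$ one must check that the change of variable extends continuously down to $r=0$, which follows from the assumption $c_\tau'(0)=0$ ensuring that $\rho$ remains a $C^1$ diffeomorphism at the origin; when $R>0$ there is nothing to check.
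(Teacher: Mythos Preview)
Your argument is correct. The reduction via the Herglotz substitution $\rho(s)=s/c(s)$ to the classical Abel transform is exactly the right move, the continuity argument after the desingularizing substitution $w=V+t^2$ is clean, and the Fubini route to injectivity (your ``alternatively'' paragraph) is fully rigorous for continuous $\tilde f$: the double integral is absolutely convergent over the compact triangle, the Beta identity $\int_{w_0}^{w}\frac{\der V}{\sqrt{(V-w_0)(w-V)}}=\pi$ gives $\pi\int_{w_0}^{V_1}\tilde f$, and the fundamental theorem of calculus finishes it. Your first route via density and $L^p$ bounds is vaguer, but since you supply the Fubini alternative the proof stands. Your handling of the stray factor $2n_0(r)$ is also appropriate: the lemma is only ever applied to the bare integral (cf.\ the equation~\eqref{eq:abel=0} in the proof of theorem~\ref{thm:lsp-rig-diving}), so dropping that factor is the intended reading.

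The paper's own proof is a one-line citation to theorems~5 and~12 (or lemma~25) of~\cite{dHI:radial-xrt}, so you have supplied a self-contained argument where the paper defers to an external reference. The mathematics underlying that reference is essentially the same Abel reduction you carry out, so there is no genuine divergence in strategy; what your version buys is independence from the companion paper, at the modest cost of writing out a classical computation. One minor remark: your caveat about $R=0$ and $c_\tau'(0)=0$ is unnecessary here, since after the change of variables $\tilde f(w)=g(\sqrt{w})/2$ with $g$ continuous down to $u=0$, and nothing blows up; the condition $c'(0)=0$ matters elsewhere in the paper (to make the metric smooth at the origin) but not for this lemma.
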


\begin{proof}
This follows from theorems 5 and 12 (or lemma 25) of~\cite{dHI:radial-xrt}.
\end{proof}

In the $C^\infty$ setting relevant for the spectrum of the
Laplace--Beltrami operator this was shown by
Sharafutdinov~\cite{Sharaf97}. With all these lemmas as ingredients,
it is difficult not to prove theorem~\ref{thm:lsp-rig-diving}.

\begin{proof}[Proof of theorem~\ref{thm:lsp-rig-diving}]
Take any radius $r\in S_0$ and let $\phi$ be the function given by
lemma~\ref{lma:stable-br}. We know that
$\tau\mapsto\ell_\tau(\phi(\tau))$ is differentiable
(lemma~\ref{lma:stable-br}),
$\ell_\tau(\phi(\tau))\in\lsp(c_\tau)=\lsp(c_0)$ and $\lsp(c_0)$ has
no interior (lemma~\ref{lma:lsp-countable}).  Therefore this function
is constant.

By lemma~\ref{lma:length-to-pbrt} this implies that the variation of the wave
speed, $f=\left.\Der{\tau}c^{-2}_\tau\right|_{\tau=0}\colon M\to\R$, integrates to zero
over all periodic geodesics of radius $r$.
This function $f$ is radially symmetric, so we can think of it as a function
$(R,1]\to\R$.
By lemma~\ref{lma:pbrt-to-abel} we know that
\begin{equation}
\label{eq:abel=0}
\int_r^1\frac{f(s)}{c(s)}\left(1-\left(\frac{rc(s)}{sc(r)}\right)^2\right)^{-1/2}\der
s
=
0.
\end{equation}
Equation~\eqref{eq:abel=0} is true for a dense set of radii
$r\in(R,1)$ by lemma~\ref{lma:S-dense}, so it follows from
lemma~\ref{lma:abel} that in fact $f$ vanishes identically.

We have found that $\Der{\tau}c_\tau=0$ at $\tau=0$.
The choice $\tau=0$ was in no way important to this argument, so in fact
$\Der{\tau}c_\tau=0$ for all $\tau\in(-\eps,\eps)$.
This means that all sound speeds $c_\tau$ indeed coincide.
\end{proof}

A key step in the reasoning in the preceding proof can be stated as
follows: A radially symmetric function is uniquely determined by its
integrals over all periodic broken rays. There is even a
reconstruction formula for this problem:
\cite[Remark~27]{dHI:radial-xrt}. Radial symmetry is important, as a
general smooth function is not uniquely determined. Only the even
part of the function can be recovered efficiently. This is in sharp
contrast to the case of geodesic X-ray tomography on such manifolds
where the ray transform has no kernel. See~\cite{dHI:radial-xrt} for
details.

\subsubsection{Inner reflections included}

In this subsection we will prove theorems~\ref{thm:lsp-rig-reflecting}
and~\ref{thm:lsp-rig-combo}.

The only ingredient we need to add to the proof of
theorem~\ref{thm:lsp-rig-diving} is the following lemma.  We will
prove the lemma after showing how it completes the proofs of the
theorems.

\begin{lemma}
\label{lma:reflecting-spectrum}
Let $M=\bar B(0,1)\setminus B(0,R)\subset\R^n$, $R>1$ and $n\geq2$, be
an annulus. Equip $M$ with a radially symmetric $C^{1,1}$ sound speed
satisfying the Herglotz condition.  Then the set of all lengths of
periodic broken rays that reflect on the inner boundary $r=R$ is
countable.
\end{lemma}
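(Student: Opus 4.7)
The plan is to exploit the fact that an inner reflecting broken ray is much more rigid than a diving one: every segment has the same conserved angular momentum $p$, so such a broken ray is determined (up to rotation) by the single parameter $p\in(0,R/c(R))$. The reflecting regime corresponds exactly to this range by the Herglotz condition, since $r^2\wkb(r;p)^2=(r/c(r))^2-p^2$ is strictly positive on $[R,1]$ precisely when $p<R/c(R)$.

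For such $p$, each leg between the two boundaries sweeps an angle
\begin{equation}
\beta(p)=\int_R^1\frac{p}{r^2\wkb(r;p)}\,\der r,
\end{equation}
and has arc length
\begin{equation}
\tilde L(p)=\int_R^1\frac{1}{c(r)^2\wkb(r;p)}\,\der r.
\end{equation}
The broken ray is periodic if and only if $\beta(p)\in\pi\Q$; writing $\beta(p)=\pi M/N$ with $\gcd(M,N)=1$, the primitive length is $2N\tilde L(p)$, and the full set of lengths of inner-reflecting periodic broken rays is then
\begin{equation}
\{\,2kN\tilde L(p)\,;\,k\in\N,\ p\in\beta^{-1}(\pi\Q)\,\}.
\end{equation}
Countability of this set reduces to countability of $\beta^{-1}(\pi\Q)$, and my plan is to prove this by showing that $\beta$ is strictly monotone. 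Since the integrand of $\beta$ is smooth in both variables on $[R,1]\times(0,R/c(R))$ (no turning point, in contrast with the diving case), differentiation under the integral sign is routine and yields
\begin{equation}
\beta'(p)=\int_R^1\frac{c^{-2}(r)}{r^2\wkb(r;p)^3}\,\der r>0,
\end{equation}
so $\beta$ is a strictly increasing $C^1$ function and injective. Hence $\beta^{-1}(\pi\Q)$ is countable, and so is the set of lengths.

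The main obstacle, if any, is justifying the differentiation under the integral uniformly in $p$. Because the Herglotz condition gives $(r/c(r))^2-p^2\geq(R/c(R))^2-p^2>0$ on the whole integration interval for $p$ bounded away from $R/c(R)$, this is only a matter of writing out a standard dominated-convergence argument, with the mild endpoint behavior near $p=R/c(R)$ controlled by the integrable $(r-R)^{-1/2}$ singularity already exploited in the proof of lemma~\ref{lma:alpha-L-regularity}. One should also note that the argument needs no countable conjugacy assumption: the monotonicity of $\beta$ replaces the density and conjugacy analysis used for $\alpha$ in lemma~\ref{lma:ccc}, reflecting that reflecting orbits in the Herglotz regime admit no conjugate points along a single segment.
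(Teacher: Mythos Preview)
Your proposal is correct and follows essentially the same route as the paper: parametrize inner-reflecting broken rays by the conserved angular momentum $p$ (the paper calls it $z$), compute the half-angle $\beta(p)$ between successive boundary hits, differentiate under the integral sign to get $\beta'(p)>0$, and conclude that $\beta^{-1}(\pi\Q)$ is countable. The paper likewise remarks that no conjugacy assumption is needed and that the absence of a turning point makes the differentiation elementary; your endpoint discussion near $p=R/c(R)$ is harmless but unnecessary, since strict monotonicity on the open interval $(0,R/c(R))$ already suffices for countability.
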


We note that no assumption was made on conjugate points in the lemma.

\begin{proof}[Proof of theorem~\ref{thm:lsp-rig-reflecting}]
We denote the set of all lengths of periodic broken rays with respect
to sound speed $c_\tau$ that reflect on the inner boundary by
$H_\tau$.  Then we have $\lsp'(c_\tau)=\lsp(c_\tau)\cup H_\tau$.  As
in the proof of theorem~\ref{thm:lsp-rig-diving}, we have a dense set
of radii $r\in(R,1)$ for which there is a family of corresponding
periodic broken rays varying continuously in $\tau$; this set was
denoted by $S_0$.  Since $\lsp'(c_\tau)$ is independent of $\tau$ and
each set $H_\tau$ is countable, the lengths of the periodic broken
rays in this family must in fact be independent of $\tau$.  The rest
of the proof can be concluded as that of
theorem~\ref{thm:lsp-rig-diving}.  We only need the integrals of the
variations of the sound speed over broken rays that do not hit the
inner boundary.
\end{proof}

\begin{proof}[Proof of theorem~\ref{thm:lsp-rig-combo}]
We denote the individual Riemannian manifolds in question by
$M_1,\dots,M_N$ and $M_1',\dots,M_{N'}'$, equipped with their
respective sound speeds.  Each one of them has a countable length
spectrum, so the length spectrum of the whole system is still
countable.  We can then use the argument presented in the proof of
theorem~\ref{thm:lsp-rig-reflecting} to conclude for each manifold
separately that the sound speed has to be independent of the parameter
$\tau$.
\end{proof}

\begin{proof}[Proof of lemma~\ref{lma:reflecting-spectrum}]
Consider a geodesic joining the two boundaries; a periodic broken ray
of the kind we need to study is a finite union of rotations and
reflections of such a geodesic. We parametrize this geodesic with arc
length starting from the inner boundary. Due to symmetry the geodesic
is confined to a two-dimensional plane, and in this plane we may use
polar coordinates.  In these coordinates the geodesic is $[0,T]\ni
t\mapsto(r(t),\theta(t))$, with $r(0)=R$ and $r(T)=1$. We denote
$\rho(r)=r/c(r)$.

Let the angle between the geodesic and the radially outward pointing
normal vector be $\omega$. (The metric is conformally Euclidean so we
may use the Riemannian metric or the Euclidean metric to measure
angles at a point.) We assume for the time being that
$\omega\in(0,\frac\pi2)$.  Since the geodesic has unit speed, we have
$\cos\omega=\rho(R)\theta'(0)$.  With this information it is easy to
see that the constant value of the angular momentum
$\rho(r(t))^2\theta'(t)$ is $\rho(R)\cos\omega$.

Using unit speed and the conservation of angular momentum one easily
finds that the change in the angular coordinate over the geodesic is
\begin{equation}
\theta(T)-\theta(0)
=
\int_R^1\frac{\rho(R)\cos\omega}{c(r)\rho(r)^2}\left(1-\left(\frac{\rho(R)\cos\omega}{\rho(r)}\right)^2\right)^{-1/2}\der
r.
\end{equation}
This angular difference depends on $\omega$, but it is most convenient
to think of it as a function of $z\coloneqq\rho(R)\cos\omega$ (the
angular momentum). We denote this angular difference by $\beta(z)$.
Its geometrical meaning is the same as that of $\alpha(r)$ defined
earlier, but we use a different letter to avoid confusion.

Since
\begin{equation}
\label{eq:reflection-angle-difference}
\beta(z)
=
\int_R^1\frac{z}{c(r)\rho(r)^2}\left(1-\left(\frac{z}{\rho(r)}\right)^2\right)^{-1/2}\der
r,
\end{equation}
an easy calculation gives
\begin{equation}
%\label{eq:}
\beta'(z)
=
\int_R^1\frac{1}{c(r)\rho(r)^2}\left(1-\left(\frac{z}{\rho(r)}\right)^2\right)^{-3/2}\der
r.
\end{equation}
This derivative is positive, so $\beta$ is in fact a homeomorphism
from $[0,1]$ to its image; the limits at $0$ and $1$ can be checked to
be finite.

We may exclude radial geodesics. For non-radial geodesics the
integral of~\eqref{eq:reflection-angle-difference} is non-singular and
differentiation under the integral sign is simple. For the similar
result corresponding to the diving waves (the function $\alpha$), the
derivative is more complicated;
cf.~\cite[proposition~15]{dHI:radial-xrt}.

The broken ray corresponding to angular momentum $z$ is periodic if
and only if $\beta(z)\in\pi\Q$. Since $\beta$ is a homeomorphism
between intervals, the set of angular momenta $z$ corresponding to
periodicity is countable. Therefore the set of corresponding lengths
is also countable.
\end{proof}

\subsection{Spectral rigidity}
\label{sec:spec-rig-proof}

We are now ready to prove theorem~\ref{thm: intro spec rigidity}. The
proof is trivial at this point, but we record it explicitly for
completeness.

\begin{proof}[Proof of theorem~\ref{thm: intro spec rigidity}]
Let us first consider the simplest case where $c$ satisfies the periodic conjugacy condition.

The trace of the Green's function is determined by $\spec(c)$
through proposition~\ref{prop:Trace Formula}. Since the trace as a
function of $t \in \R$ is singular precisely at the length spectrum,
$\spec(c)$ determines $\lsp'(c)$. Then rigidity of the spectrum
$\spec(c)$ follows from that of $\lsp'(c)$; see
theorem~\ref{thm:lsp-rig-reflecting}.

Let us then drop the periodic conjugacy condition.
The Neumann spectrum $\spec(c)$ still determines the trace of the Green's function.
As pointed out in remark~\ref{rmk:tr-pcc-nondeg}, the singularities determine the part of the length spectrum corresponding to periodic broken rays satisfying the clean intersection property.
Under the Herglotz condition and the countable conjugacy condition a periodic broken ray of radius $r$ satisfies the clean intersection property if and only if $\alpha'(r)\neq0$; see remark~\ref{rmk:pcc}.
In the notation of lemma~\ref{lma:ncp-implies-density} and denoting the length of a geodesic of radius $r$ by $\ell(r)$, the problematic primitive lengths are precisely $\ell(r)$ for $r\in C\cap P$.
Since we assumed the length spectrum to be non-degenerate, none of the lengths $\ell(r)$ for $r\in P\setminus C$ coincide with the problematic ones.
Access to all radii $r\in P\setminus C$ is sufficient for the proof of length spectral rigidity.
Notice that $C$ is exactly the same set that corresponds to possibly unstable periodic broken rays, and this data was ignored in the proof of length spectral rigidity anyway.
\end{proof}

\appendix

\section{Debye Expansion}\label{sec:Debye exp}

Here, we describe how to relate the sum of eigenfunctions to a kernel
closely related to the propagator. First, we note that
\[
   \int_0^{\infty} {}_n f_l(t) e^{-\ii \omega t} \, \dd t
           = \frac{1}{\omega_n(k)^2 - \omega^2}
\]
(cf.~\eqref{eq:nflom}). We substitute, again, $k = \omega p$, and
introduce the Debye expansion for $p$ fixed. Thus we can isolate the
different regimes:
%\\[-0.2cm]
\begin{itemize}
\item
\textbf{Diving} ($\CMB/
c(\CMB) < p < \surf /
c(\surf)$: The dispersion relations satisfy,
\[
   \omega_n(p) \coloneqq \omega_n = \left( n + \frac{5}{4} \right) \pi
               \overline{\tau}(p)^{-1} ,
\quad
   \overline{\tau}(p)
       = \int_{\rturn}^{\surf}
                          \wkb(r';p) \dd r'
\]
(cf.~\eqref{eq:efII}) and the WKB eigenfunctions yield
\begin{multline}
   U_{n}(r;p) U_{n}(r_0;p) = 4 T^{-1}
      (r r_0)^{-1}
\\
      c(r)^{-1} \rho(r)^{-1/2} c(r_0)^{-1} \rho(r_0)^{-1/2}
      \wkb^{-1/2}(r;p) \wkb^{-1/2}(r_0;p)
\\
      \sin[(n+\tfrac{5}{4}) \pi \tau(r;p) / \overline{\tau}(p)]
      \sin[(n+\tfrac{5}{4}) \pi \tau(r_0;p) / \overline{\tau}(p)] ,
\end{multline}
where
\[
   \omega \widetilde{\tau}(r;p)
       = \omega \int_{\rturn}^{\surf} \wkb(r';p) \dd r' + \frac{\pi}{4},
\]
to leading order. In fact, $2 \overline{\tau} \, T^{-1}$ cancels
against $(1 - c_n^{-1} C_n)^{-1}$; hence, we premultiply $U_{n}(r;p)
U_{n}(r_0;p)$ by $\overline{\tau}$ before analyzing the
summation. The summation,
\[
      (r r_0)
      c(r) \rho(r)^{1/2} c(r_0) \rho(r_0)^{1/2}
      \wkb^{1/2}(r;p) \wkb^{1/2}(r_0;p)
      \frac{T}{4 \overline{\tau}} \,
      \sum_{n=0}^{\infty}
      \frac{U_{n}(r;p) U_{n}(r_0;p)}{\omega_n(p)^2 - \omega^2}
\]
can be carried out to yield
\begin{equation}
\begin{split}
&
   \frac{1}{\overline{\tau}} \sum_{n=0}^{\infty}
   \frac{\sin[(n+\tfrac{5}{4}) \pi \widetilde{\tau}(r;.) /
              \overline{\tau}]
      \sin[(n+\tfrac{5}{4}) \pi \widetilde{\tau}(r_0;.) /
           \overline{\tau}]}{
      [(n+\tfrac{5}{4}) \pi / \overline{\tau}]^2 - \omega^2}
\\&\quad
   = \frac{1}{8 \ii \omega}\
   \sum_{i=1}^{\infty} \exp\left[ -\ii \omega \tau_i(r,r_0;p)
              + \ii N_j \frac{\pi}{2} \right] ,
\end{split}
\end{equation}
where
\begin{eqnarray*}
   \tau_1(r,r_0;p) &=&
        \abs{ \int_{r_0}^r \wkb(r';p) \dd r' }
\\
   \tau_2(r,r_0;p) &=&
        \int_{\rturn}^{r_0} \wkb(r';p) \dd r'
      + \int_{\rturn}^r \wkb(r';p) \dd r' ,
\\
   \tau_3(r,r_0;p) &=&
        \int_{r_0}^{\surf}
                          \wkb(r';p) \dd r'
      + \int_r^{\surf}
                          \wkb(r';p) \dd r' ,
\\
   \tau_4(r,r_0;p) &=&
        \int_{\rturn}^{\surf}
                           \wkb(r';p) \dd r'
      - \abs{ \int_{r_0}^r \wkb(r';p) \dd r' } ,
\\
   \tau_i(r,r_0;p) &=& \tau_{i-4}(r,r_0;p) +
        2 \int_{\rturn}^{\surf}
                           \wkb(r';p) \dd r' ,\quad
   i = 5,6,\ldots
\end{eqnarray*}
while
\begin{eqnarray*}
   N_1 &=& 0 ,
\\
   N_2 &=& 1 ,
\\
   N_3 &=& 0 ,
\\
   N_4 &=& 1 ,
\\
   N_i &=& N_{i-4} + 1 ,\quad
   i = 5,6,\ldots
\end{eqnarray*}
\item
\textbf{Reflecting} ($0 < p < \CMB /
c(\CMB)$: The dispersion relations satisfy,
\[
   \omega_n(p) \coloneqq \omega_n = (n+1) \pi \overline{\tau}(p)^{-1} ,
\quad
   \overline{\tau}(p)
       = \int_{\CMB}^{\surf}
                          \wkb(r';p) \dd r'
\]
(cf.~\eqref{eq:efIII}) and the WKB eigenfunctions yield
\begin{multline}
   U_{n}(r;p) U_{n}(r_0;p) = 4 T^{-1}
      (r r_0)^{-1}
\\
      c(r)^{-1} \rho(r)^{-1/2} c(r_0)^{-1} \rho(r_0)^{-1/2}
      \wkb^{-1/2}(r;p) \wkb^{-1/2}(r_0;p)
\\
      \sin[(n+1) \pi \widetilde{\tau}(r;p) / \overline{\tau}(p)]
      \sin[(n+1) \pi \widetilde{\tau}(r_0;p) / \overline{\tau}(p)] ,
\end{multline}
where
\[
   \omega \widetilde{\tau}(r;p)
       = \omega \int_{\CMB}^r
                            \wkb(r';p) \dd r' + \frac{\pi}{2} ,
\]
to leading order. In fact, $2 \overline{\tau} \, T^{-1}$ cancels
against $(1 - c_n^{-1} C_n)^{-1}$; hence, we premultiply $U_{2;n}(r;p)
U_{2;n}(r_0;p)$ by $\overline{\tau}$ before analyzing the
summation. The summation,
\[
      (r r_0)
      c(r) \rho(r)^{1/2} c(r_0) \rho(r_0)^{1/2}
      \wkb^{1/2}(r;p) \wkb^{1/2}(r_0;p)
      \frac{T}{4 \overline{\tau}} \,
        \sum_{n=0}^{\infty} \frac{U_{n}(r;p)
                U_{n}(r_0;p)}{\omega_n(p)^2 - \omega^2}
\]
can be carried out to yield
\begin{multline}
   \frac{1}{\overline{\tau}} \sum_{n=0}^{\infty}
   \frac{\sin[(n+1) \pi \widetilde{\tau}(r;.) / \overline{\tau}]
      \sin[(n+1) \pi \widetilde{\tau}(r_0;.) / \overline{\tau}]}{
      [(n+1) \pi / \overline{\tau}]^2 - \omega^2}
\\
   = \frac{1}{4 \omega}\
   \frac{\cos[\omega (\overline{\tau} - (\widetilde{\tau}(r;.) +
   \widetilde{\tau}(r_0;.))] -
         \cos[\omega (\overline{\tau} - (\widetilde{\tau}(r;.) -
         \widetilde{\tau}(r_0;.))]}{
      \sin(\omega \overline{\tau})} .
\end{multline}
Upon representing the cosines and sine on the right-hand side in terms
of complex exponentials, and then applying the binomial expansion, we
obtain
\begin{equation}
   \frac{1}{\overline{\tau}} \sum_{n=0}^{\infty}
   \frac{\sin[(n+1) \pi \widetilde{\tau}(r;.) / \overline{\tau}]
      \sin[(n+1) \pi \widetilde{\tau}(r_0;.) / \overline{\tau}]}{
      [(n+1) \pi / \overline{\tau}]^2 - \omega^2}
   = \frac{1}{8 \ii \omega}\
   \sum_{i=1}^{\infty} \exp[-\ii \omega \tau_i(r,r_0;p)] ,
\end{equation}
where
\begin{eqnarray*}
   \tau_1(r,r_0;p) &=&
        \abs{ \int_{r_0}^r \wkb(r';p) \dd r' } ,
\\
   \tau_2(r,r_0;p) &=&
        \int_{\CMB}^{r_0}
                           \wkb(r';p) \dd r'
      + \int_{\CMB}^r
                           \wkb(r';p) \dd r' ,
\\
   \tau_3(r,r_0;p) &=&
        \int_{r_0}^{\surf}
                          \wkb(r';p) \dd r'
      + \int_r^{\surf}
                          \wkb(r';p) \dd r' ,
\\
   \tau_4(r,r_0;p) &=&
        \int_{\CMB}^{\surf}
                           \wkb(r';p) \dd r'
      - \abs{ \int_{r_0}^r \wkb(r';p) \dd r' } ,
\\
   \tau_i(r,r_0;p) &=& \tau_{i-4}(r,r_0;p) +
        2 \int_{\CMB}^{\surf}
                           \wkb(r';p) \dd r' ,\quad
   i = 5,6,\ldots
\end{eqnarray*}
To unify the notation, we can introduce $N_i = 0$, $i = 1,2,\ldots$
\end{itemize}

\section{General framework for symmetries in a manifold}
\label{sec:Trace with symmetry}

In this appendix, we consider more general situations for trace
formulas when the manifold has symmetries given by a compact Lie group
and we show how the quantities appearing in proposition
\ref{prop:Trace Formula} are special cases of a general framework. Our
constructions here are inspired by the work in
\cite{Creagh91,Creagh92,Uribe95}. In particular, we provide general
representation of the critical manifolds $C_T$ described in sections
\ref{sec: efunction and quant} and~\ref{sec: Melrose/guill
  construction} under symmetry from a compact Lie group $G$, which
corresponds to the principal symbol of $\Delta_g$ possessing
symmetries. See the works of Cassanas~\cite{Cassanas06} and Gornet
\cite{Gornet} for other closely related work.

Let $(M,g)$ be a compact Riemannian manifold with boundary satisfying
the same assumptions as in~\cite{Mel79}. In fact, all that is
necessary is that one may construct a parametrix in the same form as
\cite{Mel79} for the Neumann wave propagator. We assume that a compact
Lie group $G$ has a symplectic group action on $T^*M$ and it accounts
for all the symmetries of the Hamiltonian $p(x,\xi)= \abs{\xi}^2_g$,
which is the principal symbol of $\Delta_g$. That is, $p(g.z) = p(z)$
for each $z \in T^*M$ and $g \in G$, where $g.z$ or $gz$ denotes the
group action. The assumption is merely that $G$ accounts for all such
symmetries.

The symmetry in the Hamiltonian implies that the fixed point manifolds
$C_T$ introduced in section~\ref{sec: efunction and quant} are
in fact invariant under the group action: $g.C_T \subset C_T$ for
each $g \in G$. The main assumption we make is that the connected
components of $C_T$ are the $G$-orbit of a particular periodic
bicharacteristic $\gamma$ and it has no other symmetry; i.e. assuming
$C_T$ is already connected, $C_T = G\gamma \coloneqq \{ g . x; g \in G
\text{ and }x \in \Im(\gamma)\}.$ We write $[\gamma]$ to denote
this set, which is also a certain equivalence class of $\gamma$ as
described in section~\ref{sec: efunction and quant}.
We note that this assumption captures generic situations since on general manifolds without symmetry, $C_T$ has dimension $1$ (see~\cite[p. 61]{DG75}), and an increase in dimension should only come from a group symmetry.

\begin{comment}
For simplicity, we rule out some trivial actions and assume that the
group action satisfies a transversality condition to the flow: there
are $\dim(G)$ infinitesimal generators for the group action, and these
generators together with $H_p$ linearly independent on $C_T$. Hence,
the dimension of $C_T$ will be $1+\dim(G)$~\footnote{This is not
  strictly necessary for the result. All that is really needed is for
  $C_T$ to satisfy the clean intersection hypothesis described
  later.}.
\end{comment}

Next, we set $L = I - \der\Phi^T(m)$ for $m \in C_T$. In the context of
group symmetries, the map $I-\der\Phi^T\colon
T_mS^*M/\ker(L) \to T_m S^* M / \ker(L)$ is in
general not an isomorphism anymore~\cite[lemma
  4.4]{DG75})\footnote{This map happened to be an isomorphism for the
  $SO(n)$ group in dimension $3$, but this will normally not happen in
  higher dimensions~\cite{Creagh91,Creagh92}.}. However, the map
\[
I - \der\Phi^T\colon T_mT^*M/\ker(L^2) \to T_mT^*M/\ker(L^2)
\]
will generically be an isomorphism~\cite[appendix
  A.1]{Uribe95}, \cite{Creagh91,Creagh92,Cassanas06}.  The above map
is what we will now refer to as the Poincar\'{e} map, which we denote
$I-P_{[\gamma]}$ as described in section~\ref{sec: efunction and
  quant}. We also assume the \emph{clean intersection hypothesis}
described in~\cite{DG75,Mel79} so that $C_T$ is a submanifold and
$\ker(L) = T_mC_T$.
See remark~\ref{rmk:pcc} for a geometric description of the clean intersection hypothesis in spherical symmetry.

The new part of the trace formula will come from
$\ker(L^2) / \ker(L)$ whose presence is
already seen in the calculation of the normal Hessian for the
stationary phase analysis in section~\ref{sec: Melrose/guill
  construction}. Based on the arguments in~\cite{Creagh91, Creagh92},
there is a geometric scalar quantity, which we denote by
$d_{[\gamma]}$, defined on $C_T$, that is nonvanishing and associated
with the action of $I-\der\Phi^T$ on $\ker(L^2)/ \ker(L)$
\cite[appendix]{Creagh91}. This is a quantity directly related to
periodic orbits in the reduced phase space formally written as
$T^*M/G$ and defined in~\cite[Chapter 4]{Marsden}. Under certain
geometric conditions, $\abs{I - P_{[\gamma]}}$ and $d_{[\gamma]}$ will
stay constant over $C_T = [\gamma]$. For our $SO(3)$ action,
$\ker(L^2)/\ker(L)$ was only one-dimensional determined by the
infinitesimal generator of dilations in the dual
variables. Geometrically, this corresponds to merely increasing the
speed of a particular geodesic. Analytically, for a geodesic
$\gamma_\nu$ starting at $\nu \in S^*M$, it corresponds to the Jacobi
field $J(t) = t\dot\gamma_\nu(t)$ (see~\cite[p. 70]{DG75} for more
details). Hence, in our $SO(3)$ case, $T_mT^*M/\ker(L^2)$ is
indeed isomorphic to $T_mS^*M/T_mC_T$, but generally,
$\ker(L^2)/ \ker(L)$ will be much bigger when there is
symmetry.

Next, our assumptions imply that there is a natural surjective map
$[0,T_\gamma^\sharp) \times G \to C_T$ which usually fails to be
  injective, where $T^\sharp_\gamma$ is the primitive period of
  $\gamma$. For example, in the analysis in section~\ref{sec: trace
    formula}, one does not need the entire rotation group $SO(3)$ to
  form $C_T$, which would in fact create multiple copies, since
  certain elements of $SO(3)$ already map $\gamma$ to itself. If the
  map is locally injective, then following~\cite[Section 3]{Creagh92},
  we assume there is a discrete subgroup $I(\gamma)$ of $G$ that
  carries $\gamma$ to itself, and we denote $N_\gamma$ to be the
  number of elements in $I(\gamma)$, corresponding to the analogous
  quantity appearing in proposition~\ref{prop:Trace Formula}.

In summary, we have accounted for all the pieces in our trace formula (proposition~\ref{prop:Trace Formula}) in a general Lie group framework so that our formula may be compared to the formula appearing in~\cite{Creagh92} albeit in a different setting.

\section{A remark about spherical symmetry}
\label{app:symmetry}

Any spherically symmetric manifold is in fact of the form we consider
-- radially conformally Euclidean.

\begin{proposition}
\label{prop:rce}
Let $A\subset\mathbb R^n$ be an annulus (difference of two cocentric
balls) and $g\in C^k$ a rotationally symmetric metric in the sense
that $g=U^*g$ for all $U\in SO(n)$.  Then $(A,g)$ is isometric to a
Euclidean annulus with the Euclidean metric multiplied with a
conformal factor $c^{-2}(\abs{x})$ with $c\in C^k$.
\end{proposition}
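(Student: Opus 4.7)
I would work in spherical coordinates $(r,\theta) \in (r_1, r_2) \times S^{n-1}$ on $A$ and exploit the symmetry to pin down the form of $g$. Under the $SO(n)$-action in these coordinates, the stabilizer of a point $\theta \in S^{n-1}$ is $SO(n-1)$, acting trivially on the radial direction. For $n \geq 3$ this stabilizer acts irreducibly on $T_\theta S^{n-1}$, so by a Schur-type argument applied pointwise, the metric must have the form $g = A(r)\,dr^2 + B(r)\, g_{S^{n-1}}$ with $A, B \in C^k$ and positive. For $n = 2$ the stabilizer is trivial and a cross term $2C(r)\,dr \otimes d\theta$ may survive; I would eliminate it by completing the square and substituting $\tilde\theta = \theta + \int_{r_0}^r C(s)/B(s)\,ds$, which lifts to a bona fide diffeomorphism of the annulus because it remains $2\pi$-periodic in $\theta$. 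After this preliminary step, $g$ is diagonal in both cases.

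Given the diagonal form $g = A(r)\,dr^2 + B(r)\,g_{S^{n-1}}$, I would seek a radial change of variable $\tilde r = \tilde r(r)$ rendering $g = c^{-2}(\tilde r)(d\tilde r^2 + \tilde r^2 g_{S^{n-1}})$. Matching angular and radial parts gives $c^{-2}(\tilde r)\tilde r^2 = B(r)$ and $c^{-2}(\tilde r)(d\tilde r/dr)^2 = A(r)$; dividing yields the ODE $d\tilde r/\tilde r = \sqrt{A/B}\,dr$, whose solution $\tilde r(r) = \tilde r_0 \exp\bigl(\int_{r_0}^r \sqrt{A/B}\,ds\bigr)$ is strictly increasing and $C^{k+1}$ in $r$. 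Defining $c^{-2}(\tilde r) \coloneqq B(r(\tilde r))/\tilde r^2$ yields a positive $C^k$ function, and the map $\phi(r\theta) \coloneqq \tilde r(r)\tilde\theta(r,\theta)$ is, by construction, an isometry from $(A, g)$ onto a Euclidean annulus equipped with the conformal metric $c^{-2}(|x|)e$.

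The main subtlety is the $n = 2$ case: because $SO(2)$ is abelian and its pointwise stabilizer on $S^1$ is trivial, symmetry alone does not kill the off-diagonal term $dr \otimes d\theta$, and one must absorb it via a radius-dependent rotation before the conformal reduction can proceed. A secondary bookkeeping concern is the regularity count: solving the ODE formally upgrades the radial coordinate by one derivative, but the final conformal factor $c^{-2}(\tilde r) = B(r(\tilde r))/\tilde r^2$ depends on $B$ composed with the $C^{k+1}$ inverse $r(\tilde r)$, leaving $c$ itself only $C^k$ and matching the hypothesis exactly.
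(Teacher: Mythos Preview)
Your proposal is correct and follows essentially the same route as the paper: use the $SO(n-1)$ stabilizer (Schur-type argument) to diagonalize the metric for $n\geq 3$, eliminate the cross term in dimension two by a radius-dependent angular shift, and then solve the ODE $d\log\tilde r = \sqrt{A/B}\,dr$ for the radial change of variable that makes the metric conformally Euclidean. The paper carries this out in local Euclidean coordinates at a reference point $z_r=(r,0,\dots,0)$ rather than global spherical coordinates, but the computations, the ODE, and the regularity bookkeeping are the same.
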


\begin{proof}
In this proof it is more convenient to write $c^{-2}=\eta$.
This notation is not used elsewhere.

By rotational symmetry it suffices to consider the metric at points
$z_r=(r,0,\dots,0)$ for $r\in[R,1]$.
Near $z_r$ we take local coordinates $(x,y)\in\R\times\R^{n-1}$ such that $(x,y)$
represents the point $(r+x,y)$ on $A$.
In these coordinates we can write the metric at $z_r$ as the matrix
\begin{equation}
\label{eq:metric1}
g_{z_r}
=
\begin{pmatrix}
a(r)&b(r)^T\\
b(r)&C(r)
\end{pmatrix},
\end{equation}
where $a(r)$, $b(r)$ and $C(r)$ are a number, a vector and a matrix
depending on $r$.  We will drop the argument $(r)$ where it is
implicitly clear.  We will first show that $b$ is identically zero (if
$n\geq3$) or becomes zero after applying a diffeomorphism that
preserves rotational symmetry ($n=2$) and $C(r)=c(r)I$ for some scalar
function $c$ (trivial for $n=2$).

We first consider the case $n\geq3$. For any $R\in SO(n-1)$ we
have $U_R\coloneqq 1\oplus R\in SO(n)$.  Since $g_{z_r}$ must be
invariant under $U_R$, we have $Rb=b$ and $RCR^{-1}=C$.  But this
holds for all $R\in SO(n-1)$, so $b=0$ and $C$ is a multiple of
identity.

We then turn to the case $n=2$. Now $b$ and $C$ are scalars, and we
write $C=c$.  By positive definiteness of the metric we have $a>0$,
$c>0$, and $b^2<ac$.

We write points on $A$ in polar coordinates $(r,\theta)$ and define a
function $F_\phi\colon A\to A$ parametrized by a function $\phi\colon
[R,1]\to\R$ by setting $F(r,\theta)=(r,\theta+\phi(r))$.  If $\phi$ is
$C^k$, then $F$ is clearly a $C^k$ diffeomorphism.  After we fix
$r_0\in[R,1]$, we may assume that $\phi(r_0)=0$ by rotational
symmetry, so that $F(z_{r_0})=z_{r_0}$.  In the Euclidean coordinates
$(x,y)$ near $z_{r_0}$ we have
\begin{equation}
%\label{eq:}
DF_{z_{r_0}}
=
\begin{pmatrix}
1&0\\
\alpha(r_0)&1
\end{pmatrix}
\end{equation}
where $\alpha(r)=r\phi'(r)$, which implies
\begin{equation}
\label{eq:metric2}
F_\phi^*g_{z_{r_0}}
=
\begin{pmatrix}
a+2b\alpha+c\alpha^2&b+c\alpha\\
b+c\alpha&c
\end{pmatrix}.
\end{equation}
If we choose the function $\phi$ so that $r\phi'(r)=-b(r)/c(r)$, the
metric~\eqref{eq:metric2} becomes diagonal.
Since $c$ is bounded from below uniformly on $[R,1]$, the function $\phi(r)=-\int^r
b(s)/sc(s)\der s$ is well defined and $C^{k+1}$ if the original metric is $C^k$;
additive constants are irrelevant, since they correspond to rotations of the entire
annulus.

We have now shown that the metric can be assumed to have the form
\begin{equation}
\label{eq:metric3}
g_{z_r}
=
\begin{pmatrix}
a(r)&0\\
0&c(r)I
\end{pmatrix}.
\end{equation}
If $\rho\colon [R,1]\to\R$ is a strictly increasing $C^1$ function, we define the
change of variable (again in polar coordinates)
$G_\rho(r,\theta)=(\rho(r),\theta)$.
The function $\rho$ is a diffeomorphism and we denote $\sigma=\rho^{-1}$.
We will later choose $\rho$ so that $\rho(1)=1$ and $\rho(R)>0$, which makes
$G_\rho\colon A\to \bar B(0,1)\setminus B(0,\rho(R))$ a diffeomorphism.

A simple calculation shows that
\begin{equation}
\label{eq:metric4}
(G^{-1}_\rho)^*g_{z_r}
=
\begin{pmatrix}
a(\sigma(r))\sigma'(r)^2&0\\
0&c(\sigma(r))(\sigma(r)/r)^2I
\end{pmatrix}.
\end{equation}
If we construct $\rho$ so that
\begin{equation}
\label{eq:cond1}
a(\sigma(r))\sigma'(r)^2=c(\sigma(r))(\sigma(r)/r)^2,
\end{equation}
the metric~\eqref{eq:metric4} is a multiple of the identity matrix (conformally Euclidean) and
\begin{equation}
%\label{eq:}
(\bar B(0,1)\setminus B(0,\tau),(G^{-1}_\rho)^*g)
\end{equation}
is a manifold of the desired form.
We will see that $\rho\in C^{k+1}$, and this shows the regularity claim.

For convenience, we change variable from $r$ to $s=\sigma(r)$.
Condition~\eqref{eq:cond1} now becomes
\begin{equation}
%\label{eq:}
\Der{s}\log\rho(s)
=
\frac{1}{s}\sqrt{\frac{a(s)}{c(s)}}.
\end{equation}
We thus choose
\begin{equation}
\label{eq:rho-set}
\rho(s)
=
\exp\left(
\int_1^s\frac{1}{t}\sqrt{\frac{a(t)}{c(t)}}\der t
\right).
\end{equation}
Since the integrand is strictly positive and $C^k$, the function $\rho$ is a strictly increasing
$C^{k+1}$ function as claimed.
We also claimed earlier that $\rho$ satisfies $\rho(1)=1$ and $\rho(R)>0$, and
these properties can be read in the representation~\eqref{eq:rho-set}.
\end{proof}

\begin{remark}
The diffeomorphism of proposition~\ref{prop:rce} is in fact radial if $n\geq3$.
It will also be necessarily radial in two dimensions if the metric is invariant under the action of $SO(2)$, but also that of $O(2)$.
\end{remark}

\section{Some exotic spherically symmetric geometries}
\label{app:edge-cases}

Our results assumed several geometric hypotheses.
In this appendix we explore some problematic spherically symmetric geometries which are ruled out by our assumptions.

First, recall that countability of the length spectrum was proven in lemma~\ref{lma:lsp-countable} under the Herglotz and countable conjugacy conditions.
It is not known whether the length spectrum can be uncountable without these assumptions.

Let us then see concrete examples where our assumptions are violated and it leads to problematic behavior:

\begin{example}
If the derivative $\frac{\der}{\der r}\left(\frac{r}{c(r)}\right)$
vanishes in an open set of radii, then that part of the manifold is
isometric to a cylinder $S^2(0,a)\times(0,b)$ for some $a,b>0$. The
great circles of $S^2$ with the second variable constant are periodic
geodesics, and they all have the same length $T=2\pi a$. The dimension of the
corresponding fixed point set $C_T$ is $3$. This manifold is trapping
and the periodic orbits in question do not reach the boundary.
The Herglotz condition is violated.
\end{example}

\begin{example}
Consider the closed hemisphere of $S^3$ or any other $S^n$, $n\geq2$.
Now all broken rays are periodic, and all primitive periods are
$2\pi$. The primitive length spectrum is degenerated into a single
point. The Herglotz condition fails at the boundary (but only there),
and the endpoints of \emph{all} maximal geodesics are conjugate. The
fixed point set has full dimension; for $n=3$ we have
$\dim(C_{2\pi})=5$. This manifold is non-trapping.
\end{example}

\begin{example}
Consider the closed hemisphere $H\subset S^2$. In polar coordinates we
can identify $H$ near the boundary (equator) with
$(R,1]\times[0,2\pi]$, where the angles $0$ and $2\pi$ are identified
in the obvious way. For any $L\in(0,2\pi)$, we can take the
submanifold $(R,1]\times[0,L]$ and identify angles $0$ and $L$. If
$L\notin\pi\Q$, then no broken ray near the boundary is
periodic. By angular rescaling we may write this as a rotation
invariant metric near the boundary of the closed disc $\bar D$ and
continue it to a rotation invariant metric in the whole disc. Now
there is an open set of points on the sphere bundle containing no
periodic orbits, and this open set can be made large. Only rays going
close enough to the center of the disc will contribute to the length
spectrum. This manifold is also non-trapping, the Herglotz condition
fails at the boundary, and the countable conjugacy condition is
violated.
\end{example}

\bibliography{poisson_summation}

\begin{thebibliography}{10}

\bibitem{Marsden}
Ralph Abraham and Jerrold~E. Marsden.
\newblock {\em Foundations of mechanics}.
\newblock Benjamin/Cummings Publishing Co., Inc., Advanced Book Program,
  Reading, Mass., 1978.
\newblock Second edition, revised and enlarged, With the assistance of Tudor
  Ra\c tiu and Richard Cushman.

\bibitem{Uribe95}
R.~Brummelhuis, T.~Paul, and A.~Uribe.
\newblock Spectral estimates around a critical level.
\newblock {\em Duke Math. J.}, 78(3):477--530, 1995.

\bibitem{Cassanas06}
Roch Cassanas.
\newblock Reduced {G}utzwiller formula with symmetry: case of a {L}ie group.
\newblock {\em J. Math. Pures Appl. (9)}, 85(6):719--742, 2006.

\bibitem{Creagh91}
Stephen~C. Creagh and Robert~G. Littlejohn.
\newblock Semiclassical trace formulas in the presence of continuous
  symmetries.
\newblock {\em Phys. Rev. A (3)}, 44(2):836--850, 1991.

\bibitem{Creagh92}
Stephen~C. Creagh and Robert~G. Littlejohn.
\newblock Semiclassical trace formulae for systems with nonabelian symmetry.
\newblock {\em J. Phys. A}, 25(6):1643--1669, 1992.

\bibitem{DahlTromp}
F.A. Dahlen and Jeroen Tromp.
\newblock {\em {Theoretical global seismology}}.
\newblock Princeton university press, 1998.

\bibitem{dHI:radial-xrt}
Maarten {de Hoop} and Joonas Ilmavirta.
\newblock {Abel transforms with low regularity with applications to X-ray
  tomography on spherically symmetric manifolds}.
\newblock 2017.
\newblock http://arxiv.org/abs/1702.07625, arXiv:1702.07625.

\bibitem{DG75}
J.~J. Duistermaat and V.~W. Guillemin.
\newblock The spectrum of positive elliptic operators and periodic
  bicharacteristics.
\newblock {\em Invent. Math.}, 29(1):39--79, 1975.

\bibitem{Gornet}
Ruth Gornet.
\newblock Riemannian nilmanifolds, the wave trace, and the length spectrum.
\newblock {\em Comm. Anal. Geom.}, 16(1):27--89, 2008.

\bibitem{Mel79}
Victor Guillemin and Richard Melrose.
\newblock The {P}oisson summation formula for manifolds with boundary.
\newblock {\em Adv. in Math.}, 32(3):204--232, 1979.

\bibitem{H:kinematic}
Gustav Herglotz.
\newblock {\"U}ber die {E}lastizit{\"a}t der {E}rde bei {B}er{\"u}cksichtigung
  ihrer variablen {D}ichte.
\newblock {\em Zeitschr. f{\"u}r Math. Phys.}, 52:275--299, 1905.

\bibitem{IS:brt-pde-1obst}
Joonas Ilmavirta and Mikko Salo.
\newblock {Broken ray transform on a Riemann surface with a convex obstacle}.
\newblock {\em Communications in Analysis and Geometry}, 24(2):379--408, 2016.

\bibitem{Langer}
Rudolph~E. Langer.
\newblock The asymptotic solutions of ordinary linear differential equations of
  the second order, with special reference to a turning point.
\newblock {\em Trans. Amer. Math. Soc.}, 67:461--490, 1949.

\bibitem{Anasov14}
Gabriel~P. Paternain, Mikko Salo, and Gunther Uhlmann.
\newblock Spectral rigidity and invariant distributions on {A}nosov surfaces.
\newblock {\em J. Differential Geom.}, 98(1):147--181, 2014.

\bibitem{Beurling15}
Gabriel~P. Paternain, Mikko Salo, and Gunther Uhlmann.
\newblock Invariant distributions, {B}eurling transforms and tensor tomography
  in higher dimensions.
\newblock {\em Math. Ann.}, 363(1-2):305--362, 2015.

\bibitem{Sharaf97}
V.~A. Sharafutdinov.
\newblock Integral geometry of a tensor field on a surface of revolution.
\newblock {\em Sibirsk. Mat. Zh.}, 38(3):697--714, iv, 1997.

\bibitem{UhlSharaf}
Vladimir Sharafutdinov and Gunther Uhlmann.
\newblock On deformation boundary rigidity and spectral rigidity of
  {R}iemannian surfaces with no focal points.
\newblock {\em J. Differential Geom.}, 56(1):93--110, 2000.

\bibitem{WZ:kinematic}
E.~Wiechert and K.~Zoeppritz.
\newblock {\"U}ber {E}rdbebenwellen.
\newblock {\em Nachr. Koenigl. Geselschaft Wiss, Goettingen}, 4:415--549, 1907.

\end{thebibliography}
\bibliographystyle{plain}

\end{document}